\newtheorem{thm}{Theorem}[section]
\newtheorem{lem}[thm]{Lemma}
\newtheorem{pro}[thm]{Proposition}
\newtheorem{cor}[thm]{Corollary}
\theoremstyle{definition}
\theoremstyle{remark}
\newtheorem{rem}[thm]{Remark}
\newcommand{\R}{\mathbb{R}}
\newcommand{\N}{\mathbb{N}}
\newcommand{\C}{\mathbb{C}}
\newcommand{\cA}{\mathcal{A}}
\newcommand{\cC}{\mathcal{C}}
\newcommand{\cD}{\mathcal{D}}
\newcommand{\cH}{\mathcal{H}}
\newcommand{\cP}{\mathcal{P}}
\newcommand{\al}{\alpha}
\newcommand{\ga}{\gamma}
\newcommand{\Ga}{\Gamma}
\newcommand{\de}{\delta}
\newcommand{\De}{\Delta}
\newcommand{\ep}{\varepsilon}
\newcommand{\om}{\omega}
\newcommand{\si}{\sigma}
\newcommand{\la}{\lambda}
\newcommand{\La}{\Lambda}
\renewcommand{\phi}{\varphi}
\newcommand{\dist}{\operatorname{dist}}
\newcommand{\diam}{\operatorname{diam}}
\newcommand{\Lip}{\operatorname{Lip}}
\newcommand{\tr}{\operatorname{Tr}}
\newcommand{\dom}{\operatorname{dom}}
\newcommand{\dens}{\operatorname{dens}}
\newcommand{\loc}{\operatorname{loc}}
\newcommand{\dil}{\operatorname{dil}}
\newcommand{\supp}{\operatorname{supp}}
\newcommand{\mes}{\operatorname{mes}}
\newcommand{\es}{\emptyset}
\newcommand{\set}[2]{\{#1:\,\text{#2}\}}
\newcommand{\sm}{\setminus}
\newcommand{\sub}{\subset}
\newcommand{\ov}{\overline}
\newcommand{\wt}{\widetilde}
\begin{document}

\title{Spectral geometries on a compact metric space}
\author{Sergei Buyalo\footnote{This work is supported by RFBR Grant
99-01-00104 and by the Program of the Presidium of the Russian
Academy of Sciences №01 'Fundamental Mathematics and its Applications' under
Grant PRAS-18-01}}

\date{}
\maketitle

 \begin{abstract} The notion of a spectral geometry on a compact metric
space
$X$
is introduced. This notion serves as a discrete approximation of
$X$
motivated by the notion of a spectral triple from noncommutative geometry.
A set of axioms charaterizing spectral geometries is given. Bounded deformations
of spectral geometries are studied and the relationship between the dimension of a
spectral geometry and more traditional dimensions of metric spaces is investigated.
 \end{abstract}

\section{Introduction}

The notion of a spectral geometry on a compact metric space
$X$
is introduced in \cite{Bu} as a tool of discrete approximation of
$X$
motivated by the notion of a spectral triple from noncommutative geometry,
see \cite{Co}. Each spectral geometry
$M$
on
$X$
defines a Connes metric
$d_M$
on
$X$.
In sect.~\ref{sect:axioms_spectral}, we study relations between
$d_M$
and the initial metric 
$d$
of
$X$,
and also give a set of simple axioms which charaterize spectral geometries.
In particular, we prove the existence of spectral geometries 
$M$
on
$X$
such that identity map
$(X,d)\to (X,d_M)$
is a bi-Lipshitz homeomorphism with the Lipshitz constant arbitrarily close to 1.

Spectral geometries seem to be a convenient tool for construction of metrics with
prescribed properties. A corresponding construction called a {\em bounded deformation}
of a spectral geometry is considered in sect.~\ref{sect:bounded_deformations}. 
There, we study relations between different types of convergence on the space 
of deformations and topologies on the space of metrics.

In the last section~\ref{sect:dimension_spectral_geometry}, we study relations 
between the dimension of a spectral geometry
$M$
on
$X$
introduced in \cite{Bu} and traditional notions such as the Hausdorff dimension,
the upper and lower box-counting dimensions, the Mikowsky-Bouligand dimension. 

\section{Axioms of spectral geometries}
\label{sect:axioms_spectral}

\subsection{The main construction}
\label{subsect:main_construction}

Assume a set 
$B\sub X^2\sm\De$, $\De=\set{(x,x)}{$x\in X$}$ 
is the diagonal, possesses the following properties

\begin{itemize}
 \item [(a)] $B$
is symmetric, i.e., 
$(y,y')\in B\Leftrightarrow (y',y)\in B$;
 \item [(b)] the set 
$B_t=\set{(y,y')\in B}{$|yy'|\ge t$}$
is finite for each
$t>0$.
\end{itemize}

Then we define a spectral triple
$M(B)=\{\cA,\cH,ds\}$
as follows. The Hilbert space
$\cH$
consists of functions
$\xi:B\to\C$
satisfying the condition
$$\|\xi\|^2=\sum_{(y,y')\in B}|\xi(y,y')|^2<\infty.$$
The algebra
$\cA=C(X)$
of all continuous functions
$X\to\C$
is represented by the multiplication operators
$$(\pi(a)(\xi)(y,y')=a(y)\xi(y,y')$$
for all
$a\in\cA$, $\xi\in\cH$, $(y,y')\in B$.
Clearly, this represention is involutive,
$\pi(a^\ast)=\pi(a)^\ast$,
and
$$\|\pi(a)\|=\sup_{y\in Y}|a(y)|\le\max_{x\in X}|a(x)|,$$
where
$Y$
is the projection of 
$B$
on
$X$
(by (a) it is the same whether one takes the projection on the first or on the 
second factor).

The unit length operator
$ds$
is defined by
$$ds(\xi)(y,y')=|yy'|\xi(y',y).$$
This operator is injective and self-adjoint. Its eigenvalues are
$\pm|yy'|$, $(y,y')\in B$.
It follows from (b) that 
$ds$
is compact.

The operator
$\cD=ds^{-1}$
is defined on
$$\dom\cD=\set{\xi\in\cH}{$\sum_{(y,y')\in B}\|\xi(y,y')\|^2/|yy'|^2<\infty$}$$
and called the {\em Dirac} operator. The set 
$\dom\cD$
contains all functions 
$\xi\in\cH$
with finite support and whence is dense in
$\cH$.
Furthermore,
$\dom\cD=\dom\cD^\ast$
and
$\cD=\cD^\ast$.
Thus
$\cD$
is an unbounded self-adjoint operator with discrete spectrum
$$\set{\pm|yy'|^{-1}}{$(y,y')\in B$}\sub\R.$$
The set 
$\dom\cD$
is invariant under every
$\pi(a)$, $a\in\cA$,
and we have 
$$([\cD,\pi(a)]\xi)(y,y')=\frac{a(y')-a(y)}{|yy'|}\xi(y',y).$$
Then
$$\|[\cD,\pi(a)]\|=\sup_{(y,y')\in B}\frac{|a(y)-a(y')|}{|yy'|}\le\Lip(a).$$
Thus the operator
$[\cD,\pi(a)]$
can be extended to a bounded operator on
$\cH$.
Now the formula
$$|xx'|_M=\sup\set{|a(x)-a(x')|}{$\|[\cD,\pi(a)]\|\le 1$}$$
defines the Connes metric
$(x,x')\mapsto|xx'|_M$
on
$X$.

\subsection{Density of a spectral triple}
\label{subsect:density_sptriple}

We use the metric
$|zz'|=\max\{|xx'|,|yy'|\}$
on
$X^2=X\times X$,
where
$z=(x,y)$, $z'=(x',y')$.
We denote by
$|z|=|xy|$
the distance between
$x$, $y\in X$.
The {\em density} of the subset
$B\sub X^2$
is defined by
$$\dens(B)=\inf_{z\in X^2\sm\De}\sup_{b\in B}|z|/|zb|.$$
The number
$\dens(M):=\dens(B)$
is called the {\em density} of the spectral triple
$M(B)$.
If the set 
$X$
is infinite, then it follows from (b) that 
$\dens(M)<\infty$.
If
$\dens(M)>0$,
then for each
$\ep\in(0,\dens(M))$
and 
$(x,x')\in X^2\sm\De$
there is a point
$(y,y')\in B$
with
$$|xy|,|x'y'|\le\al|xx'|,$$
where
$\al=1/(\dens(M)-\ep)$.
Thus the set 
$Y$,
the projection of
$B$
on
$X$,
is dense in the set of nonisolated points of
$X$.
If
$\dens(M)>1$,
then
$Y$
is everywhere dense in
$X$,
in particular,
$$\|\pi(a)\|=\max_{x\in X}|a(x)|$$
for each function 
$a\in\cA$.
In the case
$\dens(M)>2$,
one can take 
$\al=1/\dens(M)$
in the inequality above. This easily follows from (b).

\begin{lem}\label{lem:sup_est_below} Assume that the set 
$B\sub X^2\sm\De$
has the density
$D=\dens(B)>2$.
Then for every function
$a\in\cA$
we have
$$\sup_{(y,y')\in B}\frac{|a(y)-a(y')|}{|yy'|}\ge\frac{D-2}{D+2}\cdot\Lip(a).$$
\end{lem}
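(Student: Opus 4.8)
The plan is to exploit the geometric consequence of the hypothesis $D>2$ recorded just before the lemma: for every pair $(x,x')\in X^2\sm\De$ one can find $(y,y')\in B$ with $|xy|,|x'y'|\le |xx'|/D$. Here the choice $\al=1/D$ is legitimate because, by property (b), the supremum defining the density is actually attained once $D>2$; indeed any $(y,y')$ competing for that supremum must satisfy $|zb|\le|xx'|/D<|xx'|/2$, whence $|yy'|\ge|xx'|-2|zb|\ge|xx'|(D-2)/D>0$, so only finitely many candidates exist and the infimum $\inf_b|zb|$ is a minimum. Fixing $a\in\cA$, I would first dispose of the trivial case: if $\Lip(a)=0$ the function is constant and both sides vanish, so I may assume $L:=\Lip(a)\in(0,\infty)$.

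Next, for an arbitrary $\ep>0$ I would choose a pair $(x,x')$ with $x\ne x'$ that nearly realizes the Lipschitz constant, namely $|a(x)-a(x')|\ge(L-\ep)|xx'|$; this $\ep$-approximation is needed because $\Lip(a)$ is a supremum and may not be attained. Applying the geometric fact to this pair produces $(y,y')\in B$ with $|xy|,|x'y'|\le|xx'|/D$. The two estimates that drive the proof are then a triangle-inequality lower bound on the numerator and a triangle-inequality upper bound on the denominator: using the Lipschitz property of $a$ on the short edges $xy$ and $x'y'$,
$$|a(y)-a(y')|\ge|a(x)-a(x')|-L|xy|-L|x'y'|\ge\Big((L-\ep)-\tfrac{2L}{D}\Big)|xx'|,$$
while
$$|yy'|\le|xy|+|xx'|+|x'y'|\le\tfrac{D+2}{D}\,|xx'|.$$

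Dividing these two inequalities (the denominator is positive since $(y,y')\in B\sub X^2\sm\De$) gives
$$\frac{|a(y)-a(y')|}{|yy'|}\ge\frac{D(L-\ep)-2L}{D+2},$$
so the supremum over $B$ is at least this quantity for every $\ep>0$. Letting $\ep\to0$ yields the claimed bound $\frac{D-2}{D+2}\,L$. I do not expect a genuine obstacle here: the argument is a direct estimate once the pair $(y,y')$ is produced, and the only points requiring care are the justification that $\al=1/D$ may be used (the attainment argument via (b) above) and the $\ep$-approximation of a possibly non-attained Lipschitz constant. The sharp constant $\frac{D-2}{D+2}$ arises exactly as the ratio of the factor $(D-2)/D$ lost in the numerator to the factor $(D+2)/D$ gained in the denominator.
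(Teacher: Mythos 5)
Your proof is correct and follows essentially the same route as the paper's: pick a pair $(x,x')$ nearly realizing $\Lip(a)$, approximate it by $(y,y')\in B$ using the density hypothesis, and bound the numerator from below and the denominator from above by the triangle inequality, letting $\ep\to 0$ at the end. The only cosmetic difference is that you justify taking $\al=1/D$ outright via the finiteness of $B_t$ (exactly the remark the paper makes after defining density, ``this easily follows from (b)''), whereas the paper's proof works with $\al=1/(D-\ep)$ and absorbs the approximation into the same $\ep$; the constant $(D-2)/(D+2)$ arises identically in both.
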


\begin{proof} Fix
$\ep\in(0,D-2)$.
Then
$\al:=1/(D-\ep)<1/2$,
and for each point
$(x,x')\in X^2\sm\De$
there is a point
$(y,y')\in B$
with
$|xy|$, $|x'y'|\le\al|xx'|$.
In particular, this is true for 
$(x,x')\in X^2\sm\De$
with
$|a(x)-a(x')|/|xx'|\ge L-\ep$,
where
$L=\Lip(a)$.
Then
$|yy'|\le(1+2\al)|xx'|$,
and we have

\begin{align*}
 |a(x)-a(x')|/|xx'|&\le(|a(y)-a(y')|/|yy'|)(1+2\al)+L(|xy|+|x'y'|)/|xx'|\\
                   &\le(1+2\al)|a(y)-a(y')|/|yy'|+2\al L,
\end{align*}
which implies
$|a(y)-a(y')|/|yy'|\ge L\cdot(1-2\al)/(1+2\al)-\ep/(1+2\al)$.
The claim follows from that the 
$\ep>0$
was chosen arbitrary.
\end{proof}

\begin{pro}\label{pro:est_two_side} Let
$M=M(B)$
be a spectral triple of density
$D=\dens(M)>2$
on a compact metric space 
$X$.
Then for every
$x$, $x'\in X$
we have
$$|xx'|\le|xx'|_M\le\frac{D+2}{D-2}|xx'|.$$
Moreover, for each
$(y,y')\in B$
we have
$|yy'|=|yy'|_M$,
where
$|\ |_M$
is the Connes metric associated with
$M$.
\end{pro}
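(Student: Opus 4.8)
The plan is to show that the Connes seminorm $\|[\cD,\pi(a)]\|$ is comparable to the Lipschitz constant $\Lip(a)$ with the explicit constants furnished by Lemma~\ref{lem:sup_est_below}, and then to translate this comparison into the two-sided estimate for the metric. Write $L(a):=\|[\cD,\pi(a)]\|=\sup_{(y,y')\in B}|a(y)-a(y')|/|yy'|$ and call $a\in\cA$ \emph{admissible} when $L(a)\le 1$, so that $|xx'|_M=\sup\{|a(x)-a(x')|:\ a\ \text{admissible}\}$ by definition.

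For the lower bound $|xx'|\le|xx'|_M$ I would test the definition with a distance function. Fixing $x\in X$ and setting $a(z)=|xz|$, the function $a$ is continuous, hence lies in $\cA$, and is $1$-Lipschitz; by the inequality $L(a)\le\Lip(a)$ established in the construction it is therefore admissible. Since $a(x)=0$ and $a(x')=|xx'|$, this test function already gives $|xx'|_M\ge|a(x)-a(x')|=|xx'|$. Note that this direction needs neither the density hypothesis nor the preceding lemma.

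For the upper bound I would feed Lemma~\ref{lem:sup_est_below} into the supremum. For any admissible $a$ the lemma yields $1\ge L(a)\ge\frac{D-2}{D+2}\Lip(a)$, so $\Lip(a)\le\frac{D+2}{D-2}$; this is the only place the hypothesis $D>2$ enters, and it is exactly what keeps the constant finite and positive. Hence $|a(x)-a(x')|\le\Lip(a)\,|xx'|\le\frac{D+2}{D-2}|xx'|$, and taking the supremum over admissible $a$ gives the asserted upper bound.

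For the final claim that $|yy'|_M=|yy'|$ when $(y,y')\in B$, one inequality is the lower bound already proved (here $y\ne y'$ since $B\sub X^2\sm\De$). For the reverse I would use that the pair $(y,y')$ is itself one of the terms in the supremum defining $L$: every admissible $a$ satisfies $|a(y)-a(y')|/|yy'|\le L(a)\le 1$, i.e.\ $|a(y)-a(y')|\le|yy'|$, and passing to the supremum gives $|yy'|_M\le|yy'|$. Combined with the lower bound this forces equality. I do not expect a real obstacle here, since everything reduces to the comparison in Lemma~\ref{lem:sup_est_below}; the only point deserving care is the verification that the distance function is an admissible test function, which rests on the elementary bound $\|[\cD,\pi(a)]\|\le\Lip(a)$ recorded in the construction of $M(B)$.
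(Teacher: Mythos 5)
Your proposal is correct and follows essentially the same route as the paper's own proof: the lower bound via the $1$-Lipschitz distance function $a_x(z)=|xz|$ together with the bound $\|[\cD,\pi(a)]\|\le\Lip(a)$, the upper bound by applying Lemma~\ref{lem:sup_est_below} to get $\Lip(a)\le\frac{D+2}{D-2}$ for every admissible $a$, and the equality $|yy'|_M=|yy'|$ on $B$ by evaluating the commutator norm on the pair $(y,y')$ itself. Your only addition is to make explicit that the lower bound of the first part supplies the inequality $|yy'|_M\ge|yy'|$, which the paper leaves implicit.
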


\begin{proof} If
$a\in\cA$
and
$\Lip(a)\le 1$,
then
$\|[\cD,\pi(a)]\|\le 1$,
where
$\cD=ds^{-1}$.
Choosing
$a=a_x:X\to\R$, $a_x(x')=|xx'|$,
we obtain
$|xx'|_M\ge|xx'|$.
By Lemma~\ref{lem:sup_est_below}
we have
$\Lip(a)\le(D+2)/(D-2)$.
In the case
$\|[\cD,\pi(a)]\|\le 1$
this gives 
$$|xx'|_M\le\frac{D+2}{D-2}|xx'|.$$
If
$(y,y')\in B$
and
$\|[\cD,\pi(a)]\|\le 1$,
then
$|a(y)-a(y')|\le\|[\cD,\pi(a)]\|\cdot|yy'|\le|yy'|$,
whence
$|yy'|_M\le|yy'|$.
\end{proof}

\begin{cor}\label{cor:reapeted_connes_metric} Let
$M=M(B)$
be a spectral triple with density
$D=\dens(M)>2$
on a compact metric space
$X$.
Then
$M$
is a spectral triple
$M'$
with density
$D'=\dens(M')$
such that
$$D\frac{D-2}{D+2}\le D'\le D\frac{D+2}{D-2}$$
on the compact metric space
$(X,\dist_M)$,
where
$\dist_M$
is the Connes metric associated with
$M$.
Furthermore,
$\dist_{M'}=\dist_M$.
\end{cor}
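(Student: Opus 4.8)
The plan is to observe that the corollary is essentially a bookkeeping exercise built on Proposition~\ref{pro:est_two_side}, applied to the \emph{same} set $B$ now regarded as a subset of $(X,\dist_M)^2\sm\De$. Write $\la=(D+2)/(D-2)$, so that Proposition~\ref{pro:est_two_side} provides the bi-Lipschitz sandwich $|xx'|\le|xx'|_M\le\la|xx'|$ for all $x,x'\in X$; in particular the identity map $(X,d)\to(X,\dist_M)$ is a homeomorphism and $(X,\dist_M)$ is again compact. First I would check that $B$ defines a spectral triple $M'=M(B)$ over $(X,\dist_M)$: axiom (a) is a property of $B$ alone and is untouched by the change of metric, while for axiom (b) the upper bound $|yy'|_M\le\la|yy'|$ gives the inclusion $\{(y,y')\in B:|yy'|_M\ge t\}\sub\{(y,y')\in B:|yy'|\ge t/\la\}$, whose right-hand side is finite by axiom (b) for $M$. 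Hence $M'$ is a well-defined spectral triple and $D'=\dens(M')$ makes sense.

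For the density bounds I would transport the sandwich to the product metric. For $z=(x,y)\in X^2\sm\De$ and $b=(u,v)\in B$ the quantity entering the density is $|z|/|zb|$, with $|z|=|xy|$ and $|zb|=\max\{|xu|,|yv|\}$; write $r(z,b)=|z|/|zb|$ and $r_M(z,b)=|z|_M/|zb|_M$ for the corresponding ratio in $\dist_M$. Since numerator and denominator are built from distances in $X$, the inequalities $|\cdot|\le|\cdot|_M\le\la|\cdot|$ yield $\tfrac1\la\,r(z,b)\le r_M(z,b)\le\la\,r(z,b)$ for all $z$ and $b$. Taking $\sup_{b\in B}$ and then $\inf_{z}$ preserves these two-sided bounds, because multiplication by the positive constants $\la^{\pm1}$ commutes with both $\sup$ and $\inf$; this gives
$$\frac{D}{\la}\le D'\le\la D,$$
which is exactly $D\frac{D-2}{D+2}\le D'\le D\frac{D+2}{D-2}$.

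The cleanest and most important step is the identity $\dist_{M'}=\dist_M$, and here I would invoke the ``moreover'' clause of Proposition~\ref{pro:est_two_side}: for every $(y,y')\in B$ one has $|yy'|_M=|yy'|$. Thus the unit length operator, which multiplies by $|yy'|$ on the pair $(y,y')$, is literally the same whether computed from $d$ or from $\dist_M$; consequently the Dirac operators $\cD$ and $\cD'$ coincide, and so do the commutator norms, since $\|[\cD',\pi(a)]\|=\sup_{(y,y')\in B}|a(y)-a(y')|/|yy'|_M=\|[\cD,\pi(a)]\|$ for every $a\in\cA$. Because the Connes metric is defined purely through the constraint $\|[\cD,\pi(a)]\|\le1$ together with the metric-independent quantities $|a(x)-a(x')|$, the two constructions optimize the same functional over the same constraint set, whence $\dist_{M'}=\dist_M$.

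I do not expect a genuine obstacle: the proposition already supplies both the bi-Lipschitz control that drives the density estimate and the exact equality on $B$ that drives the equality of Connes metrics. The only points requiring care are the verification of axiom (b) under the change of metric and the interchange of $\sup$ and $\inf$ in the density estimate, both of which are routine.
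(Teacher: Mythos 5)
Your proposal is correct and follows essentially the same route as the paper: the same two-sided sandwich $\tfrac{D-2}{D+2}\cdot\tfrac{|z|}{|zb|}\le\tfrac{|z|_M}{|zb|_M}\le\tfrac{D+2}{D-2}\cdot\tfrac{|z|}{|zb|}$ drives the density estimate, and the equality $|yy'|_M=|yy'|$ on $B$ from Proposition~\ref{pro:est_two_side} gives coincidence of the unit length and Dirac operators, hence $\dist_{M'}=\dist_M$. Your explicit verification of axioms (a) and (b) for $B$ over $(X,\dist_M)$ is a detail the paper leaves implicit, but it is not a different argument.
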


\begin{proof} Using Proposition~\ref{pro:est_two_side}, we obtain
$$\frac{D-2}{D+2}\frac{|z|}{|zb|}\le\frac{|z|_M}{|zb|_M}\le\frac{D+2}{D-2}\frac{|z|}{|zb|}$$ 
for each
$z\in X^2\sm\De$, $b\in B$.
This immediately implies the required estimate for 
$\dens(M')$.
Since
$|yy'|_M=|yy'|$
for every
$(y,y')\in B$, 
the unit length and Dirac operators for the triples
$M$
and
$M'$
coincide. Hence
$\dist_{M'}=\dist_M$.
\end{proof}

The property
$\dist_{M'}=\dist_M$
for Connes metrics is similar to the well known property of intrinsic metrics
to be invariant under the standard procedure of getting an intrinsic metric from
a given one.

\subsection{Local density}
\label{subsect:local_dens}

Let
$\dens(B,U_t)$
be the density of the set 
$B\sub X^2$
in the neighborhood
$U_t=\set{z\in X^2}{$|z|\le t$}$
of the diagonal
$\De$.
This function does not increase in
$t$,
and thus is there the limit
$$\dens_{\loc}(B)=\lim_{t\to 0}\dens(B,U_t),$$
which is called the {\em local density} of
$B$.
Clearly,
$\dens_{\loc}(B)\ge\dens(B)$.
Thus in contrast to the density
$\dens(M)$,
the local density
$\dens_{\loc}(M)=\dens_{\loc}(B)$
of a spectral triple
$M=M(B)$
might be infinite even of an infinite
$X$
(see sect.~\ref{subsect:exist_spgeom_dens}). 

If
$X$
is a space with intrinsic metric, then for each Lipshitz function
$a$
on
$X$
we have
$$\Lip(a)=\sup_{x\in X}\dil_x(a),$$
where
$\dil_x(a)=\lim_{\ep\to 0}\Lip(a|D_\ep(x))$
is the dilatation of
$a$
at the point
$x$, $D_\ep(x)=\set{x'\in X}{$|xx'|\le\ep$}$.
In that case, Lemma~\ref{lem:sup_est_below} and Proposition~\ref{pro:est_two_side}
remain true for 
$D=\dens_{\loc}(M)$.
In particular, we have

\begin{pro}\label{pro:intrinsic_local} Let
$M=M(B)$
be a spectral triple of infinite local density on a space
$X$
with intrinsic metric. Then its Connes metric coincides with the metric of
$X$,
$$|xx'|_M=|xx'|$$
for every
$x$, $x'\in X$.
\qed
\end{pro}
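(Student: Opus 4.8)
The plan is to adapt the proofs of Lemma~\ref{lem:sup_est_below} and Proposition~\ref{pro:est_two_side} to the infinite local density case, using the new hypothesis that the metric is intrinsic. The key observation is that the estimate $|xx'|\le|xx'|_M$ in Proposition~\ref{pro:est_two_side} holds for any density (it only uses that $\Lip(a_x)\le 1$ implies $\|[\cD,\pi(a)]\|\le 1$, which is always true), so the content to establish is the reverse inequality $|xx'|_M\le|xx'|$. Since both inequalities together force equality, it suffices to show that $\|[\cD,\pi(a)]\|\le 1$ implies $\Lip(a)\le 1$ when the local density is infinite.

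First I would recall how the intrinsic hypothesis enters. For an intrinsic metric one has $\Lip(a)=\sup_{x\in X}\dil_x(a)$, so controlling the \emph{local} Lipschitz behavior of $a$ near each point suffices to control $\Lip(a)$ globally. This is precisely what converts a \emph{local} density condition into a \emph{global} metric comparison. The excerpt already asserts that under this hypothesis Lemma~\ref{lem:sup_est_below} and Proposition~\ref{pro:est_two_side} hold with $D=\dens_{\loc}(M)$, so the strategy is to take these infinite-density versions and pass to the limit $D\to\infty$.

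Next I would carry out the limiting argument explicitly. With $D=\dens_{\loc}(M)=\infty$, the factor $(D-2)/(D+2)$ in the conclusion of Lemma~\ref{lem:sup_est_below} tends to $1$, so for every $a\in\cA$ we obtain
$$\sup_{(y,y')\in B}\frac{|a(y)-a(y')|}{|yy'|}\ge\Lip(a).$$
Feeding this into the proof of Proposition~\ref{pro:est_two_side}: if $\|[\cD,\pi(a)]\|\le 1$, then the left-hand supremum is at most $1$, forcing $\Lip(a)\le 1$. Choosing the distance function $a=a_x$ with $a_x(x')=|xx'|$, which is $1$-Lipschitz and hence admissible, yields $|xx'|_M\ge|xx'|$; conversely, admissibility of $a$ now forces $\Lip(a)\le 1$, so $|a(x)-a(x')|\le|xx'|$ for all admissible $a$, giving $|xx'|_M\le|xx'|$. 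The two bounds together give the claimed equality.

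The main obstacle is making the infinite-density versions of the lemma and proposition rigorous, since the excerpt states them without full proof. Concretely, one must verify that the local density bound $|xy|,|x'y'|\le\al|xx'|$ near the diagonal, combined with $\Lip(a)=\sup_x\dil_x(a)$, suffices to run the estimate in Lemma~\ref{lem:sup_est_below} even though that lemma's original argument used the \emph{global} density to produce an approximating pair $(y,y')\in B$ for \emph{every} $(x,x')$. The resolution is that for the distance functions $a_x$ and for the supremum defining $\Lip(a)$ one may restrict attention to pairs $(x,x')$ that are arbitrarily close together, where $\dil_x(a)$ governs the ratio and the local density guarantees a suitable $(y,y')\in B$ nearby; letting $t\to 0$ in $\dens(B,U_t)$ then delivers the factor tending to $1$. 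Once this localization is justified, the rest is the routine limit computation sketched above.
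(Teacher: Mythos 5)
Your proposal is correct and follows essentially the same route as the paper: the paper's \qed rests entirely on the preceding remark that for intrinsic metrics $\Lip(a)=\sup_x\dil_x(a)$, so Lemma~\ref{lem:sup_est_below} and Proposition~\ref{pro:est_two_side} remain valid with $D=\dens_{\loc}(M)$, and letting $D\to\infty$ squeezes $|xx'|\le|xx'|_M\le\frac{D+2}{D-2}|xx'|$ to equality. Your final paragraph correctly supplies the one detail the paper leaves implicit, namely that the intrinsic hypothesis lets one witness $\Lip(a)$ by pairs arbitrarily close to the diagonal, where the local density furnishes the approximating pair in $B$.
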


One can slightly modify the main construction by introducing additionally
a symmetric function
$m:B\to\N$, $m(y,y')=m(y',y)$
for all
$(y,y')\in B$, 
a {\em multiplicity} function. The space
$\cH$
is then defined as the space of sections of the bundle
$p:\C^m\to B$, 
satisfying the condition
$$\|\xi\|^2=\sum_{(y,y')\in B}\|\xi(y,y'\|^2<\infty,$$
where the fiber over a point
$(y,y')\in B$
is a finite dimensional Hilbert space
$p^{-1}(y,y')\simeq\C^{m(y,y')}$.
The definitions of the represention
$\pi$
and the operator
$ds$
remain the same as above. The fact that
$ds$
is well defined follows from that the function
$m$
is symmetric. We denote by
$M(B,m)$
so modified spectral triple
$\{\cA,\cH,ds\}$.
Obviously,
$M(B)=M(B,1)$.
It is easy to see that Propositions~\ref{pro:est_two_side} and \ref{pro:intrinsic_local},
and Corollary~\ref{cor:reapeted_connes_metric} are true for triples
$M(B,m)$
as well.

\subsection{Axioms of spectral geometry}
\label{subsect:axioms_spectral_geometry}

We give here a set of axioms which leads to spectral triple of form 
$M(B,m)$.

Recall (see \cite[Chapter IV]{N}) that a spectral measure on a locally compact space
$T$
is any operator-valued function
$P$,
defined on the Borel subsets of
$T$
and having the properties

\begin{itemize}
 \item given a Borel subset
$A\sub T$,
the operator
$P(A)$
is an orthogonal projector in a fixed Hilbert space
$\cH$;
 \item given
$\xi\in\cH$,
the function
$(P(A)\xi,\xi)$
is a Borel measure on
$T$.
\end{itemize}
In particular,
$P(A_1\cup A_2)=P(A_1)+P(A_2)$
and
$P(A_1)P(A_2)=0$
for 
$A_1\cap A_2=\es$.

A {\em spectral geometry} on a compact metric space
$X$
is given by an involutive represention
$\pi$
of the algebra
$\cA$
of Lipshitz functions on
$X$
in a Hilbert space
$\cH$
and an injective compact self-adjoint operator
$ds$
on
$\cH$
such that the following conditions (i)--(iv) are satisfied  

\begin{itemize}
 \item [(i)] the represention
$\pi$
is the restriction on
$\cA$
of an involutive represention
$\wt\pi$
of the 
$C^\ast$-algebra
$\cC(X^2)$
of continuous functions on
$X^2$,
where a function
$a\in\cA$
is considered as the continuous function
$a(x,x')=a(x)$
on
$X^2$.
\end{itemize}

According to a classical result of Dunford (see \cite[Chapter IV]{N}),
the knowledge of the represention
$\wt\pi$
is equivalent to the knowledge of a spectral measure
$P$
on
$X^2$
such that
$$\wt\pi(a)=\int_{X^2}a(x,x')dP(x,x')$$
for every
$a\in\cC(X^2)$,
in particular,
$\pi(a)=\int_{X^2}a(x)dP(x,x')$
for every
$a\in\cA$.

\begin{itemize}
 \item [(ii)] $ds\circ P(A)=P(\ga(A))\circ ds$
for every Borel subset
$A\sub X^2$,
where
$\ga(x,x')=(x',x)$
for 
$(x,x')\in X^2$.
\end{itemize}

It follows from (ii) that the operator
$ds^2$
commutes with
$P$,
in particular,
$[ds^2,\pi(a)]=0$
for every
$a\in\cA$.

Let
$L_\la$
be the eigenspace of
$ds^2$
corresponding to an eigenvalue
$\la^2$, $A_\la=A_{-\la}$
the union of all open in
$X^2$
subsets
$A$
for which 
$P(A)$
is a projector on a subspace orthogonal to
$L_\la$
(i.e. lying in the orthogonal complement
$L_\la^\perp$
to
$L_\la$).
Then as easy to see (Lemma~\ref{lem:ds_eigenspace}),
$P(A_\la)$
is also a projector on a subspace orthogonal to
$L_\la$,
and
$A_\la$
is a (unique) maximal open subset with this property.

\begin{itemize}
 \item [(iii)] $P(A_\la)$
is the projector onto orthogonal complement
$L_\la^\perp$
of
$L_\la$
for each eigenvalue
$\la$
of
$ds$.
\end{itemize}

We put
$\La_\la=X^2\sm A_\la$.

\begin{itemize}
 \item [(iv)] $|yy'|=|\la|$
for every
$(y,y')\in\La_\la$.
\end{itemize}

The set 
$B=\cup_\la\La_\la$,
where
$\la$
runs over all eigenvalues of
$ds$,
is called the {\em support} of the spectral geometry
$\{\cA,\cH,ds\}$.
This terminology is justified by the fact that the closure 
$\ov B$
coincides with the support of the operator
$ds$
(see Corollary~\ref{cor:supp_spgeom}).

\begin{pro}\label{pro:spectral_triple_spgeom} Let
$X$
be a compact metric space, a subset
$B\sub X^2\sm\De$
satisfies conditions (a), (b) of the main construction,
a function
$m:B\to\N$
is symmetric. Then the spectral triple
$M(B,m)=\{\cA,\cH,ds\}$
is a spectral geometry on
$X$.
\end{pro}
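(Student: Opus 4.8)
The plan is to verify the four axioms (i)--(iv) directly for the explicitly built triple $M(B,m)=\{\cA,\cH,ds\}$; the standing requirements that $\pi$ be involutive and that $ds$ be an injective compact self-adjoint operator are already recorded in the main construction, so only (i)--(iv) remain. The key is to guess the right spectral measure $P$ on $X^2$ and read off each axiom from the coordinate description of the operators. Since $\cH$ consists of sections of the bundle over $B$, the natural candidate is the atomic measure that multiplies by characteristic functions along $B$: for a Borel set $A\sub X^2$ put
$$(P(A)\xi)(y,y')=\chi_A(y,y')\,\xi(y,y'),$$
i.e. $P(A)$ cuts a section down to the part supported on $B\cap A$.

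First I would check that $P$ is a spectral measure inducing $\pi$, which gives axiom (i). Each $P(A)$ is visibly an orthogonal projector, and for fixed $\xi\in\cH$ the set function $A\mapsto(P(A)\xi,\xi)=\sum_{(y,y')\in B\cap A}\|\xi(y,y')\|^2$ is a finite, purely atomic Borel measure on $X^2$; here one uses condition (b), which forces $B$ to be countable (a countable union of the finite sets $B_t$), so that the sum is well defined and $\sigma$-additive. Integrating against $P$ then yields $\wt\pi(f)\xi(y,y')=f(y,y')\xi(y,y')$ for $f\in\cC(X^2)$, and specializing to $f(x,x')=a(x)$ recovers $\pi(a)\xi(y,y')=a(y)\xi(y,y')$, so $\pi$ is indeed the restriction of $\wt\pi$ to $\cA$. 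Axiom (ii) is then a one-line computation: both $ds\circ P(A)$ and $P(\ga(A))\circ ds$ send $\xi$ to $|yy'|\,\chi_A(y',y)\,\xi(y',y)$, using $|yy'|=|y'y|$ and $\chi_{\ga(A)}(y,y')=\chi_A(y',y)$.

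For axioms (iii) and (iv) I would first compute $ds^2$. From the definition $(ds^2\xi)(y,y')=|yy'|^2\xi(y,y')$, so $ds^2$ is multiplication by $|yy'|^2$ and its eigenspace $L_\la$ for the eigenvalue $\la^2$ is exactly the space of sections supported on the finite set $B_\la=\set{(y,y')\in B}{$|yy'|=|\la|$}$. The range of $P(A)$, namely the sections supported on $B\cap A$, is orthogonal to $L_\la$ precisely when $A\cap B_\la=\es$; hence the maximal open set with this property is $A_\la=X^2\sm B_\la$, where I use that $B_\la$ is finite and therefore closed. Consequently $P(A_\la)$ is multiplication by $\chi_{X^2\sm B_\la}$, whose range is the space of sections supported on $B\sm B_\la$, that is, exactly $L_\la^\perp$; this is axiom (iii). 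Finally $\La_\la=X^2\sm A_\la=B_\la$, so $|yy'|=|\la|$ for every $(y,y')\in\La_\la$ by the definition of $B_\la$, which is axiom (iv); as a consistency check the resulting support $\cup_\la\La_\la=\cup_\la B_\la$ is the original set $B$.

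The computations are routine; the only points demanding care are the two structural facts supplied by the hypotheses. Condition (b) is what makes $P$ a genuine spectral measure, through countability of $B$ and finiteness of each $B_\la$, and the finiteness of $B_\la$ is exactly what guarantees it is closed, so that its complement is the maximal open set $A_\la$ appearing in (iii). Symmetry of $m$ is needed only to make $ds$ well defined, as already noted in the construction, and does not otherwise enter the verification, so the argument is identical for general $m$ and for $m\equiv 1$.
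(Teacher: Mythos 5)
Your proof is correct and follows essentially the same route as the paper: your spectral measure $P(A)$ (multiplication by $\chi_A$) is exactly the paper's atomic measure $P(A)=\sum_{(y,y')\in A\cap B}P(y,y')$, and the verification of axioms (i)--(iv) via the same computations ($ds\circ P(A)=P(\ga(A))\circ ds$, $ds^2$ acting as multiplication by $|yy'|^2$, and $\La_\la=\set{(y,y')\in B}{$|yy'|=|\la|$}$ finite by (b)) matches the paper's argument. If anything, you supply slightly more detail than the paper does, in particular the explicit identification of $A_\la=X^2\sm\La_\la$ as the maximal open set (using that $\La_\la$ is finite, hence closed) and the countability of $B$ needed for $\si$-additivity of $(P(\cdot)\xi,\xi)$.
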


\begin{proof} The represention
$\pi$
is the restriction on
$\cA$
of the represention
$\wt\pi$,
$$(\wt\pi(a)\xi)(y,y')=a(y,y')\xi(y,y'),\ (y,y')\in B,$$
of 
$C^\ast(X^2)$-algebra
in
$\cH$.

Given
$(y,y')\in B$,
we define the projector
$P(y,y')$
as
$$(P(y,y')\xi)(z,z')=\begin{cases}
                      \xi(y,y'),\ &z=y,z'=y'\\
                      0, &\textrm{otherwise},
                     \end{cases}
$$
$\xi\in\cH$.
Then the spectral measure
$P$
associated with
$\wt\pi$
is defined by
$$P(A)=\sum_{(y,y')\in A\cap B}P(y,y')$$
for every Borel
$A\sub X^2$.
It follows from the definitions of operators
$ds$
and
$P(y,y')$
that
$ds\circ P(y,y')=P(y',y)\circ ds$
for every
$(y,y')\in B$
which implies condition (b) of the spectral geometry.

The set 
$\La_\la=\set{(y,y')\in B}{$|yy'|=|\la|$}$
is finite according to (b), and 
$P(\La_\la)$
is the projector on the 
$\la^2$-eigenspace
$L_\la$
of
$ds^2$
for each eigenvalue
$\la$
of
$ds$.
Thus conditions (iii) and (iv) from definition of a spectral
geometry are fulfilled.
\end{proof}

The main result of this section is that any spectral geometry can
be obtained by using the main construction (and taking into account
the multiplicity function
$m$
of eigenvalues of the unit length operator).

\begin{thm}\label{thm:spectral_triple_spgeom} Let
$\{\cA,\cH,ds\}$
be spectral geometry on a compact metric space
$X$.
Then its support
$B\sub X^2\sm\De$
satisfies conditions (a), (b), and there exists a symmetric function
$m:B\to\N$,
for which 
$\{\cA,\cH,ds\}$
is isomorphic to the spectral geometry
$M(B,m)=\{\cA,\cH',ds'\}$
by an isometry 
$U:\cH\to\cH'$
conjugating the representions of
$\cA$
and the unit length operators
$ds$, $ds'$. 
\end{thm}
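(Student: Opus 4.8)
The plan is to reconstruct the combinatorial data $(B,m)$ intrinsically from the spectral geometry $\{\cA,\cH,ds\}$ and then build the conjugating isometry $U$ eigenspace by eigenspace. First I would verify that the support $B=\cup_\la\La_\la$ satisfies (a) and (b). Symmetry (a) is immediate from axiom (ii): since $ds\circ P(A)=P(\ga(A))\circ ds$ and $ds$ is injective, the family of open sets whose $P$-image is orthogonal to $L_\la$ is invariant under $\ga$, hence $A_\la$ is $\ga$-invariant and so is $\La_\la=X^2\sm A_\la$; thus $B$ is symmetric. For (b), note that by (iv) every point of $\La_\la$ lies at distance exactly $|\la|$ from the diagonal, so $B_t=\cup_{|\la|\ge t}\La_\la$; since $ds$ is compact its nonzero eigenvalues $\la$ have $|\la|$ bounded away from $0$ in any range $|\la|\ge t$ and each eigenspace is finite-dimensional, I would argue $B_t$ is finite. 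Here I must also confirm $B\sub X^2\sm\De$, which follows because $ds$ is injective so $0$ is not an eigenvalue and the relevant distances $|\la|$ are strictly positive.

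The heart of the proof is analyzing the spectral measure $P$ restricted to each eigenspace $L_\la$. By axiom (iii), $P(A_\la)$ projects onto $L_\la^\perp$, equivalently $P(\La_\la)$ projects onto $L_\la$. I would next show that $P$ is, on $L_\la$, supported on the finite set $\La_\la$ and decomposes as a sum of mutually orthogonal projectors $P(\{(y,y')\})$ attached to the individual points $(y,y')\in\La_\la$. The key point is that distinct points of $X^2$ can be separated by open sets, and the additivity/orthogonality of $P$ on disjoint Borel sets (stated after the definition of spectral measure) forces $P(\La_\la)=\sum_{(y,y')\in\La_\la}P(\{(y,y')\})$ with pairwise orthogonal summands. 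Then I would \emph{define} the multiplicity by $m(y,y')=\dim\bigl(\operatorname{im}P(\{(y,y')\})\bigr)$; symmetry $m(y,y')=m(y',y)$ follows from (ii), because $ds$ maps $\operatorname{im}P(\{(y,y')\})$ isometrically onto $\operatorname{im}P(\{(y',y)\})$ and $ds$ is injective, hence these two images have equal finite dimension.

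With $(B,m)$ in hand I would construct the model space $\cH'$ of the triple $M(B,m)$ and the isometry $U:\cH\to\cH'$. For each $(y,y')\in B$ choose an orthonormal basis of the finite-dimensional subspace $\operatorname{im}P(\{(y,y')\})\sub\cH$ and send it to the standard basis of the fiber $\C^{m(y,y')}$ over $(y,y')$; collecting these over all points of $B$ gives a linear map, which is an isometry because the subspaces are mutually orthogonal and, by (iii), together span $\cH$ (their orthogonal complement would be annihilated by every $P(\La_\la)$, forcing it orthogonal to all $L_\la$, hence zero since $ds$ is injective with the $L_\la$ spanning $\cH$). I then check $U$ intertwines the representations: $\pi(a)$ acts on $\operatorname{im}P(\{(y,y')\})$ as multiplication by $a(y)$ because $\pi(a)=\int a(x)\,dP$, and this matches the model action $(\pi'(a)\xi)(y,y')=a(y)\xi(y,y')$. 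Finally $U$ conjugates $ds$ into $ds'$ since both send the $(y,y')$-component to the $(y',y)$-component scaled by $|yy'|=|\la|$, by (iv) together with axiom (ii).

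The main obstacle I expect is the measure-theoretic step of the second paragraph: showing that $P$ really concentrates on the finite set $\La_\la$ within $L_\la$ and splits cleanly into point masses $P(\{(y,y')\})$, rather than having some diffuse part supported on $\La_\la$ or leaking onto $A_\la$. This requires using the maximality of $A_\la$ (Lemma~\ref{lem:ds_eigenspace}) to pin down exactly where $P$ can be nonzero on $L_\la$, combined with regularity of the Borel measures $(P(A)\xi,\xi)$ to pass from open/closed sets to singletons; the finiteness of $\La_\la$ from (b) is what makes the decomposition into singletons legitimate and the multiplicities finite.
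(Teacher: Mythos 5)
Your proposal is correct and matches the paper's own proof essentially step for step: symmetry of $B$ via the $ds$-invariance of $L_\la$ and axiom~(ii), finiteness of $B_t$ via Lemma~\ref{lem:ds_eigenspace} together with compactness of $ds$, the multiplicity $m(y,y')=\dim\operatorname{im}P(\{(y,y')\})$ with symmetry from $ds\circ P(y,y')=P(y',y)\circ ds$, and $U$ assembled from the orthogonal decomposition of $\cH$ into the point-mass subspaces; the measure-theoretic step you flag (maximality of $A_\la$ plus finiteness of $\La_\la$ forcing $P(\La_\la)$ to split into nonzero singleton projectors onto subspaces of $L_\la$) is precisely what the paper delegates to that lemma. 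The only detail to patch is that the orthonormal bases of $\operatorname{im}P(\{(y,y')\})$ and $\operatorname{im}P(\{(y',y)\})$ must be chosen compatibly --- e.g.\ fix a basis on one side of each unordered pair and transport it by $|\la|^{-1}ds$, which is unitary between these two subspaces because $ds^2=\la^2$ there --- since with independent basis choices your $U$ conjugates $ds$ only to $ds'$ twisted by a unitary in each fiber, not to $ds'$ itself.
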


The proof is based on the following lemma.

\begin{lem}\label{lem:ds_eigenspace} Assume that a spectral triple
$\{\cA,\cH,ds\}$
on a compact metric space
$X$
satisfied axioms (i), (ii) of spectral geometry. Let
$L_\la$
be the eigenspace of
$ds^2$
corresponding to an eigenvalue
$\la^2$.
Then there exists a unique maximal open subspace
$A_\la\sub X^2$,
for which 
$P(A_\la)$
is the projector on a subspace orthogonal to
$L_\la$.
Furthermore, its complement
$\La_\la=X^2\sm A_\la$
is finite.
\end{lem}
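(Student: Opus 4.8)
The plan is to isolate the commutation between $P$ and the spectral projection $Q_\la$ of $ds^2$ onto $L_\la$, then settle the maximality of $A_\la$ by a countability argument, and finally extract finiteness of $\La_\la$ from $\dim L_\la<\infty$.

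First I would record that, by axiom (ii), $ds^2 P(A)=ds\,P(\ga A)\,ds=P(\ga^2A)\,ds^2=P(A)\,ds^2$ for every Borel $A$, since $\ga^2=\id$; hence each $P(A)$ commutes with the compact operator $ds^2$ and therefore with all of its spectral projections, in particular $Q_\la P(A)=P(A)Q_\la$. Injectivity of $ds$ rules out $0$ as an eigenvalue of $ds^2$, so compactness forces $\dim L_\la<\infty$. I would then note that the defining property ``$P(A)$ projects onto a subspace orthogonal to $L_\la$'' is exactly the condition $Q_\la P(A)=0$, equivalently $P(A)\eta=0$ for all $\eta\in L_\la$.

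For maximality, write $A_\la=\bigcup_\be A_\be$ over all open $A_\be$ with $Q_\la P(A_\be)=0$. Fixing $\eta\in L_\la$, the assignment $\nu(A)=(P(A)\eta,\eta)=\|P(A)\eta\|^2$ is a finite Borel measure vanishing on each $A_\be$. The one genuinely non-formal point here is that the union is a priori uncountable, so additivity of $\nu$ does not apply directly; I would resolve this using that $X^2$ is a compact metric space and hence Lindel\"of, so $\{A_\be\}$ has a countable subcover of $A_\la$, after which countable subadditivity forces $\nu(A_\la)=0$, i.e.\ $P(A_\la)\eta=0$. Ranging over $\eta$ gives $Q_\la P(A_\la)=0$, so $A_\la$ itself has the defining property and, containing every $A_\be$, is the unique maximal such open set.

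Finiteness of $\La_\la$ is the crux. Choosing an orthonormal basis $e_1,\dots,e_d$ of $L_\la$ and setting $\mu=\sum_{i=1}^d(P(\cdot)e_i,e_i)$, a finite positive measure, I would observe that $Q_\la P(A)=0$ holds iff $\|P(A)e_i\|^2=(P(A)e_i,e_i)=0$ for all $i$, i.e.\ iff $\mu(A)=0$; thus $A_\la$ is the complement of the topological support of $\mu$ and $\La_\la=\supp\mu$. To bound $\supp\mu$, suppose it had $d+1$ distinct points and choose pairwise disjoint open neighborhoods $U_0,\dots,U_d$; each satisfies $\mu(U_j)>0$, so the compressions $Q_\la P(U_j)$ are nonzero projections on $L_\la$ that are pairwise orthogonal because $P(U_j)P(U_k)=P(U_j\cap U_k)=0$. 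Since $\dim L_\la=d$ cannot carry $d+1$ mutually orthogonal nonzero projections, $\La_\la$ has at most $d$ points. I expect this last pigeonhole passage---turning disjointness of neighborhoods into an impossible orthogonal family on the finite-dimensional eigenspace---to be the main obstacle, with the Lindel\"of reduction in the preceding step a close second.
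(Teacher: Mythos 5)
Your proposal is correct and takes essentially the same route as the paper: you commute $P$ with the spectral projection onto the finite-dimensional eigenspace $L_\la$, dispose of the uncountable union by a measure-theoretic reduction (your Lindel\"of/countable-subcover step replaces the paper's equivalent use of inner regularity plus a finite subcover of a compact $K\sub A_\la$), and bound $|\La_\la|$ by $\dim L_\la$ via the same pigeonhole on pairwise orthogonal nonzero projections $Q_\la P(U_j)$ coming from disjoint neighborhoods. The only cosmetic difference is in the last step, where the paper phrases the pigeonhole as a contradiction with the maximality of $A_\la$ (at least one $P(U_i)$ must be orthogonal to $L_\la$, allowing $A_\la$ to be enlarged), while you identify $\La_\la$ with the support of the auxiliary measure $\mu$ and contradict $\mu(U_j)>0$ directly.
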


\begin{proof} Let
$\{A_\si\}$
be the collection of all open subsets in
$X^2$
for which 
$P(A_\si)$
is a projector on a subspace orthogonal to
$L_\la$, $A_\la=\cup_\si A_\si$.
We show that 
$P(A_\la)$
is also a projector on a subspace
$Q_\la$
which is orthogonal to
$L_\la$.
Let
$P_\la$
be the projector on
$L_\la$.
Then, since the spectral measure
$P$
commutes with
$P_\la$,
the operator
$P(A_\la)\circ P_\la$
is a projector on
$Q_\la\cap L_\la$,
Thus if the assertion is not true, then there exists a vector
$\xi\in L_\la$, $\|\xi\|=1$,
for which
$P(A_\la)\xi=\xi$.
Then the Borel measure
$\mu=(P(A_\la)\xi,\xi)$
has the properties:
$\mu(A_\si)=0$
for all
$\si$
and
$\mu(A_\la)=1$.
This, obiously, is impossible, because for every
$\ep>0$
one can find a compact subset
$K\sub A_\la$
with 
$\mu(K)\ge 1-\ep$.
On the other hand, one can cover
$K$
by a finite number of subsets from
$\{A_\si\}$
and consequently
$\mu(K)=0$.

Therefore,
$A_\la$
is a maximal open subset for which
$P(A_\la)$
is the projector on a subspace orthogonal to
$L_\la$.
Obviously, it is unique with this property. It remains to show that 
its complement
$\La_\la$
is finite.

The operator
$ds^2$
is compact, thus the space
$L_\la$
is finite dimensional. We show that 
$\La_\la$
consists of at most
$\dim L_\la(=\dim_\C L_\la)$
points. If this is not the case, then choosing disjoint
neighborhoods
$U_i$
of points
$z_1\dots z_n\in\La_\la$, $n>\dim L_\la$,
we find that at least one of the projectors
$P(U_i)$
is the projector on a subspace orthogonal to
$L_\la$.
The set 
$A_\la'=A_\la\cap U_i$
is open in
$X^2$
and contains
$A_\la$
as a proper subset. Furthermore,
$P(A_\la')=P(A_\la)+P(U_i\cap\La_\la)$
is the projector on a subspace orthogonal to
$L_\la$.
This contradicts the fact that 
$A_\la$
is maximal.
\end{proof}

\begin{proof}[Proof of Theorem~\ref{thm:spectral_triple_spgeom}] By 
axiom~(iv), given
$(y,y')\in B$
there exists an eigenvalue
$\la$
of
$ds$
with
$|yy'|=|\la|$.
Since
$ds$
is injective, it follows that
$B\sub X^2\sm\De$.
The subspace
$L_\la$
is invariant for 
$ds$,
thus it follows from (ii) that
$A_\la$
is symmetric. Thus
$B$
is symmetric,
$\ga(B)=B$.
Using (iv), we obtain that
$B_t=\set{(y,y'\in B}{$|yy'|\ge t$}$
coincides with
$\cup_{|\la|\ge t}A_\la$,
where the union is taken over all eigenvalues
$\la$
of
$ds$
with
$|\la|\ge t$.
By Lemma~\ref{lem:ds_eigenspace}, each
$\La_\la$
is finite. Then
$B_t$
is finite for each
$t>0$
since
$ds$
is compact. Therefore,
$B$
satisfies conditions (a), (b) of the main construction.

By Lemma~\ref{lem:ds_eigenspace} and axiom~(iii), the operator
$P(y,y')$
is a nonzero projector on a subspace
$L_{y,y'}\sub L_\la$
for every
$(y,y')\in \La_\la$.
We define a function
$m:B\to\N$
by
$m(y,y')=\dim L_{y,y'}$.
It follows from (ii) that
$$ds(L_{y,y'})=ds\circ P(y,y')(\cH)=P(y,y')\circ ds(\cH)=L_{y',y},$$
thus
$m$
is symmetric.

Using injectivity of
$ds$
and axiom~(i), now it is easy to construct an isometry
$U:\cH\to\cH'$
establishing an isomorphism between
$\{\cA,\cH,ds\}$
and
$M(B)=\{\cA,\cH',ds'\}$.
\end{proof}

The {\em support} of an operator
$A$
in
$\cH$
is defined as the complement to the union of all open subsets
$U\times U'\sub X^2$,
for which
$\pi(a)\circ A\circ\pi(a)=0$
for all
$a$, $a'\in\cA$
with
$\supp(a)\sub U$, $\supp(a')\sub U'$.

\begin{cor}\label{cor:supp_spgeom} Let
$M=\{\cA,\cH,ds\}$
be a spectral geometry on a compact metric space
$X$. 
Then the support of
$ds$
coincides with the closure of its support
$B$, $\supp ds=\ov B$.
\end{cor}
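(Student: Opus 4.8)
The plan is to reduce to the concrete model $M(B,m)$ and then read off the support of $ds$ directly from the multiplication structure. By Theorem~\ref{thm:spectral_triple_spgeom} there is an isometry $U:\cH\to\cH'$ conjugating $\pi$ to the representation of $M(B,m)$ and $ds$ to $ds'$. Since $\pi(a)\circ ds\circ\pi(a')=0$ if and only if $U\pi(a)U^{-1}\circ U\,ds\,U^{-1}\circ U\pi(a')U^{-1}=0$, the support of $ds$ (which is defined purely in terms of $\pi$ and $ds$) is unchanged under this isomorphism, so I may compute everything inside $M(B,m)=\{\cA,\cH',ds'\}$.

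The key computation is the following: for $a,a'\in\cA$ and $\xi\in\cH'$ one finds $(\pi(a)\circ ds'\circ\pi(a')\,\xi)(y,y')=a(y)\,a'(y')\,|yy'|\,\xi(y',y)$ for every $(y,y')\in B$. Since $B\sub X^2\sm\De$ we have $|yy'|>0$, and the fibrewise map $\eta\mapsto|yy'|\eta$ is invertible; hence this operator vanishes exactly when $a(y)\,a'(y')=0$ for all $(y,y')\in B$. The multiplicity $m$ plays no role here precisely because the scalar factor $|yy'|\ne0$ is invertible on each fibre.

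Next I translate this vanishing condition into a separation statement about the product set $U\times U'$. If $\supp a\sub U$ and $\supp a'\sub U'$, then $a(y)a'(y')\ne0$ forces $(y,y')\in U\times U'$; thus if $(U\times U')\cap B=\es$ the product $\pi(a)\circ ds'\circ\pi(a')$ is zero for all such $a,a'$. Conversely, if $(U\times U')\cap B\ne\es$, pick $(y_0,y_0')\in(U\times U')\cap B$ and Lipshitz bump functions $a,a'\in\cA$ (for instance suitably truncated distance functions, which are Lipshitz) supported in $U$ and $U'$ with $a(y_0),a'(y_0')\ne0$; then the operator is nonzero. So a basic open set $U\times U'$ lies in the defining union for $\supp ds'$ precisely when $(U\times U')\cap B=\es$.

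Finally, since the sets $U\times U'$ form a basis of the product topology on $X^2$, a point has some product neighbourhood missing $B$ if and only if it lies in the exterior $X^2\sm\ov B$; hence the union of all admissible $U\times U'$ equals $X^2\sm\ov B$, and taking complements gives $\supp ds=\ov B$. The argument is essentially mechanical once the commutation computation is in hand; the only points requiring a word of justification are that $\cA$ contains enough bump functions (supplied by Lipshitz distance functions on a metric space) and the elementary topological fact that basic product sets detect the closure.
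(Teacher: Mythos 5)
Your proof is correct and follows the same route as the paper: reduce to the model $M(B,m)$ via Theorem~\ref{thm:spectral_triple_spgeom}, compute $(\pi(a)\circ ds\circ\pi(a')\,\xi)(y,y')=a(y)\,|yy'|\,a'(y')\,\xi(y',y)$, and conclude that the operator vanishes iff $a(y)a'(y')=0$ on $B$. The paper compresses the remaining step into ``Hence the claim''; you merely make explicit what it leaves implicit, namely the existence of Lipshitz bump functions and the fact that basic product neighbourhoods $U\times U'$ missing $B$ sweep out exactly $X^2\sm\ov B$.
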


\begin{proof} By Theorem~\ref{thm:spectral_triple_spgeom}, the spectral
geometry
$M$
coincides with
$M(B,m)$,
where
$m$
is the multiplicity function. For every
$a$, $a'\in\cA$, $\xi\in\cH$
and
$(y,y')\in B$
we have
$$(\pi(a)\circ ds\circ\pi(a)\xi)(y,y')=a(y)\cdot|yy'|\cdot a'(y')\cdot\xi(y',y).$$
Thus
$\pi(a)\circ ds\circ\pi(a')=0\Leftrightarrow a(y)\cdot a'(y')=0$
for all
$(y,y')\in B$.
Hence the claim.
\end{proof}

\subsection{Existence of spectral geometries with nonzero density}
\label{subsect:exist_spgeom_dens}

Given a compact metric space
$X$,
we show how to construct a spectral geometry on
$X$
with arbitrary large density. By Theorem~\ref{thm:spectral_triple_spgeom},
it suffices to find a subset
$B\sub X^2\sm\De$
satisfying conditions (a), (b) of the main construction and
having the density bigger than a given number.

\begin{pro}\label{pro:large_density} Given
$D\in (2,\infty]$,
let
$\{Y_k\}$
be a sequence of finite 
$\de_k$-nets
in
$X$
such that
$\de_k\searrow 0$
as
$k\to\infty$.
We put
$B=\cup_kB(k)$,
where
$$B(k)=\set{(y,y')\in Y_k^2\sm\De}{$|yy'|\le\de_{k-1}\phi(\de_{k-1}$},$$
$\phi(t)=2D$
if
$D<\infty$,
and if
$D=\infty$,
then
$\phi(t)\ge 4$
be a function with
$\phi(t)\to\infty$
and
$t\phi(t)\searrow 0$
as
$t\to 0$.

Then the set 
$B$
satisfies conditions (a), (b) of the main construction 
and has density
$\dens(B)\ge D$,
if
$D<\infty$,
and the local density
$\dens_{\loc}(B)=\infty$,
if
$D=\infty$.
\end{pro}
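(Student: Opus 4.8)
The plan is to check the two structural conditions directly and then, for each point $z=(x,y)$ off the diagonal and each target ratio $\rho$, to produce a witness $b\in B$ realizing $|z|/|zb|$ close to $\rho$; the whole argument rests on choosing the right scale $k$.

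Conditions (a) and (b) are the easy part. Each $B(k)$ is symmetric, since $Y_k^2\sm\De$ is symmetric and the defining constraint $|yy'|\le\de_{k-1}\phi(\de_{k-1})$ is symmetric in $y,y'$; hence $B=\cup_kB(k)$ is symmetric. For (b), observe that the thresholds $\de_{k-1}\phi(\de_{k-1})$ tend to $0$ as $k\to\infty$ (this is $2D\de_{k-1}$ when $D<\infty$, and $t\phi(t)\searrow0$ when $D=\infty$). Thus for fixed $t>0$ only finitely many indices $k$ satisfy $\de_{k-1}\phi(\de_{k-1})\ge t$, while for every other $k$ the set $B(k)$ contains no pair with $|yy'|\ge t$. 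Since each $Y_k$ is finite, $B_t$ is contained in a finite union of finite sets and is therefore finite.

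The heart of the matter is the density estimate, which I would run in a form common to both cases. Fix $z=(x,y)$ with $r:=|z|=|xy|>0$ and a target $\rho>2$ (namely $\rho=D-\ep$ for $D<\infty$, and $\rho=N$ arbitrarily large for $D=\infty$). I would select the scale $k$ so that $\de_k\le r/\rho\le\de_{k-1}$, which is possible because $\de_k\searrow0$ (taking $\de_0\ge\diam X$ guarantees that the value $r/\rho$ lies in the range covered by the nets). Since $Y_k$ is a $\de_k$-net, choose $w,w'\in Y_k$ with $|xw|,|yw'|\le\de_k\le r/\rho$; then $|zb|=\max\{|xw|,|yw'|\}\le r/\rho$ for $b=(w,w')$, so $|z|/|zb|\ge\rho$. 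It remains to verify that $b\in B(k)$. The pair is off the diagonal, since $|ww'|\ge r-2\de_k\ge r(1-2/\rho)>0$ using $\rho>2$; and it must meet the threshold, where the relevant estimate is $|ww'|\le r+2\de_k\le r(1+2/\rho)$ compared with $\de_{k-1}\phi(\de_{k-1})$.

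This final comparison is where I expect the only real work, and it is exactly where the hypotheses $D>2$ and $\phi(t)\to\infty$ enter. For $D<\infty$ the threshold is $2D\de_{k-1}\ge 2Dr/(D-\ep)$, and the inequality $r(1+2/(D-\ep))\le 2Dr/(D-\ep)$ reduces to $D\ge 2-\ep$, which holds; letting $\ep\to0$ gives $\sup_{b\in B}|z|/|zb|\ge D$ for every $z$, hence $\dens(B)\ge D$. For $D=\infty$ I would restrict to $z\in U_t$, so $r\le t$; by the monotonicity coming from $t\phi(t)\searrow0$ the threshold satisfies $\de_{k-1}\phi(\de_{k-1})\ge(r/N)\phi(r/N)$, and the required bound $r(1+2/N)\le(r/N)\phi(r/N)$ reduces to $\phi(r/N)\ge N+2$. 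Since $\phi(s)\to\infty$ as $s\to0$, there is $t_0$ with $\phi(s)\ge N+2$ for $s\le t_0$, so the estimate holds once $r$ is small; thus $\dens(B,U_t)\ge N$ for $t$ small, and letting $N\to\infty$ yields $\dens_{\loc}(B)=\infty$. The main obstacle is precisely the simultaneous control forcing $k$ to make $\de_k$ small (so $w,w'$ sit near $x,y$) yet $\de_{k-1}$ large (so $|ww'|$ stays under the threshold); the straddling choice $\de_k\le r/\rho\le\de_{k-1}$ is what reconciles the two.
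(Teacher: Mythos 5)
Your proof is correct and takes essentially the same route as the paper: symmetry is immediate, finiteness of $B_t$ follows from the thresholds $\de_{k-1}\phi(\de_{k-1})\to 0$ together with the finiteness of each $Y_k$, and the density bound comes from approximating a given pair by net points of $Y_k$ at a straddled scale, with the off-diagonal check using $\rho>2$ and the threshold check using $D>2$ (resp.\ $\phi(t)\to\infty$ and the monotonicity of $t\phi(t)$). The only difference is cosmetic: the paper selects $k$ by $\de_k\phi(\de_k)/2\le|xx'|\le\de_{k-1}\phi(\de_{k-1})/2$, whereas you straddle $r/\rho$ between $\de_k$ and $\de_{k-1}$ (with the same normalization role played by your assumption $\de_0\ge\diam X$ as by the paper's $\diam X\le\de_1\phi(\de_1)/2$), and the resulting estimates coincide.
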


\begin{proof} The set 
$B$
is obviously symmetric. Let
$t>0$.
Then for 
$(y,y')\in B_t$
we have
$(y,y')\in B(k)$
and
$t\le|yy'|\le\de_{k-1}\phi(\de_{k-1})$.
Hence
$k\le k(t)$
for all
$(y,y')\in B_t$.
Thus
$B_t$
is finite. 

One can assume that
$\diam X\le\de_1\phi(\de_1)/2$.
Given
$x\neq x'\in X$,
there exists 
$k\ge 2$
such that
$\de_k\phi(\de_k)/2\le|xx'|\le\de_{k-1}\phi(\de_{k-1})/2$.
Since
$Y_k$
is a
$\de_k$-net,
one can find
$y$, $y'\in Y_k$
with
$|xy|$, $|x'y'|\le\de_k\le 2|xx'|/\phi(\de_k)$.
Furthermore,
$|yy'|\ge|xx'|-2\de_k\ge\de_k(\phi(\de_k)-2)>0$
and
$|yy'|\le|xx'|+2\de_k\le\de_{k-1}\phi(\de_{k-1})/2+2\de_k\le\de_{k-1}\phi(\de_{k-1})$. 
Thus
$(y,y')\in B$.
Therefore, given
$z=(x,x')\in X^2\sm\De$,
we have found 
$b=(y,y')\in B$
with
$|z|/|zb|\ge\phi(\de_k)/2$.
Thus
$\dens(B)\ge D$,
if
$D<\infty$.
For 
$D=\infty$
this argument shows that a point
$z\in X^2\sm\De$
can be approximated by points of
$B$
with a prescribed arbitrarily small error, if
$z$
lies in a sufficiently small neighborhood of the diagonal
$\De$.
Thus
$\dens_{\loc}(B)=\infty$.
\end{proof}

\section{Bounded deformations of a spectral geometry}
\label{sect:bounded_deformations}

If axioms (i)--(iii) for a spectral triple are fulfilled, then
axiom~(iv) uniquely defines a spectral geometry whose unit length
operator differs from that one of the spectral triple only by 
the spectrum. Forgetting axiom~(iv) we come up with the notion
of deformation of a spectral geometry.

Let
$M=\{\cA,\cH,ds\}$
be a spectral geometry on a compact metric space
$X$.
Its {\em deformation} is given by a compact self-adjoint operator
$ds'$
on
$\cH$
of the form
$$ds'(\xi)(y,y')=\rho(y,y')\xi(y',y),$$
where
$\rho:B\to\R$
is a symmetric positive function on the support 
$B$
of
$M$.
Notation:
$ds'=ds_\rho$.
Therefore, the deformation
$ds_\rho$
of
$M$
can be identified with a positive symmetric function
$\rho:B\to\R$
which is called the {\em marked spectrum} of
$ds_\rho$.

A deformation
$ds_\rho$
is said to be {\em bounded}, if the ratio
$\rho(y,y')/|yy'|$
is bounded and separated away from zero on
$B$.

Every deformation
$ds_\rho$
of a spectral geometry
$\{\cA,\cH,ds\}$
defines the Connes metric on
$X$,
$$|xx'|_\rho=\sup\set{|a(x')-a(x)|}{$a\in\cA, \|[\cD_\rho,\pi(a)]\||\le 1$},$$
where
$\cD_\rho$
is the operator inverse to
$ds_\rho$.
The next proposition immediately follows from definitions.

\begin{pro}\label{pro:bounded_deformations_bilipschitz} Let
$ds_\rho$
be a bounded deformation of a spectral geometry
$M$
on a compact metric space
$X$.
Then its Connes metric
$|\ |_\rho$
is biLipschitz equivalent to the Connes metric
$|\ |_M$
of
$M$.
More precisely,
$$c|xx'|_M\le|xx'|_\rho\le C|xx'|_M\quad\textrm{for all}\quad x\neq x'\in X$$
and any constants
$C$, $c>0$
satisfying the condition
$c\le\rho(y,y')/|yy'|\le C$
for all
$(y,y')\in\supp M$.
\qed 
\end{pro}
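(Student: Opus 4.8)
The plan is to compare the Connes metrics $|\ |_\rho$ and $|\ |_M$ directly through the norms of the commutators $[\cD_\rho,\pi(a)]$ and $[\cD,\pi(a)]$, since both metrics are defined as suprema of $|a(x)-a(x')|$ over functions $a$ for which the respective commutator norm is at most $1$. First I would compute these two commutator norms explicitly. Since $M$ has support $B$ with $\cD=ds^{-1}$ and $\cD_\rho=ds_\rho^{-1}$, the formula for the commutator established in the main construction gives
\begin{equation*}
\|[\cD,\pi(a)]\|=\sup_{(y,y')\in B}\frac{|a(y)-a(y')|}{|yy'|},\qquad
\|[\cD_\rho,\pi(a)]\|=\sup_{(y,y')\in B}\frac{|a(y)-a(y')|}{\rho(y,y')}.
\end{equation*}
These are the only two quantities that enter the definitions of the two metrics.

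Next I would use the boundedness hypothesis $c\le\rho(y,y')/|yy'|\le C$ on $B$ to compare the two suprema termwise. From $\rho(y,y')\le C|yy'|$ we get $|a(y)-a(y')|/\rho(y,y')\ge |a(y)-a(y')|/(C|yy'|)$, and from $\rho(y,y')\ge c|yy'|$ we get the reverse bound, so that pointwise on $B$
\begin{equation*}
\frac{1}{C}\cdot\frac{|a(y)-a(y')|}{|yy'|}\le\frac{|a(y)-a(y')|}{\rho(y,y')}\le\frac{1}{c}\cdot\frac{|a(y)-a(y')|}{|yy'|}.
\end{equation*}
Taking suprema over $(y,y')\in B$ yields $\tfrac1C\|[\cD,\pi(a)]\|\le\|[\cD_\rho,\pi(a)]\|\le\tfrac1c\|[\cD,\pi(a)]\|$ for every $a\in\cA$.

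Finally I would translate these commutator-norm inequalities into the claimed metric inequalities by a scaling argument on the admissible functions. If $\|[\cD_\rho,\pi(a)]\|\le 1$, then $\|[\cD,\pi(a)]\|\le C$, so $b=a/C$ satisfies $\|[\cD,\pi(b)]\|\le 1$ and contributes $|a(x)-a(x')|/C$ to the supremum defining $|xx'|_M$; taking the supremum over all such $a$ gives $|xx'|_\rho\le C|xx'|_M$. Symmetrically, if $\|[\cD,\pi(a)]\|\le 1$ then $\|[\cD_\rho,\pi(ca)]\|\le 1$, which yields $|xx'|_\rho\ge c|xx'|_M$, completing the chain $c|xx'|_M\le|xx'|_\rho\le C|xx'|_M$. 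I do not expect any genuine obstacle here, since the statement is explicitly billed as following immediately from the definitions; the only point requiring a little care is that the marked spectrum $\rho$ need not satisfy axiom (iv), so $|\ |_\rho$ is a priori only the Connes metric of a deformation and not of a spectral geometry — but the commutator computation above never uses axiom (iv), only the form $ds_\rho(\xi)(y,y')=\rho(y,y')\xi(y',y)$, so the argument goes through verbatim.
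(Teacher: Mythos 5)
Your proof is correct and is precisely the definitional argument the paper intends: the paper prints no proof at all (the proposition is stated with the remark that it ``immediately follows from definitions''), and your explicit computation of the two commutator norms $\|[\cD,\pi(a)]\|=\sup_{(y,y')\in B}|a(y)-a(y')|/|yy'|$ and $\|[\cD_\rho,\pi(a)]\|=\sup_{(y,y')\in B}|a(y)-a(y')|/\rho(y,y')$, the termwise comparison via $c\le\rho(y,y')/|yy'|\le C$, and the rescaling of admissible functions simply spells out what that remark leaves to the reader. Your closing observation --- that axiom (iv) is never used, only the form $ds_\rho(\xi)(y,y')=\rho(y,y')\xi(y',y)$ with $\rho$ symmetric --- is also the right point of care and matches how the paper itself uses these norms later (e.g.\ in the proof of Proposition~\ref{pro:bounded_deformation_connected}).
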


\subsection{Bounded deformations of a connected spectral geometry}
\label{subsect:bounded_deformations_connected}

Let
$M=\{\cA,\cH,ds\}$
be a spectral geometry on a compact metric space
$X$, $B\sub X^2\sm\De$
its support. Given
$t>0$
we define
$\Ga_t$        
as a graph whose set of oriented edges is
$B_t=\set{b\in B}{$|b|\ge t$}$
and the vertex set 
$Y_t$
is the projection of
$B_t$
on
$X$.
The graph
$\Ga_t$
is called the {\em incidence} graph of level
$t$
of 
$M$.
It follows from the axioms that the incidence graph
$\Ga_t$
is finite and has no isolated vertices for every
$t>0$.

A spectral geometry
$M$
is said to be {\em connected} if its incidence graph is connected 
for each sufficiently small
$t>0$.
The property of a spectral geometry to be connected is not 
very much restrictive, and the construction described in
Proposition~\ref{pro:large_density} usually gives a connected
spectral geometry.

Let
$Y\sub X$
be the projection of the support
$B\sub X^2$
on the factor
$X$.
Then distinct points
$y$, $y'\in Y$
are vertices of the incidence graph
$\Ga_t$
for every sufficiently small level
$t>0$.

Let
$\rho$
be the marked spectrum of a bounded deformation
$ds_\rho$
of
$M$.
Its restriction to
$\Ga_t$
defines an intrinsic metric
$\ov\rho_t$
on each connected component of the graph
$\Ga_t$,
i.e., we first extend 
$\rho$
to a locally intrinsic metric on
$\Ga_t$,
for which the length of each edge 
$(y,y')\in B$
is equal to
$\rho(y,y')$,
and then we let
$\ov\rho_t$
be the length of a shortest path in
$\Ga_t$
between
$y$
and
$y'$
with respect to this metric. The function
$\ov\rho_t$
does not decrease in
$t$,
thus there exists a limit
$$\ov\rho(y,y')=\lim_{t\to 0}\ov\rho_t(y,y').$$
If
$(y,y')\in B$
or if the geometry
$M$
is connected, then this limit is finite. Furthermore,
$\ov\rho(y,y')=0\Leftrightarrow y=y'$,
since 
$ds_\rho$
is a bounded deformation, and
$\ov\rho$
satisfies the triangle inequality by construction.

\begin{pro}\label{pro:bounded_deformation_connected} Let
$d_\rho$
be the Connes metric associated with a bounded deformation
$\rho$
of a connected spectral geometry
$M$
on a compact metric space
$X$, $D=\dens M>2$.
Then
$d_\rho(y,y')=\ov\rho(y,y')$
for every
$y$, $y'\in Y$,
where
$Y\sub X$
is the projection of the support 
$B$
of
$M$.
\end{pro}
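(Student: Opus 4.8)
The plan is to reduce the operator-norm constraint defining $d_\rho$ to a Lipschitz condition against the intrinsic metric $\ov\rho$, prove the easy inequality $d_\rho\le\ov\rho$ directly, and obtain the reverse inequality by feeding the $\ov\rho$-distance function into the supremum defining $d_\rho$. The whole difficulty is concentrated in showing that this distance function is admissible, i.e.\ that it extends to a Lipschitz function on $X$.

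First I would compute the deformed commutator. Writing $\cD_\rho=ds_\rho^{-1}$, exactly as in the main construction one gets $(\cD_\rho\xi)(y,y')=\rho(y,y')^{-1}\xi(y',y)$, and hence
$$([\cD_\rho,\pi(a)]\xi)(y,y')=\frac{a(y')-a(y)}{\rho(y,y')}\xi(y',y),\qquad \|[\cD_\rho,\pi(a)]\|=\sup_{(y,y')\in B}\frac{|a(y)-a(y')|}{\rho(y,y')}.$$
Thus admissibility $\|[\cD_\rho,\pi(a)]\|\le1$ means exactly $|a(y)-a(y')|\le\rho(y,y')$ on every edge $(y,y')\in B$. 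Summing this along a shortest path in the incidence graph $\Ga_t$ gives $|a(y)-a(y')|\le\ov\rho_t(y,y')$, and since $\ov\rho=\inf_t\ov\rho_t$ (connectedness of $M$ puts $y,y'$ in a common finite component, so $\ov\rho$ is finite), we obtain $|a(y)-a(y')|\le\ov\rho(y,y')$ for all $y,y'\in Y$. Taking the supremum over admissible $a$ yields the upper bound $d_\rho(y,y')\le\ov\rho(y,y')$. The converse half of the reduction is immediate: a function that is $1$-Lipschitz for $\ov\rho$ on $Y$ satisfies $|a(y)-a(y')|\le\ov\rho(y,y')\le\rho(y,y')$ on edges, hence is admissible; note also that admissibility depends only on $a|_Y$.

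For the lower bound I would fix $y\in Y$ and test with $a_0:=\ov\rho(y,\cdot)$. By the reverse triangle inequality $a_0$ is $1$-Lipschitz for $\ov\rho$ on $Y$, with $a_0(y)=0$ and $a_0(y')=\ov\rho(y,y')$, so as soon as $a_0$ lies in $\cA$ any $|\ |$-Lipschitz extension of it is admissible and gives $d_\rho(y,y')\ge|a_0(y)-a_0(y')|=\ov\rho(y,y')$, finishing the proof. The one nontrivial point is therefore that $a_0$ extends to a function on $X$ that is Lipschitz for the original metric. By the McShane--Whitney extension theorem this holds as soon as $a_0$ is $|\ |$-Lipschitz on $Y$, for which it suffices to prove the uniform comparison
$$\ov\rho(p,q)\le K\,|pq|\qquad(p,q\in Y),\qquad K=C\,\frac{D+2}{D-2},$$
where $C=\sup_{B}\rho/|\ |<\infty$ by boundedness of the deformation.

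This comparison is the main obstacle, and I would establish it by a chaining argument driven by the density hypothesis $D>2$. Put $\al=1/D<1/2$. Given $p,q\in Y$ with $r=|pq|$, density yields an edge $(u,u')\in B$ with $|pu|,|u'q|\le\al r$, whence $\rho(u,u')\le C|uu'|\le C(1+2\al)r$; applying the same step to the pairs $(p,u)$ and $(u',q)$, which live at scale $\le\al r$, and iterating, produces a concatenation of edges joining $p$ to $q$ of total $\rho$-length at most $C(1+2\al)\,r\sum_{k\ge0}(2\al)^k=Kr$. Equivalently, with $K_\ast=\sup_{p\neq q}\ov\rho(p,q)/|pq|$, one step gives the self-improving estimate $K_\ast\le 2\al K_\ast+C(1+2\al)$, forcing $K_\ast\le K$. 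The delicate part is to make this rigorous, since the recursion never terminates: the constructed concatenation is an infinite path whose tail has vanishing $\rho$-length, hence is $\ov\rho$-Cauchy and, because $\ov\rho\ge c\,|\ |$, converges in the compact space $X$ to $q$. I expect the real work to lie in securing the a priori finiteness of $K_\ast$ needed to close the self-improving inequality — using connectedness of $M$ together with compactness of $X$ to control that tail — after which the two inequalities $d_\rho\le\ov\rho$ and $d_\rho\ge\ov\rho$ are essentially formal.
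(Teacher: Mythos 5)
Your proposal is correct and follows essentially the same route as the paper: the easy inequality $d_\rho\le\ov\rho$ by summing the edge constraint $|a(y)-a(y')|\le\rho(y,y')$ along paths in $\Ga_t$, and the reverse inequality by testing with $a_y=\ov\rho(y,\cdot)$, proving it is $|\ |$-Lipschitz on $Y$ with the identical constant $C(D+2)/(D-2)$ (your chaining bound $C(1+2\al)\sum_k(2\al)^k$ with $\al=1/D$ equals it), and extending to $X$ before checking admissibility. The one point you flag as "the real work" — closing the self-improving estimate without a priori finiteness — is exactly the spot the paper disposes of by applying the argument of Lemma~\ref{lem:sup_est_below} directly to $a_y$ on $Y$ (explicitly not assuming $\Lip(a_y)<\infty$, and using $|a_y(z)-a_y(z')|\le\ov\rho(z,z')\le\rho(z,z')\le C|zz'|$ on edges), so your chaining is that same density-driven self-improvement in expanded form rather than a genuinely different argument.
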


\begin{proof} Assume that a function
$a\in\cA$
satisfies the condition
$\|[\cD_\rho,\pi(a)]\|\le 1$.
This means that
$|a(y)-a(y')|\le\rho(y,y')$
for each point
$(y,y')\in B$.
Thus
$$|a(y)-a(y')|\le\sum_{i=0}^{k-1}|a(y_{i+1})-a(y_i)|\le\sum_{i=0}^{k-1}\rho(y_{i+1},y_i)$$
for every sequence of edges with consecutive vertices
$y_0,\dots,y_k\in Y$
between
$y=y_0$
and
$y'=y_k$.
In particular,
$|a(y)-a(y')|\le\ov\rho(y,y')$
and whence
$d_\rho(y,y')\le\ov\rho(y,y')$.

For the proof of the opposite inequality we recall that the set 
$Y$
is everywhere dense in
$X$ 
due to the condition
$D>2$
(see sect.~\ref{sect:axioms_spectral}). We fix 
$y\in Y$
and consider the function
$a_y:Y\to\R$, $a_y(y')=\ov\rho(y,y')$.
Though this function is not defined on all over
$X$
and a priori is not necessarily Lipshitz on
$Y$,
nevertheless, one can apply to it the arguments of Lemma~\ref{lem:sup_est_below}
and obtain that
$$\Lip(a_y)\le\frac{D+2}{D-2}\sup_{(z,z')\in B}\frac{|a_y(z)-a_y(z')|}{|zz'|}$$
(the Lipshitz constant
$\Lip(a_y)$
is not supposed to be finite). Since
$$|a_y(z')-a_y(z)|\le\ov\rho(z,z')\le\rho(z,z')\le C|zz'|$$ 
for all
$(z,z')\in B$,
we have
$\Lip(a_y)\le C(D+2)/(D-2)<\infty$.
Thus the function
$a_y$
can be extended to a Lipshitz function on
$X$,
for which we use the same notation. Furthermore, for the operator
$\cD_\rho=ds_\rho^{-1}$
we have
$$\|[\cD_\rho,\pi(a_y)]\|=\sup_{(z,z')\in B}\frac{|a_y(z')-a_y(z)|}{\rho(z,z')}\le 1$$
and
$|a_y(y)-a_y(y')|=\ov\rho(y,y')$.
Therefore,
$d_\rho(y,y')\ge\ov\rho(y,y')$.
\end{proof}

\subsection{Bounded deformations and Hausdorff-Gromov convergence}
\label{subsect:bounded_deformations_hausdorff_gromov}
Deformations of a spectral geometry seem to be a convenient tool
for construction of metrics with prescribed properties. Thus it
is important to understand relations between different topologies
on the sets of deformations and Connes metrics associated with
these deformations. We start with the compact-open topology on the first set
and Hausdorff-Gromov topology on the second one.

The {\em compact-open} topology on the set 
$\Xi$
of bounded deformations is defined as the compact-open topology on 
the set of maps of form 
$1/\rho:B\to\R$, $\rho\in\Xi$.
In other words, the base of neighborhoods of a marked spectrum
$\rho_0\in\Xi$
with respect to this topology consists of sets
$$U_{t,\ep}(\rho_0)=\set{\rho\in\Xi}
  {$\left|\frac{1}{\rho(y,y')}-\frac{1}{\rho_0(y,y')}\right|<\ep\quad\textrm{for all}\quad 
  (y,y')\in B_t$},$$
$t$, $\ep>0$.

The condition that deformations are close with respect to compact-open
topology is too weak for the associated Connes metrics to be close even
in such rough topology as the Hausdorff-Gromov one. Nevertheless,
they are close under some additional restrictions.

\begin{thm}\label{thm:compact_open_hausdorff_gromov} Let
$M=\{\cA,\cH,ds\}$
be a connected spectral geometry of density
$D>2$
on a compact metric space
$X$.
If a sequence
$\{\rho_i\}$
of its deformations converges to
$\rho_0\in\Xi$
in the compact-open topology and
$\rho_0(y,y')\le\rho_i(y,y')\le C|yy'|$
for all
$(y,y')\in\supp M$, $i\ge 1$,
and some constant
$C>0$,
then the associated Connes metrics
$d_i=d_{\rho_i}$
on
$X$
Hausdorff-Gromov converge to the Connes metric
$d_0$
associated with 
$\rho_0$.
\end{thm}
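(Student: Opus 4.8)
The plan is to deduce Hausdorff-Gromov convergence from the stronger statement that the distance functions $d_i$ converge to $d_0$ \emph{uniformly} on $X\times X$, and to obtain the latter from \emph{pointwise} convergence on the dense set $Y\times Y$ combined with an equicontinuity (Arzel\`a--Ascoli) argument. First I would record uniform bounds. Since $\rho_0$ is a bounded deformation, $c_0:=\inf_{(y,y')\in B}\rho_0(y,y')/|yy'|>0$, and the hypotheses give $c_0|yy'|\le\rho_0(y,y')\le\rho_i(y,y')\le C|yy'|$ on $B$ for every $i$. By Proposition~\ref{pro:bounded_deformations_bilipschitz} this yields $c_0|xx'|_M\le d_i(x,x')\le C|xx'|_M$ for all $i$ and all $x,x'\in X$. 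Viewed as functions on the compact space $(X\times X,|\ |_M)$, the family $\{d_i\}$ is therefore uniformly bounded, and by the triangle inequality $|d_i(x_1,x_1')-d_i(x_2,x_2')|\le C(|x_1x_2|_M+|x_1'x_2'|_M)$ it is uniformly equicontinuous, hence precompact in $C(X\times X)$ by Arzel\`a--Ascoli. Here the upper hypothesis $\rho_i\le C|yy'|$ is essential.

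Next comes the heart of the matter: pointwise convergence $d_i(y,y')\to d_0(y,y')$ for each fixed pair $y,y'\in Y$. Since $M$ is connected and $D>2$, Proposition~\ref{pro:bounded_deformation_connected} identifies $d_i(y,y')=\ov\rho_i(y,y')$ and $d_0(y,y')=\ov\rho_0(y,y')$, the intrinsic metrics of the deformed incidence graphs. The lower bound is immediate: as $\rho_i\ge\rho_0$ pointwise on $B$, every path length only increases, so $\ov\rho_i\ge\ov\rho_0$ and $d_i(y,y')\ge d_0(y,y')$ for all $i$.

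For the upper bound fix $\ep>0$. Because $\ov\rho_0=\lim_{t\to0}\ov\rho_{0,t}=\inf_t\ov\rho_{0,t}$, I choose $t>0$ small enough that $\Ga_t$ is connected and contains $y,y'$, together with a shortest $\rho_0$-path $P$ in $\Ga_t$ whose $\rho_0$-length is at most $d_0(y,y')+\ep$. The decisive point is that all edges of $P$ lie in the \emph{finite} set $B_t$, on which compact-open convergence of $1/\rho_i$ to $1/\rho_0$ is genuinely uniform; since $\rho_0,\rho_i\ge c_0t$ there, this forces $\rho_i\to\rho_0$ uniformly on $B_t$. Summing over the finitely many edges of $P$ gives $\ov\rho_i(y,y')\le(\rho_i\text{-length of }P)\to(\rho_0\text{-length of }P)\le d_0(y,y')+\ep$, so $\limsup_i d_i(y,y')\le d_0(y,y')+\ep$. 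Letting $\ep\to0$ and combining with the lower bound yields $d_i(y,y')\to d_0(y,y')$ for every $y,y'\in Y$.

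Finally I would upgrade this to uniform convergence. As $Y$ is everywhere dense in $X$ for $D>2$ (sect.~\ref{sect:axioms_spectral}), $Y\times Y$ is dense in $X\times X$; an equicontinuous sequence that converges pointwise on a dense subset of a compact space converges uniformly, and its continuous limit must coincide with the pointwise limit $d_0$ everywhere. Thus $d_i\to d_0$ uniformly on $X\times X$, and the identity map of $X$ shows that the Hausdorff-Gromov distance between $(X,d_i)$ and $(X,d_0)$ is bounded by $\sup_{X\times X}|d_i-d_0|\to0$, which is the claim. The main obstacle is the upper bound: the trick that makes the weak compact-open hypothesis usable is that a near-geodesic for $\ov\rho_0$ can be confined to a single graph $\Ga_t$, so all its edges sit in the finite set $B_t$ where convergence is uniform, thereby sidestepping the short edges of length $<t$ that compact-open convergence does not control. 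Because both $t$ and $P$ depend on the pair $(y,y')$, only pointwise convergence is obtained directly, and the passage to uniform convergence genuinely needs the equicontinuity secured in the first step.
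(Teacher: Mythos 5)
Your argument is correct, and its analytic core is the same as the paper's: both proofs rest on Proposition~\ref{pro:bounded_deformation_connected} to identify $d_i(y,y')=\ov\rho_i(y,y')$ for $y,y'\in Y$, both get the lower bound $d_i\ge d_0$ on $Y$ from the monotonicity hypothesis $\rho_0\le\rho_i$, and both get the upper bound by freezing a finite level: since $B_t$ is finite, compact-open convergence of $1/\rho_i$ is uniform there (your observation that $\rho_0,\rho_i\ge c_0t$ on $B_t$ converts this into uniform convergence of $\rho_i$), so $(\ov\rho_i)_t\to(\ov\rho_0)_t$ for fixed $t$, and then $t\to 0$ — your near-geodesic $P$ confined to $\Ga_t$ is exactly the paper's two-step approximation ($\tau$ first, then $i$), made explicit. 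Where you genuinely diverge is the globalization from $Y$ to $X$. The paper invokes Lemma~\ref{lem:bounding_radius}: the condition $D>2$ makes $Y_t$ a $\de(t)$-net in $X$, and by Proposition~\ref{pro:bounded_deformations_bilipschitz} a $\de$-net for all the Connes metrics simultaneously, so convergence on the \emph{finitely many} pairs of $Y_t\times Y_t$ already bounds the Hausdorff-Gromov distance, with $\de\to 0$ taken at the end. You instead prove the stronger statement of uniform convergence of $d_i$ on $X\times X$: you spend the hypothesis $\rho_i\le C|yy'|$ on the uniform Lipschitz bound $|d_i(x_1,x_1')-d_i(x_2,x_2')|\le C\left(|x_1x_2|_M+|x_1'x_2'|_M\right)$ and apply an Arzel\`a--Ascoli/$3\ep$ argument to pointwise convergence on the dense set $Y\times Y$ (density of $Y$ again uses $D>2$), then conclude via the identity correspondence. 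This buys a sharper conclusion (uniform, not merely Hausdorff-Gromov, convergence of the distance functions) and bypasses Lemma~\ref{lem:bounding_radius} entirely, at the cost of needing pointwise convergence on all of $Y\times Y$ rather than on a single finite net; the paper's net argument is the one that would survive if one only controlled distances at a fixed scale. Both proofs are complete, and the same two hypotheses are consumed in both — only at different places.
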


We first briefly describe the idea of the proof. By the condition on the density
$D$
of
$M$,
the vertex set 
$Y_t$
of its incidence graph is a
$\de(t)$-net 
in
$X$
with
$\de(t)\le Dt/(D-2)$
for every level
$t>0$.
This is established in Lemma~\ref{lem:bounding_radius}. Thus it suffices to prove
the uniform convergence of the Connes metrics on sets
$Y_t$.
By Proposition~\ref{pro:bounded_deformation_connected},
we know how to find distances in the Connes metric
$d_\rho$
associated with a bounded deformation
$\rho\in\Xi$
between points of
$Y_t$.
The convergence
$\rho_i\to\rho$
in the compact-open topology implies the convergence of
intrinsic metrics
$(\ov\rho_i)_t\to (\ov\rho_0)_t$ 
of the incidence graphs of level
$t>0$.
The required convergence
$\ov\rho_i\to\ov\rho_0$
then follows from the condition
$\rho_i\ge\rho_0$.

The number
$\de(Y)=\inf\set{\de>0}{$X\sub\cup_{y\in Y}D_\de(y)$}$,
where
$D_r(x)$
is the ball
$\set{x'\in X}{$|xx'|\le r$}$,
is called the {\em bounding radius} of
$Y\sub X$.

\begin{lem}\label{lem:bounding_radius} Assume that a subset
$B\sub X^2$
is symmetric and has the density
$D>2$.
For
$t>0$
let
$Y_t\sub X$
be the projection on
$X$
of
$B_t=\set{(y,y')\in B}{$|yy'|\ge t$}$,
$\de(t)$
the bounding radius of
$Y_t$.
Then
$$\de(t)\le\frac{D}{D-2}\cdot t.$$
\end{lem}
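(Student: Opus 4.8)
The plan is to reduce the statement to a pointwise estimate: for each fixed $x\in X$ I will show $\dist(x,Y_t)\le r_0$, where $r_0:=\frac{D}{D-2}t$. Since condition~(b) forces $B_t$, and hence $Y_t$, to be finite, and $X$ is compact, the bounding radius equals $\de(t)=\max_{x\in X}\dist(x,Y_t)$, so the pointwise bound is exactly what is needed. The only input I will use is the density property recalled in subsection~\ref{subsect:density_sptriple}: because $D=\dens(B)>2$, for every $z=(x,x')\in X^2\sm\De$ there is a pair $(y,y')\in B$ with $|xy|,|x'y'|\le|xx'|/D$.

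The main step treats a fixed $x$ for which some point of $X$ lies at distance $\ge r_0$. The set $\set{x'}{$|xx'|\ge r_0$}$ is closed and nonempty, so by compactness the number $s_*=\min\set{|xx'|}{$|xx'|\ge r_0$}$ is attained by some $x'$ (note $x'\neq x$ since $s_*\ge r_0>0$). Applying the density property to $z=(x,x')$ yields $(y,y')\in B$ with $|xy|,|x'y'|\le s_*/D$, and the triangle inequality gives $|yy'|\ge s_*-2s_*/D=s_*(D-2)/D\ge r_0(D-2)/D=t$. Hence $(y,y')\in B_t$ and $y\in Y_t$, so $y$ is a legitimate candidate for approximating $x$.

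It remains to bound $|xy|$, which is where the real content lies. Here $|xy|\le s_*/D<s_*$ because $D>1$. By minimality of $s_*$, no distance realized from $x$ lies in the interval $[r_0,s_*)$; since $|xy|$ is such a realized distance and is strictly smaller than $s_*$, it is forced below the interval, i.e. $|xy|<r_0$. Thus $\dist(x,Y_t)\le|xy|<r_0$. For the complementary case, in which every point of $X$ lies within distance less than $r_0$ of $x$, any $y\in Y_t$ automatically satisfies $|xy|<r_0$; here $Y_t\neq\es$ is guaranteed whenever $r_0\le\diam X$, by applying the density property to a pair realizing the diameter. Combining the two cases gives $\dist(x,Y_t)\le r_0$ for all $x$, whence $\de(t)\le\frac{D}{D-2}t$.

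I expect the main obstacle to be precisely the possible presence of a \emph{gap} in the set of distances realized from $x$: one cannot in general select a point at distance exactly $r_0$, and naively using a much farther point $x'$ would only bound $|xy|$ by the large quantity $|xx'|/D$. The device that resolves this is to choose the \emph{smallest} admissible distance $s_*\ge r_0$ and to observe that the density output automatically lands at distance $\le s_*/D<s_*$, which cannot fall inside the gap $[r_0,s_*)$ and is therefore pushed below $r_0$. This is exactly the mechanism that produces the extra factor $D$ separating the sharp-looking $t/(D-2)$ from the claimed bound $Dt/(D-2)$.
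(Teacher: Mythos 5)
Your argument is correct, and it reaches the bound by a genuinely different route from the paper's. The paper argues by contradiction at the extremal point: assuming $\de(t)>Dt/(D-2)$, it takes $x$ realizing $|xY_t|=\de(t)$ and its nearest net point $x'\in Y_t$ with $|xx'|=\de(t)$, approximates this pair by $(y,y')\in B$ with the constant $\al=1/(D-\ep)$, checks $|yy'|\ge(1-2\al)\de(t)\ge t$, and contradicts $|xY_t|=\de(t)$ via $|xy|\le\al\de(t)<\de(t)$; the strict inequality in the contradiction hypothesis supplies exactly the slack needed to absorb the $\ep$. You instead prove the pointwise estimate $\dist(x,Y_t)\le Dt/(D-2)$ directly for every $x$, replacing the extremal point of $X$ by, for each $x$, the minimal realized distance $s_\ast$ above the threshold $r_0$, and using the empty interval $[r_0,s_\ast)$ of realized distances to push the approximant below $r_0$ --- a clean direct mechanism with no contradiction. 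What the paper's version buys is generality: it uses only the raw definition of density, so it works for an arbitrary symmetric $B$ with $\dens(B)>2$, which is all the lemma assumes. Your version needs the sharpened constant $\al=1/D$ exactly: with the weakened constant $1/(D-\ep)$ one only gets $|yy'|\ge s_\ast(D-2-\ep)/(D-\ep)$, which falls just short of $t$ when $s_\ast=r_0$, so the membership $(y,y')\in B_t$ would fail. The paper justifies $\al=1/D$ only via the finiteness condition (b), which is \emph{not} among the hypotheses of this lemma; you do announce (b) as an input, so your proof is valid in every context where the lemma is actually applied (supports of spectral geometries satisfy (a), (b)), but as written it proves a slightly narrower statement than the one displayed. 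The repair is cheap and preserves your mechanism: run the gap argument with the threshold $r_0^\ep=(D-\ep)t/(D-2-\ep)$ in place of $r_0$, conclude $\dist(x,Y_t)\le r_0^\ep$ for every $\ep\in(0,D-2)$, and let $\ep\to 0$. Finally, both your proof and the paper's tacitly assume $Y_t\neq\es$ (equivalently $\de(t)<\infty$); your explicit verification of nonemptiness via a diameter-realizing pair when $r_0\le\diam X$ is in fact more careful than the original on this point.
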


\begin{proof} Assume that 
$\de(t)>Dt/(D-2)$.
Since 
$X$
is compact, there exists
$x\in X$
with
$|xY_t|=\de(t)$
and respectively
$x'\in Y_t$
with
$|xx'|=\de(t)$.
We fix 
$\ep\in(0,D-2)$
such that
$\de(t)/t\ge(D-\ep)/(D-2-\ep)$,
and approximate the points
$x$, $x'$
by a pair
$(y,y')\in B$
with
$|xy|$, $|x'y'|\le\al|xx'|$,
where
$\al=1/(D-\ep)<1/2$.
Then
$(1-2\al)\de(t)\ge t$
by the choice of
$\ep$,
and
$|yy'|\ge(1-2\al)\de(t)\ge t$.
Thus
$(y,y')\in B_t$
and
$y\in Y_t$.
Furthermore,
$|xy|\le\al\de(t)<\de(t)$,
which contradicts the condition
$|xY_t|=\de(t)$. 
\end{proof}

\begin{proof}[Proof of Theorem~\ref{thm:compact_open_hausdorff_gromov}] Let
$\de>0$.
By Lemma~\ref{lem:bounding_radius}, the set
$Y_t$
is a finite
$\de$-net 
in
$X$
for all sufficiently small
$t>0$.
It follows from the condition of the theorem that
$$0<c\le\rho_i(y,y')/|yy'|\le C$$
for all
$(y,y')\in B=\supp M$
and
$i\ge 0$.
Thus in view of Proposition~\ref{pro:bounded_deformations_bilipschitz} one can
assume that 
$Y_t$
is a 
$\de$-net 
also with respect to the Connes metrics
$d_0$, $d_i$, $i\ge 1$,
on
$X$.
Therefore, it suffices to show that
$d_i(y,y')\to d_0(y,y')$
as
$i\to\infty$
for every
$y$, $y'\in Y_t$. 

By Proposition~\ref{pro:bounded_deformation_connected}, we have
$d_i(y,y')=\ov\rho_i(y,y')$, $i\ge 0$,
where
$$\ov\rho_i(y,y')=\lim_{\tau\to 0}(\ov\rho_i)_\tau(y,y')$$
and
$(\ov\rho_i)\tau$
is an intrinsic metric on the incidence graph
$\Ga_\tau$,
constructed by the restriction
$\rho_i|\Ga_\tau$.

It follows from the condition 
$\rho_0\le\rho_i$, $i\ge 1$,
that
$\ov\rho_0\le\ov\rho_i$
and hence
$$\ov\rho_0(y,y')\le\liminf_{i\to\infty}\ov\rho_i(y,y').$$

To prove the opposite inequality, we approximate the distance
$d_0(y,y')=\ov\rho_0(y,y')$
by the distances
$(\ov\rho_0)_\tau(y,y')$
as
$\tau\to 0$,
and for every fixed 
$\tau>0$
the distance
$(\ov\rho_0)_\tau(y,y')$
by the distances
$(\ov\rho_i)_\tau(y,y')$
as 
$i\to\infty$
using the convergence
$\rho_i\to\rho_0$
in the compact-open topology. Since
$\ov\rho_i(y,y')\le(\ov\rho_i)_\tau(y,y')$,
we obtain
$$\ov\rho_0(y,y')\ge\limsup_{i\to\infty}\ov\rho_i(y,y').$$ 
\end{proof}

\subsection{Weak topology on bounded deformations}
\label{subsect:weak_topology_bounded_deform}

Let
$\cD_\rho=ds_\rho^{-1}$
be the Dirac operator associated with a deformation
$\rho$.
The compact-open topology on the set
$\Xi$
of bounded deformations (of geometry
$M$)
can be described as the topology of pointwise convergence of operators
$\cD_\rho$, $\rho\in\Xi$,
on functions with {\em finite} support. In other words,
$\rho_i\to\rho$
in the compact-open topology if and only if
$\cD_{\rho_i}(\xi)\to\cD_\rho(\xi)$
for each
$\xi\in\cH$
with finite support.

The {\em weak} topology on
$\Xi$
is defined as the topology of pointwise convergence of operators
$\cD_\rho$, $\rho\in\Xi$,
on arbitrary elements
$\xi\in\cH$.
This simultaneously defines a corresponding topology on the set 
of the associated Connes metrics
$d_\rho$.
The weak topology is finer than the compact-open one. Still, it is not clear 
whether the convergence of deformations in the former topology implies
the Hausdorff-Gromov convergence of associated Connes metrics
$d_\rho$.

\noindent
{\em Question.} Assume that the marked spectra 
$\rho_i\to\rho$
with respect to the weak topology. Is it true that the associated Connes
metrics
$d_i$
converge to
$d_0$
uniformly or at least by Hausdorff-Gromov? (See Proposition~\ref{pro:lip_topology_convergence} 
below).

\subsection{Uniform topology and the Lipshitz convergence}
\label{subsect:uniform_topology_Lipschitz}

The {\em uniform} topology on
$\Xi$
is defined as the topology of the norm convergence of operators
$\cD_\rho$, $\rho\in\Xi$.
In other words, the base of neighborhoods of a marked spectrum
$\rho_0\in\Xi$
in this topology is formed by the sets
$$U_\ep(\rho_0)=\set{\rho\in\Xi}
 {$\left|\frac{1}{\rho_0(y,y')}-\frac{1}{\rho(y,y')}\right|<\ep\quad
  \textrm{for all}\quad (y,y')\in\supp M$},$$
$\ep>0$.
Recall that the Lipschitz distance between metrics
$d$, $d'$
on
$X$
is defined as
$$|dd'|_L=\sup_{x\neq y}\left|\log\frac{d'(x,y}{d(x,y)}\right|.$$ 
The next statement follows almost immediately from definitions.

\begin{pro}\label{pro:uniform_lipschits} Let
$M=\{\cA,\cH,ds\}$
be a spectral geometry on a compact metric space
$X$.
Assume that a sequence of marked spectra
$\{\rho_i\}\sub\Xi$
of its bounded deformations converges to
$\rho_0\in\Xi$
in the uniform topology. Then for the associated Connes metrics on
$X$a
we have
$$|d_{\rho_i}d_{\rho_0}|_L\to 0\quad\textrm{as}\quad i\to\infty.$$
\qed
\end{pro}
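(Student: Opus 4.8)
The plan is to unwind the definitions and show that uniform closeness of the inverse marked spectra forces the Lipschitz distance between the associated Connes metrics to be small. The Connes metric $d_\rho$ is determined by the condition $\|[\cD_\rho,\pi(a)]\|\le 1$, which by the computation in the main construction reads $|a(y)-a(y')|\le\rho(y,y')$ for all $(y,y')\in\supp M$. Equivalently, $\|[\cD_\rho,\pi(a)]\|=\sup_{(y,y')\in B}|a(y)-a(y')|/\rho(y,y')$. So the unit ball of admissible functions for $d_\rho$ is governed pointwise by the function $1/\rho$, and uniform convergence $\rho_i\to\rho_0$ is exactly uniform convergence of the multipliers $1/\rho_i\to 1/\rho_0$ on $B$.

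First I would translate the uniform-topology hypothesis into a two-sided multiplicative comparison of the spectra. Given $\ep>0$, for large $i$ we have $|1/\rho_i(y,y')-1/\rho_0(y,y')|<\ep$ for all $(y,y')\in\supp M$. Since the deformations are bounded, $1/\rho_0$ is bounded above (say by $1/c$ for some $c>0$ with $\rho_0\ge c|yy'|$ and $X$ of finite diameter), so this additive bound converts to a multiplicative one: there is $\eta_i\to 0$ with
\[
e^{-\eta_i}\le\frac{\rho_i(y,y')}{\rho_0(y,y')}\le e^{\eta_i}\quad\text{for all }(y,y')\in\supp M.
\]
The key point is that the ratio $\rho_i/\rho_0$ is pinched uniformly over all of $B$ near $1$.

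Next I would push this uniform ratio bound through to the Connes metrics. If $\|[\cD_{\rho_0},\pi(a)]\|\le 1$, meaning $|a(y)-a(y')|\le\rho_0(y,y')$ for all $(y,y')\in B$, then $|a(y)-a(y')|\le e^{\eta_i}\rho_i(y,y')$, so the rescaled function $e^{-\eta_i}a$ satisfies $\|[\cD_{\rho_i},\pi(e^{-\eta_i}a)]\|\le 1$. Taking suprema in the definition of the Connes metric yields $d_{\rho_0}(x,x')\le e^{\eta_i}d_{\rho_i}(x,x')$ for all $x\ne x'$, and the symmetric argument gives $e^{-\eta_i}d_{\rho_i}(x,x')\le d_{\rho_0}(x,x')\le e^{\eta_i}d_{\rho_i}(x,x')$. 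Taking logarithms and the supremum over $x\ne x'$ gives $|d_{\rho_i}d_{\rho_0}|_L\le\eta_i\to 0$, which is the assertion.

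I expect no serious obstacle here—this is the ``almost immediate'' statement the text advertises. The only point requiring a line of care is the passage from the additive estimate $|1/\rho_i-1/\rho_0|<\ep$ to the multiplicative/logarithmic one: one must use that the boundedness of the deformations keeps $1/\rho_0$ away from $0$ and $\infty$ uniformly on $B$ (equivalently $\rho_0/|yy'|$ and $|yy'|/\rho_0$ are bounded), so that a small additive perturbation of $1/\rho_0$ is a small multiplicative perturbation of $\rho_0$, uniformly in $(y,y')$. Once that uniform pinching is in hand, the transfer to $|\cdot|_L$ is a direct consequence of the definition of the Connes metric as a supremum over the admissible unit ball and the homogeneity of that ball under scaling $a\mapsto\la a$.
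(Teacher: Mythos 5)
Your proof is correct in substance and is precisely the definitional argument the paper intends: the text offers no written proof of Proposition~\ref{pro:uniform_lipschits} beyond the remark that it ``follows almost immediately from definitions'', and your two steps --- converting uniform closeness of $1/\rho_i$ to $1/\rho_0$ into a uniform multiplicative pinching of $\rho_i/\rho_0$, then rescaling admissible functions $a\mapsto e^{-\eta_i}a$ to compare the suprema defining the Connes metrics --- supply exactly that computation. One justification, however, is stated backwards. You assert that boundedness of the deformation makes $1/\rho_0$ ``bounded above'' and ``away from $0$ and $\infty$ uniformly on $B$''; in general $1/\rho_0$ is \emph{unbounded} above, since the lower bound $\rho_0(y,y')\ge c|yy'|$ only gives $1/\rho_0(y,y')\le 1/(c|yy'|)$, and $|yy'|\to 0$ along an infinite support $B$. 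What your additive-to-multiplicative conversion actually needs --- and what is true --- is the bound in the opposite direction: $\rho_0(y,y')\le C|yy'|\le C\diam X$, so $1/\rho_0\ge 1/(C\diam X)>0$ uniformly on $B$, whence also $1/\rho_i\ge 1/(2C\diam X)$ for all sufficiently large $i$ by the uniform convergence. Writing $\ep_i=\sup_{(y,y')\in B}\left|1/\rho_i(y,y')-1/\rho_0(y,y')\right|$, this yields
$$\left|\frac{\rho_i(y,y')}{\rho_0(y,y')}-1\right|
  =\frac{\left|1/\rho_0(y,y')-1/\rho_i(y,y')\right|}{1/\rho_i(y,y')}
  \le 2C\diam X\cdot\ep_i\to 0,$$
which is exactly the pinching $e^{-\eta_i}\le\rho_i/\rho_0\le e^{\eta_i}$ you invoke; with that correction, the remainder of your argument (the scaling of the admissible unit ball, using $\|[\cD_\rho,\pi(a)]\|=\sup_{(y,y')\in B}|a(y)-a(y')|/\rho(y,y')$, and the passage to $|d_{\rho_i}d_{\rho_0}|_L\le\eta_i$) is sound as written.
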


\subsection{Pertubations of degenerations}
\label{subsect:pertubations_degenerations}

A deformation
$ds_\rho$
of a spectral geometry
$M$
is said to be {\em regular,} if
$\rho(y,y')=\ov\rho(y,y')$
for all 
$(y,y')\in\supp M$
(see sect.~\ref{subsect:bounded_deformations_connected}). 
To a given deformation
$\rho$
it canonically corresponds a regular deformation
$\ov\rho$.
A deformation
$\rho$
is said to be {\em simple,} if for every
$(y,y')$, $(z,z')\in\supp M$
the condition
$\rho(y,y')=\rho(z,z')$
implies
$(y,y')=(z,z')$
or
$(y,y')=(z',z)$.

\begin{lem}\label{lem:approximation_simple_regular} Every spectral geometry
$M=\{\cA,\cH,ds\}$
on a compact metric space
$X$
can be approximated in the uniform topology by its
simple regular deformations.
\end{lem}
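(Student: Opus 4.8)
The plan is to exploit three facts: the target deformation $\rho_M=|\cdot|$ (the marked spectrum of $M$ itself, $\rho_M(y,y')=|yy'|$) is already regular; the regularization map $\rho\mapsto\ov\rho$ is a retraction onto regular deformations that displaces points only a controlled amount in the uniform topology; and simplicity is a generic condition that can be bought by a generic choice of perturbation. First I would record that $\rho_M$ is regular: for $(y,y')\in B$ and any edge-path $y=y_0,\dots,y_k=y'$ in the incidence graph, the triangle inequality in $X$ gives $\sum_i|y_iy_{i+1}|\ge|yy'|$, so $\ov{\rho_M}(y,y')=|yy'|=\rho_M(y,y')$. Thus $M$ lies in the regular stratum of $\Xi$, and approaching it through regular deformations is reasonable.

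Next, for a small perturbation bound $\de>0$ I would perturb the eigenvalues: choose $\eta=(\eta_e)_{e\in B}$ with $|\eta_e|<\de$, define $\si$ by $1/\si(e)=1/|e|+\eta_e$, and set $\rho=\ov\si$. Since $\ov{\ov\si}=\ov\si$, this $\rho$ is automatically regular, so only simplicity remains at issue. For closeness I would bound $\ov\si$ two-sidedly. From $\ov\si(e)\le\si(e)\le|e|/(1-\de|e|)$ one controls $1/\rho$ from below. For the lower bound, using $\si(f)\ge|f|(1-\de|f|)$ and restricting to near-minimizing paths $P$ (only paths with $\sum_{f\in P}|f|\le 2|e|$ can be near-optimal, whence $\sum_{f\in P}|f|^2\le 4|e|^2$) one gets $\ov\si(e)\ge|e|(1-4\de|e|)$. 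Together these give $\sup_B|1/\rho-1/|\cdot||\le C\de$ and $\rho(e)/|e|$ bounded away from $0$ and $\infty$, so $\rho$ is a bounded deformation as uniformly close to $M$ as desired.

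The heart of the argument is that for a generic $\eta$ the regularized values $\ov\si(e)$ are pairwise distinct on unordered edges. I would work at finite level, $\ov\si(e)=\lim_{t\to0}(\ov\si)_t(e)$, where $(\ov\si)_t$ is the intrinsic metric of the finite graph $\Ga_t$, for which a shortest path exists. For a fixed pair of distinct unordered edges $e\ne e'$, the coincidence set $\set{\eta}{$(\ov\si)_t(e)=(\ov\si)_t(e')$}$ is closed by continuity of $(\ov\si)_t$ in $\eta$ and has empty interior: the endpoint sets of $e$ and $e'$ differ, so no path joining the endpoints of $e$ joins those of $e'$, and consequently some edge $f$ occurs on a shortest path of one but on no shortest path of the other; decreasing $\eta_f$ slightly strictly decreases exactly one of $(\ov\si)_t(e),(\ov\si)_t(e')$ and separates them. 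Running this over the countably many pairs and levels and invoking the Baire category theorem in the product of the intervals $(-\de,\de)$ produces a residual set of admissible $\eta$ for which all finite-level, hence all limit, values $\ov\si(e)$ are distinct. Fixing such an $\eta$ makes $\rho=\ov\si$ simple, and letting $\de\to0$ yields the approximation.

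\textbf{Main obstacle.} The delicate step is precisely this genericity argument, where two points require care. Minimizing paths for $\ov\si(e)$ need not be attained in the full graph (only at each finite level $\Ga_t$), so the nowhere-density argument must be organized level by level and then passed to the limit. Moreover, at a given level a pair of vertices may admit several competing shortest paths; one must therefore select the separating edge $f$ in the symmetric difference of the two \emph{families} of minimizers, not of single minimizers, and choose the sign of the perturbation so that the selected path stays optimal under the perturbation. Once these points are settled, the three ingredients---$\rho_M$ regular, $\ov{\,\cdot\,}$ a short retraction onto regular deformations, and generic distinctness of $\ov\si$---combine to give, for every prescribed precision, a simple regular deformation arbitrarily close to $M$ in the uniform topology.
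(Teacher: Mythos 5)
Your argument is sound up to and including the regularization step: $\rho_M=|\cdot|$ is regular by the triangle inequality, the map $\si\mapsto\ov\si$ is idempotent, and your two-sided bound $|e|(1-4\de|e|)\le\ov\si(e)\le|e|/(1-\de|e|)$ correctly shows that $\rho=\ov\si$ is a bounded deformation uniformly close to $M$. The genuine gap is the step you yourself flag as the ``main obstacle,'' and it is not a technicality to be settled later --- it is exactly where simplicity, the crux of the lemma, lives. Your Baire argument only controls the finite-level quantities $(\ov\si)_t(e)$. At a fixed level $t$ the coincidence set $\{\eta:\,(\ov\si)_t(e)=(\ov\si)_t(e')\}$ is indeed closed and nowhere dense (though your separating-edge argument needs repair: a cleaner route is that minimizers in the finite graph $\Ga_t$ are simple paths, a simple path's edge set determines its endpoints, so the path-length functionals of $e$ and of $e'$ are pairwise distinct linear forms in $(\si(f))_{f\in B_t}$, and two finite minima over disjoint families of distinct linear forms cannot agree on an open set). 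But the inference ``all finite-level, hence all limit, values $\ov\si(e)$ are distinct'' is false: $\ov\si(e)=\lim_{t\to 0}(\ov\si)_t(e)$ is a decreasing limit, and two sequences distinct at every level can have equal limits. To conclude you would need the sets $\{\eta:\,\ov\si(e)=\ov\si(e')\}$ for the \emph{limit} values to be nowhere dense, and there your mechanism breaks down: the infimum runs over infinitely many paths, minimizers need not exist, and the ``edge in the symmetric difference of the minimizer families'' has no analogue, so no coordinate perturbation is shown to separate the two infima. Since simplicity is a condition on $\rho=\ov\si$, i.e.\ on the limit values, the proposal does not prove it.

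The paper sidesteps both the regularization map and genericity, and this also shows how to close your gap. Since $B_t$ is finite for every $t>0$, enumerate $B=\{z_1,z_2,\dots\}$ (as unordered pairs) with $\rho(z_{i+1})\le\rho(z_i)$, where $\rho(z)=|z|$, and set $\rho_h(z_i)=\rho(z_i)-h\ep_i$ for $h\in(0,1)$, with $\ep_i>0$ chosen so that $\ep_i<\rho(z_i)^2/(1+\rho(z_i))$ (giving $|1/\rho_h(z_i)-1/\rho(z_i)|<h$, hence uniform closeness), $\sum_{i>j}\ep_i<\ep_j$, and $\ep_k$ smaller than the spectral gap $\rho(z_i)-\rho(z_{i+1})$ at each drop, where $k$ is the first index of the level set. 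The rapid decay makes the values $\rho_h(z_i)$ pairwise distinct \emph{deterministically} --- within a level set because the $\ep_i$ are strictly decreasing, across level sets by the gap condition --- so simplicity holds outright, with no Baire argument and no limit to pass through. Regularity is likewise preserved directly: the strict triangle inequalities underlying $\ov\rho=\rho$ survive the decrements because the total loss $h\sum_j\ep_{i_j}$ along any competing path is dominated, via $\sum_{i>j}\ep_i<\ep_j$, by the decrement of the direct edge. In short, you tried to buy simplicity generically after regularizing, which founders on the non-attained infimum; the paper buys it explicitly before any limit is taken.
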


\begin{proof} Given
$z\in B=\supp M$
we put
$\rho(z)=|z|$.
Obviously,
$\ov\rho=\rho$,
i.e., the geometry
$M$
itself is regular. The support
$B$
can be represented as a sequence
$B=\{z_1,z_2,\dots\}$
in such a way that
$\rho(z_{i+1})\le\rho(z_i)$
for every
$i\ge 1$.
We fix a positive sequence
$\{\ep_i\}$
satisfying the following conditions
\begin{itemize}
 \item [(1)] $\ep_i<\rho(z_i)^2/(1+\rho(z_i))$
for all
$i\ge 1$;
 \item [(2)] $\sum_{i>j}\ep_i<\ep_j$
for all 
$j\ge 1$;
 \item [(3)] if
$\rho(z_{i+1})<\rho(z_i)$,
then
$\ep_k<\rho(z_i)-\rho(z_{i+1})$
for 

$k=\min\set{j}{$\rho(z_j)=\rho(z_i)$}$.
\end{itemize}
Given
$h\in(0,1)$
and
$i\ge 1$
we put
$\rho_h(z_i)=\rho(z_i)-h\ep_i$.
Then
$\rho_h(z_i)>0$
since
$\ep_i<\rho(z_i)$.
Furthermore,
$$\frac{1}{\rho_h(z_i)}-\frac{1}{\rho(z_i)}=
   \frac{h\ep_i}{\rho(z_i)(\rho(z_i)-h\ep_i)}<h$$
by condition (1). Thus
$\rho_h\to\rho$
as
$h\to 0$
in the uniform topology. It follows from the regularity of
$\rho$
and condition~(2) that deformations
$\rho_h$
are regular for all
$h\in(0,1)$.
Conditions~(2) and (3) imply that all deformations
$\rho_h$
are simple. 
\end{proof}

\section{Dimension of spectral geometry}
\label{sect:dimension_spectral_geometry}

\subsection{Definition and properties of dimension of spectral geometries}
\label{subsect:def_properties_dimension}

The dimension of a spectral geometry is defined in \cite{Bu}
via the notion of the Dixmier trace
$\tr_\om$
(see \cite{Di},\cite{Co}). Let
$M=\{\cA,\cH,ds\}$
be a spectral geometry on a compact metric space
$X$.
Its {\em dimension} is defined as
$$\dim M:=\sup\set{p\ge 0}{$\tr_\om(|ds|^p)=\infty$}
         =\inf\set{p\ge 0}{$\tr_\om(|ds|^p)=0$}.$$
The following lemma shows that this notion is well defined.

\begin{lem}\label{lem:dimension_spectral_geom} Let
$M=\{\cA,\cH,ds\}$
be a spectral geometry on a compact metric space
$X$.
Put
$p_-=\sup\set{p\ge 0}{$\tr_\om(|ds|^p)=\infty$}$,
$p_+=\inf\set{p\ge 0}{$\tr_\om(|ds|^p)=0$}$.
Then
$p_-=p_+$. 
\end{lem}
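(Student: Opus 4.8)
The plan is to reduce everything to the eigenvalue sequence of $|ds|$ and then exploit two features: the monotonicity of $p\mapsto\tr_\om(|ds|^p)$, which already forces $p_-\le p_+$, and an elementary convexity estimate for partial sums of $p$-th powers, which closes the gap in the other direction. First I would record the spectral data. Since $ds$ is compact, self-adjoint and injective with eigenvalues $\pm|yy'|$, $(y,y')\in B$, the operator $|ds|$ is positive and compact with eigenvalues $\mu_1\ge\mu_2\ge\cdots>0$, $\mu_n\to 0$ (listed with multiplicity; if $X$ is finite the sequence terminates, every $\tr_\om(|ds|^p)$ vanishes and $p_-=p_+=0$, so assume it infinite). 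Writing $S_N(p)=\sum_{n=1}^N\mu_n^p$, the Dixmier trace is $\tr_\om(|ds|^p)=\lim_\om\frac{1}{\log N}S_N(p)$, with the convention that this value is $+\infty$ precisely when $|ds|^p\notin\cL^{1,\infty}$, i.e. when $\sup_N\frac{1}{\log N}S_N(p)=\infty$.

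Next I would establish monotonicity. For $p<q$ one has $\mu_n^p\ge\mu_n^q$ for all $n$ beyond the finitely many indices with $\mu_n>1$, so $S_N(p)\ge S_N(q)-C_0$ with a constant $C_0$ independent of $N$. Dividing by $\log N$ and applying $\lim_\om$ (which is monotone and annihilates $C_0/\log N$) gives $\tr_\om(|ds|^p)\ge\tr_\om(|ds|^q)$; the same inequality of partial sums shows that if $|ds|^q\notin\cL^{1,\infty}$ then $|ds|^p\notin\cL^{1,\infty}$. Hence $f(p):=\tr_\om(|ds|^p)$ is non-increasing on $[0,\infty)$, so $\{p:\,f(p)=\infty\}$ is an initial segment and $\{p:\,f(p)=0\}$ a final segment; these sets are disjoint and correctly ordered, which yields $p_-\le p_+$.

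The main step is the reverse inequality $p_+\le p_-$. Suppose it fails and pick $p_-<p<q<p_+$; then $p>0$ and, since $p>p_-$, we have $f(p)<\infty$, i.e. $|ds|^p\in\cL^{1,\infty}$, so $S_N(p)\le A\log N$ for all $N$ and some $A$. Because $(\mu_n)$ is non-increasing, $n\mu_n^p\le S_n(p)\le A\log n$, whence $\mu_n^q=(\mu_n^p)^{q/p}\le(A\log n/n)^{q/p}$ for $n\ge 2$. As $q/p>1$, the series $\sum_n(\log n)^{q/p}n^{-q/p}$ converges, so $\sum_n\mu_n^q<\infty$; thus $|ds|^q$ is trace class and $\tr_\om(|ds|^q)=0$. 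But $q<p_+$ forces $f(q)\ne 0$, a contradiction. Therefore $p_+\le p_-$, and with the previous step $p_-=p_+$.

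The step I expect to be delicate is the passage from finiteness of $\tr_\om(|ds|^p)$ to the genuine two-sided bound $S_N(p)=O(\log N)$ valid for \emph{all} $N$: a priori $\lim_\om$ controls the quotient only along the mean $\om$. The clean resolution is the standard convention that $\tr_\om$ is declared $+\infty$ off the ideal $\cL^{1,\infty}$, so that $f(p)<\infty$ is synonymous with $|ds|^p\in\cL^{1,\infty}$ and hence with the uniform bound; the power-sum estimate above is then routine. If one insists instead on evaluating $\lim_\om$ on possibly unbounded sequences, one replaces the uniform bound by $\liminf\le\lim_\om\le\limsup$ and runs the same computation along a subsequence realising the $\liminf$ of $S_N(p)/\log N$, which needs a little more bookkeeping but gives the same conclusion.
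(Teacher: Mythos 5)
Your proof is correct, and while it shares the paper's overall skeleton --- reduce to the eigenvalue sequence $\mu_1\ge\mu_2\ge\dots$ of $|ds|$, note $p_-\le p_+$ by monotonicity, then derive a contradiction from two exponents $p<q$ strictly inside a putative gap using the bound $S_N(p)\le A\log N$ --- the closing estimate is genuinely different. The paper argues softly: since $\mu_n\to 0$, one has $\mu_n^{q-p}<\ep$ for $n\ge n_0$, hence $\si_N(|ds|^{q})\le\const+\ep\,\si_N(|ds|^{p})\le\ep(1+C)\log N$ for large $N$, and letting $\ep\to 0$ gives $\tr_\om(|ds|^{q})=0$; this uses only the convergence $\mu_n\to 0$ and stays at the level of Dixmier traces. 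You instead extract pointwise eigenvalue decay via the Chebyshev-type inequality $n\mu_n^p\le S_n(p)\le A\log n$, conclude $\mu_n^q\le(A\log n/n)^{q/p}$ with $q/p>1$, and deduce that $|ds|^q$ is actually trace class, hence $\tr_\om(|ds|^q)=0$. Your conclusion is strictly stronger: it yields not just vanishing Dixmier trace but summability of $(\mu_n^q)$ for every $q>p_-$, which is precisely what the paper proves separately as Lemma~\ref{lem:finite_trace} (there for $p>\dim M$, by essentially your counting argument applied to $\dim E_t$); so your route proves Lemma~\ref{lem:dimension_spectral_geom} and subsumes Lemma~\ref{lem:finite_trace} in one pass, at the modest cost of needing $p>0$ to form $q/p$ (which you correctly secure from $p>p_-\ge 0$). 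Finally, the delicate point you flag --- that finiteness of $\tr_\om(|ds|^p)$ must be read as membership in $\cL^{1,\infty}$ so that $S_N(p)=O(\log N)$ holds for all $N$ --- is exactly the convention the paper uses implicitly when it writes $\si_N(|ds|^p)\le C\log N$ for all sufficiently large $N$, so your treatment of it is faithful and, by making it explicit, an improvement.
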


\begin{proof} Obviously,
$p_-\le p_+$.
Suppose that 
$p_-<p_+$.
Then
$$0<\tr_\om(|ds|^p)<\infty$$
for every
$p\in(p_-,p_+)$.
Let
$\mu_0\ge\mu_1\ge\dots$
be the sequence of eigenvalues of
$|ds|$,
written in nondecreasing order according to multiplicity. We choose 
$p$, $p'\in(p_-,p_+)$, $p'>p$,
and fix 
$\ep>0$.
There is
$n_0>0$
such that
$\mu_n^{p'-p}<\ep$
for all
$n\ge n_0$
and, furthermore,
$\si_N(|ds|^p)=\sum_{n=0}^{N-1}\mu_n^p\le C\log N$
for all sufficiently large 
$N$,
where the constant
$C$
does not depend on
$N$.
Then
$$\si_N(|ds|^{p'})\le\sum_{n=0}^{n_0-1}\mu_n^{p'}+\ep\sum_{n=n_0}^{N-1}\mu_n^p\le\ep(1+C)\log N$$
for all sufficiently large
$N$.
Thus
$\tr_\om(|ds|^{p'})=0$.
This contradicts the assumption
$p'<p_+$.
\end{proof}

Let
$E_t$
be the maximal subspace in
$\cH$
with
$\|ds^{-1}|E_t\|\le t^{-1}$.
In the case when the multiplicity function
$m\equiv 1$,
we have
$\dim E_t=|B_t|$.
The following proposition is proved in \cite{Bu}.

\begin{pro}\label{pro:various_dimensions} The dimension
$\dim M$
coincides with
$$\dim_2M=\limsup_{t\to 0}\frac{\log\dim E_t}{\log1/t}\ \textrm{and}\  
  \dim_3M=\inf\set{p\ge 0}{$\tr_\om(|ds|^p)<\infty$}.$$
\qed     
\end{pro}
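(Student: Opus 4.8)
The plan is to express all three quantities through the nonincreasing sequence $\mu_0\ge\mu_1\ge\dots$ of eigenvalues of $|ds|$ (listed with multiplicity) and its counting function, and then to read off the coincidences from elementary estimates on the Dixmier partial sums $\si_N(|ds|^p)=\sum_{n=0}^{N-1}\mu_n^p$. First I would record the description of $E_t$: the condition $\|ds^{-1}|E_t\|\le t^{-1}$ selects exactly the span of those eigenvectors of $|ds|$ whose eigenvalue satisfies $\mu_n\ge t$, so that $\dim E_t=N(t):=\#\{n:\ \mu_n\ge t\}$, which for $m\equiv 1$ reduces to $|B_t|$ as already noted. I would also use the standard dichotomy for the Dixmier trace of a positive compact operator $T$ with partial sums $\si_N(T)$: one has $\tr_\om(T)=0$ when $\si_N(T)=o(\log N)$, $\tr_\om(T)$ is finite when $\si_N(T)=O(\log N)$, and $\tr_\om(T)=\infty$ when $\si_N(T)/\log N$ is unbounded.

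The identity $\dim_3M=\dim M$ is then immediate from Lemma~\ref{lem:dimension_spectral_geom}: writing $p_-=p_+=\dim M$, for every $p>\dim M$ we have $\tr_\om(|ds|^p)=0<\infty$, while for every $p<\dim M$ we have $\tr_\om(|ds|^p)=\infty$; hence $\inf\{p\ge 0:\ \tr_\om(|ds|^p)<\infty\}=\dim M$.

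For the equality $\dim_2M=\dim M$ I would prove two inequalities. To get $\dim M\le\dim_2M$, fix $p>\dim_2M$ and choose $p'$ with $\dim_2M<p'<p$; then $N(t)\le t^{-p'}$ for all small $t$, and the layer-cake identity $\sum_n\mu_n^p=p\int_0^\infty N(t)\,t^{p-1}\,dt$ shows the integrand is dominated near $0$ by $t^{p-p'-1}$, which is integrable since $p-p'>0$. Thus $\sum_n\mu_n^p<\infty$, the sums $\si_N(|ds|^p)$ are bounded, and $\tr_\om(|ds|^p)=0$; letting $p\downarrow\dim_2M$ gives $\dim M=p_+\le\dim_2M$. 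For the reverse inequality, fix $p<\dim_2M$ and choose $q$ with $p<q<\dim_2M$. By definition of the $\limsup$ there is a sequence $t_k\to 0$ with $N(t_k)\ge t_k^{-q}$, and keeping only the eigenvalues $\mu_n\ge t_k$ gives $\si_{N(t_k)}(|ds|^p)\ge t_k^{\,p}N(t_k)\ge t_k^{-(q-p)}$, which grows polynomially in $1/t_k$. Since $\dim_2M<\infty$ forces $N(t_k)\le t_k^{-(\dim_2M+1)}$ for large $k$, we have $\log N(t_k)=O(\log(1/t_k))$, so $\si_{N(t_k)}(|ds|^p)/\log N(t_k)\to\infty$. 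Hence $\tr_\om(|ds|^p)=\infty$ and $p\le p_-=\dim M$; letting $p\uparrow\dim_2M$ yields $\dim_2M\le\dim M$.

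The delicate point, and the step I expect to be the main obstacle, is the lower bound $\dim_2M\le\dim M$: mere divergence of $\sum_n\mu_n^p$ does not force $\tr_\om(|ds|^p)=\infty$, since it only yields $\si_N\to\infty$, possibly slower than $\log N$. The crux is to upgrade this to genuinely super-logarithmic (indeed polynomial) growth of $\si_N$ along the subsequence $N(t_k)$ supplied by the counting-function lower bound, and to pair it with the matching bound $\log N(t_k)=O(\log(1/t_k))$ coming from the finiteness of $\dim_2M$. Finally I would remark that the degenerate cases (a finite space $X$, or $\dim_2M=\infty$) are covered by the same inequalities, the latter by simply omitting the upper estimate on $N(t_k)$.
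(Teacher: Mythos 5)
The first thing to say is that the paper contains no proof of this proposition to compare against: it is stated with a \qed and the sentence ``The following proposition is proved in \cite{Bu}'', so your argument can only be measured against the paper's surrounding conventions, and on that score it is correct and self-contained. The identification $\dim E_t=N(t)=\#\{n:\ \mu_n\ge t\}$ is right; the $\dim_3$ equality does follow from Lemma~\ref{lem:dimension_spectral_geom}, modulo the elementary (and worth one line) monotonicity remark that, since all but finitely many $\mu_n$ are $<1$, the property $\tr_\om(|ds|^p)=0$ propagates upward in $p$ and $\tr_\om(|ds|^p)=\infty$ propagates downward; the layer-cake estimate correctly gives $\sum_n\mu_n^p<\infty$, hence $\tr_\om(|ds|^p)=0$, for every $p>\dim_2M$; and your lower bound correctly isolates the genuine issue, namely that divergence of $\sum_n\mu_n^p$ alone does not force $\tr_\om(|ds|^p)=\infty$, so one must exhibit super-logarithmic growth of $\si_N$. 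Your dichotomy ``$\si_N/\log N$ unbounded $\Rightarrow\tr_\om=\infty$'' is exactly the convention the paper itself relies on in the proof of Lemma~\ref{lem:dimension_spectral_geom}, where finiteness of $\tr_\om(|ds|^p)$ is used to conclude $\si_N(|ds|^p)\le C\log N$ for large $N$; so there is no mismatch with the text. There is also no circularity: you use only Lemma~\ref{lem:dimension_spectral_geom}, which is proved independently of the proposition.

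One small patch is needed. Your closing claim that the case $\dim_2M=\infty$ is covered ``by simply omitting the upper estimate on $N(t_k)$'' is not right as stated: without any control on $\log N(t_k)$, the bound $\si_{N(t_k)}(|ds|^p)\ge t_k^{-(q-p)}$ by itself says nothing about the ratio $\si_{N(t_k)}/\log N(t_k)$. The fix is immediate and in fact streamlines your main case too: writing $N_k=N(t_k)\ge t_k^{-q}$ with $0<p<q$ (the inequality $\dim_2M\le\dim M$ being trivial when $\dim_2M=0$), one has $t_k\ge N_k^{-1/q}$, hence
$$\si_{N_k}(|ds|^p)\ \ge\ N_k\,t_k^{\,p}\ \ge\ N_k^{\,1-p/q},$$
with $1-p/q>0$ and $N_k\to\infty$, so $\si_{N_k}/\log N_k\to\infty$ with no upper estimate on $N(t_k)$ whatsoever; this treats $\dim_2M$ finite and infinite uniformly and lets you delete the auxiliary bound $N(t_k)\le t_k^{-(\dim_2M+1)}$ altogether.
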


Furthermore, we have

\begin{lem}\label{lem:finite_trace} For every
$p>\dim M$
the trace of the operator
$|ds|^p$
is finite,
$$\tr_\om(|ds|^p)<\infty.$$
\end{lem}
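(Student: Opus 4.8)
The plan is to reduce the statement to the dimension characterization $\dim_3 M$ already furnished by Proposition~\ref{pro:various_dimensions}. Recall that
$$\dim_3 M=\inf\set{p\ge 0}{$\tr_\om(|ds|^p)<\infty$},$$
and that Proposition~\ref{pro:various_dimensions} asserts $\dim M=\dim_3 M$. So the content of the lemma is the assertion that the infimum defining $\dim_3 M$ is actually attained as a threshold from the right: for every $p$ strictly above the infimum, finiteness holds.

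First I would observe that by definition of $\dim_3 M$ as an infimum, for any $p>\dim M=\dim_3 M$ there exists some $p'$ with $\dim M\le p'<p$ and $\tr_\om(|ds|^{p'})<\infty$. The crux is then a monotonicity-type argument: passing from exponent $p'$ to the larger exponent $p$ should only shrink the trace. Concretely, let $\mu_0\ge\mu_1\ge\cdots$ be the eigenvalues of $|ds|$ listed with multiplicity in nonincreasing order; since $ds$ is compact, $\mu_n\to 0$, so $\mu_n\le 1$ for all large $n$ and hence $\mu_n^{p}\le\mu_n^{p'}$ eventually. I would then run the same Cesàro-type estimate as in the proof of Lemma~\ref{lem:dimension_spectral_geom}: finiteness of $\tr_\om(|ds|^{p'})$ gives a bound $\si_N(|ds|^{p'})=\sum_{n=0}^{N-1}\mu_n^{p'}\le C\log N$ for all large $N$, and since $\mu_n^{p}\le\mu_n^{p'}$ for $n$ large, one obtains $\si_N(|ds|^{p})\le C'\log N$ as well, whence $\tr_\om(|ds|^{p})<\infty$.

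The main obstacle will be handling the definition of $\tr_\om$ carefully, since the Dixmier trace is not literally given by $\sup_N \si_N/\log N$ but by a dilation-invariant functional $\om$ applied to the sequence $\bigl(\si_N(|ds|^p)/\log N\bigr)_N$. Thus "finiteness of the Dixmier trace" should be tied to boundedness of this sequence, and I would argue that $\tr_\om(|ds|^{p'})<\infty$ forces $\si_N(|ds|^{p'})/\log N$ to be bounded (this is exactly the input used implicitly in Lemma~\ref{lem:dimension_spectral_geom}), after which the inequality $\si_N(|ds|^p)\le\si_N(|ds|^{p'})+\const$ for large $N$ transfers boundedness to exponent $p$ and hence keeps $\tr_\om$ finite.

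In fact the cleanest route is to note that the inequality chain is already essentially carried out inside the proof of Lemma~\ref{lem:dimension_spectral_geom}: there, choosing $p'>p$ both in $(p_-,p_+)$ one shows $\si_N(|ds|^{p'})\le\ep(1+C)\log N$. The present lemma needs only the weaker conclusion that $\si_N(|ds|^{p})$ stays $O(\log N)$, so I would simply reuse that estimate with $\ep=1$ (dropping the vanishing conclusion) to secure boundedness, and conclude $\tr_\om(|ds|^{p})<\infty$ for all $p>\dim M$. This makes the proof a short corollary of the already-established machinery rather than a fresh computation.
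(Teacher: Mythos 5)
Your proof is correct, but it takes a genuinely different route from the paper's. You argue from the $\dim_3$ characterization: pick $p'$ with $\dim M\le p'<p$ and $\tr_\om(|ds|^{p'})<\infty$ (possible by definition of the infimum), then transfer the bound $\si_N(|ds|^{p'})\le C\log N$ to the larger exponent using $\mu_n\le 1$ eventually, so that $\si_N(|ds|^{p})\le\si_N(|ds|^{p'})+\const$, and conclude $\tr_\om(|ds|^{p})<\infty$. The paper instead argues from the $\dim_2$ (eigenvalue-counting) characterization: with $d=\dim M<p-\ep$ it bounds $\dim E_t\le t^{-(d+\ep)}$, converts this to the pointwise decay $\mu_n^p\le(n-C)^{-p/(d+\ep)}$ with exponent $p/(d+\ep)>1$, and concludes that the series $\sum_n\mu_n^p$ converges --- i.e., the \emph{ordinary} trace $\tr(|ds|^p)$ is finite (so in fact $\tr_\om(|ds|^p)=0$). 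Your conclusion $\si_N(|ds|^p)=O(\log N)$ is strictly weaker (consider $\mu_n^p=1/n$), though it does suffice for the lemma as literally stated; the difference matters downstream, because the proof of Proposition~\ref{pro:lip_topology_convergence} invokes this lemma in the form $\tr(|ds|^p)<\infty$, which your argument does not yet deliver. It can be upgraded inside your framework: once $\si_N(|ds|^{p''})\le C\log N$ for some $\dim M<p''<p$, monotonicity of the eigenvalues gives $\mu_{N-1}^{p''}\le C\log N/N$, hence $\mu_{N-1}^{p}\le(C\log N/N)^{p/p''}$ with $p/p''>1$, and the series converges. Two smaller remarks: your reliance on the convention that $\tr_\om(T)<\infty$ for positive $T$ forces boundedness of $\si_N(T)/\log N$ is legitimate --- the paper uses exactly this step inside Lemma~\ref{lem:dimension_spectral_geom}; on the other hand, your closing suggestion to ``reuse the estimate for $p,p'\in(p_-,p_+)$'' cannot be taken literally, since that interval is empty (that is precisely what Lemma~\ref{lem:dimension_spectral_geom} proves) --- fortunately your main comparison argument is self-contained and does not need it.
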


\begin{proof} As usual, let
$\mu_0\ge\mu_1\ge\dots$
be the sequence of eigenvalues of the operator
$|ds|$
written in nondecreasing order according to multiplicity. By
Proposition~\ref{pro:various_dimensions}, we have
$$\dim M=\limsup_{t\to 0}\frac{\log\dim E_t}{\log t^{-1}}.$$
Take
$\ep>0$
such that
$d=\dim M<p-\ep$.
Then for every sufficiently small
$t>0$
we have
$\dim E_t\le t^{-(d+\ep)}$,
where
$\dim E_t=\sum_{(y,y')\in B_t}m(y,y')$
coincides with the number of eigenvalues (with multiplicity) of
$|ds|$,
which
$\ge t$.
Therefore, if
$\mu_n=t$,
then
$n\le t^{-(d+\ep)}+C$
and
$\mu_n^p\le (n-C)^{-p/(d+\ep)}$,
where the constant
$C$
does not depend on
$n$.
Hence, the claim. 
\end{proof}

\subsection{Dixmier trace and Hausdorff measures}
\label{subsect:dixmier_trace_hausdorff_measures}

Let
$M=\{\cA,\cH,ds\}$
be a spectral geometry on
$X$, $B\sub X^2\sm\De$
its support. As usual, for 
$t>0$
we put
$B_t=\set{b\in B}{$|b|\ge t$}$, $Y_t\sub X$
the projection of
$B_t$
on the factor
$X$, $\de(t)$
the bounding radius of
$Y_t$
(see sect.~\ref{subsect:bounded_deformations_hausdorff_gromov}).
The {\em relative bounding radius} of the support
$B$
of the geometry
$M$
is defined as
$$\nu=\nu(B)=\limsup_{t\to 0}\de(t)/t.$$
In what follows we assume that
$\nu<\infty$.
This condition is fulfilled if, for example,
$D=\dens(M)>2$
(see Lemma~\ref{lem:bounding_radius}). We use notation
$H^p(X)$
for the 
$p$-dimensional
Hausdorff measure of
$X$.
Recall that
$H^p(X)=\lim_{t\to 0}H_t^p(X)$,
where
$$H_t^p(X)=\inf\sum_{\diam F\le t}(\diam F)^p$$
and the infimum is taken over all covers of
$X$
by nonempty closed subsets of diameter
$\le t$.

\begin{thm}\label{thm:hausdorff_measure_above} Given
$p>0$,
we have
$$(2\nu^p)\tr_\om(|ds|^p)\ge H^p(X)$$
(in the case
$\nu=0$, $H^p(X)>0$
it means that 
$\tr_\om(|ds|^p)=\infty$).
\end{thm}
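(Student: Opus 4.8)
The plan is to sandwich the two sides through the eigenvalue counting function $N(t):=\dim E_t$, the number of eigenvalues of $|ds|$ (counted with multiplicity) that are $\ge t$. First I would use the geometry of the net $Y_t$ to bound $H^p(X)$ from above by $N(t)$, and then convert this into a lower bound for the Dixmier trace $\tr_\om(|ds|^p)$. For the geometric step, since $Y_t$ is a $\de(t)$-net in $X$, the balls $D_{\de(t)}(y)$, $y\in Y_t$, cover $X$ and each has diameter $\le 2\de(t)$. Using this cover in the definition of Hausdorff measure gives $H^p_{2\de(t)}(X)\le |Y_t|\,(2\de(t))^p$. Because the incidence graph $\Ga_t$ has no isolated vertices, each vertex is an endpoint of some edge of $B_t$, so $|Y_t|\le|B_t|\le\dim E_t=N(t)$; rearranging yields $N(t)\ge H^p_{2\de(t)}(X)/(2\de(t))^p$.

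Next I would let $t\to 0$. From $\nu=\limsup_{t\to0}\de(t)/t$ we have $\de(t)\le(\nu+\ep)t$ for all small $t$, while $H^p_{2\de(t)}(X)\to H^p(X)$ since $\de(t)\to 0$. Hence $t^pN(t)\ge H^p_{2\de(t)}(X)/(2(\nu+\ep))^p$, and taking $\liminf$ and then $\ep\to 0$ gives
$$\liminf_{t\to0}\,t^pN(t)\ \ge\ \frac{H^p(X)}{(2\nu)^p}.$$
In the degenerate cases $\nu=0,\ H^p(X)>0$ or $H^p(X)=\infty$ the same inequalities force $\liminf_{t\to0}t^pN(t)=\infty$, which matches the parenthetical assertion.

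The decisive step is to pass from this pointwise bound on the counting function to a bound on the Dixmier trace. Suppose $N(t)\ge c\,t^{-p}$ for all small $t$. Ordering the eigenvalues $\mu_0\ge\mu_1\ge\cdots$ of $|ds|$ and writing $\si_N=\sum_{n<N}\mu_n^p$, the layer-cake identity $\si_N=\int_0^\infty\min\bigl(N,N(s^{1/p})\bigr)\,ds$, restricted to small $s$ and combined with $N(s^{1/p})\ge c/s$, gives after splitting the integral at $s=c/N$ the estimate $\si_N\ge c\log N+O(1)$. Thus $\si_N/\log N\ge c+o(1)$, and since $\tr_\om(|ds|^p)$ is a generalized limit of $\si_N/\log N$ it lies between the $\liminf$ and $\limsup$ of that sequence; hence $\tr_\om(|ds|^p)\ge c$. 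Letting $c\nearrow\liminf_{t\to0}t^pN(t)$ yields $\tr_\om(|ds|^p)\ge\liminf_{t\to0}t^pN(t)$, which combined with the previous step gives $(2\nu)^p\,\tr_\om(|ds|^p)\ge H^p(X)$.

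The hard part is this last conversion: the Dixmier trace is a logarithmically averaged, $\om$-dependent functional, so a lower bound on $N(t)$ valid for \emph{every} small $t$ must be shown to survive the averaging — it is precisely the logarithmic growth $\si_N\sim c\log N$ produced by the layer-cake computation that makes the bound robust against the choice of $\om$. A secondary technical point is the careful bookkeeping of multiplicities in the identification $N(t)=\dim E_t$ and in the passage between the counting function $N$ and the ordered eigenvalue sequence $\{\mu_n\}$, which must be handled so that the boundary ties (eigenvalues exactly equal to $t$) do not affect the asymptotics.
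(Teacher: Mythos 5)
Your proof is correct, but it takes a genuinely different route from the paper's in the analytic half. The geometric step is identical: both you and the paper cover $X$ by the balls $D_{\de(t)}(y)$, $y\in Y_t$, obtain $|Y_t|\ge H^p_{2\de(t)}(X)/(2\de(t))^p$, and feed in $\de(t)\le(\nu+\ep)t$. Where you diverge is in the passage to the Dixmier trace. The paper first assumes the marked spectrum is simple (and $m\equiv 1$), so that the ordered eigenvalue index can be identified with $|B_t|$, reads off $\mu_k^p\ge\const/k$, and gets the trace bound; it then removes the simplicity assumption by a separate approximation argument: perturb $ds$ by simple regular deformations (Lemma~\ref{lem:approximation_simple_regular}), use Proposition~\ref{pro:uniform_lipschits} to get Lipschitz convergence of the Connes metrics, hence $H^p((X,d_\rho))\to H^p(X)$, continuity of $\tr_\om$ under uniform convergence of deformations, and the estimate $\limsup_\rho\nu_\rho\le\nu$. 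Your counting-function argument makes that entire third step unnecessary: the chain $|Y_t|\le|B_t|\le\dim E_t=N(t)$ is insensitive to multiplicities and boundary ties, and your layer-cake computation $\si_N\ge c\log N+O(1)$ for any $c<\liminf_{t\to 0}t^pN(t)$, combined with the standard fact that $\tr_\om$ of a positive operator lies between $\liminf$ and $\limsup$ of $\si_N/\log N$ for every admissible $\om$, yields the bound directly in full generality (with the convention $\tr_\om(|ds|^p)=\infty$ when $\si_N/\log N$ is unbounded, which also covers the parenthetical case $\nu=0$, $H^p(X)>0$). So your argument is shorter and more self-contained; what the paper's longer route buys is the auxiliary stability statements established along the way ($\nu_\rho\to\nu$, continuity of $H^p$ and of $\tr_\om$ under these deformations), which are of independent interest elsewhere in the paper, while your route leans on the $\liminf$/$\limsup$ squeezing property of the Dixmier trace --- a property the paper itself invokes implicitly (``recalling the definition of the Dixmier trace'') in its simple-spectrum case.
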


\begin{proof} We can assume that
$H^p(X)>0$
since otherwise there is nothing to prove.

(1) The set 
$Y_t$
is a 
$\de(t)$-net
in
$X$.
Thus
$$\sum_{y\in Y_t}\left(\diam D_{\de(t)}(y)\right)^p\ge H_{2\de(t)}^p(X).$$
On the other hand,
$$\sum_{y\in Y_t}\left(\diam D_{\de(t)}(y)\right)^p\le (2\de(t))^p|Y_t|,$$
whence
$$|Y_t|\ge\frac{H_{2\de(t)}^p(X)}{(2\de(t))^p}.$$ 
Fix 
$\ep>0$.
Then
$\de(t)/t\le\nu+\ep$
for all sufficiently small
$t>0$.
Furthermore, we can assume that
$H_{2\de(t)}^p\ge H^p(X)-\ep$
(if
$H^p(X)=\infty$,
then
$H_{2\de(t)}^p(X)\ge 1/\ep$).
For such
$t$
we have
$$|Y_t|\ge\frac{H^p(X)-\ep}{(2(\nu+\ep))^pt^p}.$$

(2) We consider first the case when the marked spectrum of
$ds$
is simple (see sect.~\ref{subsect:pertubations_degenerations}).
One can also assume that the multiplicity function
$m:B\to\N$
is constant and equals one, since otherwise the required inequality
becomes only stronger. Therefore, the multiplicity of every eigenvalue of
$|ds|$
equals 2. Let
$\mu=\{\mu_1=\mu_2>\mu_3=\mu_4>\dots\}$
be the sequence of the eigenvalues of
$|ds|$
written in nonincreasing order according to multiplicity. We put
$\mu_{k-1}=\mu_k=t$.
Then for all sufficiently large
$k$,
say
$k\ge k_0$,
we have due to (1)
$$k=|B_t|\ge|Y_t|\ge\frac{H^p(X)-\ep}{(2(\nu+\ep))^pt^p}.$$
Hence
$$\mu_{k-1}^p=\mu_k^p\ge\frac{H^p(X)-\ep}{(2(\nu+\ep))^pk}.$$
Recalling the definition of the Dixmier trace, we immediately obtain
$$\tr_\om(|ds|^p)\ge\frac{1}{(2\nu)^p}H^p(X),$$
and in the case of simple spectrum the theorem is proved.

(3) Now, we consider the general case. By Lemma~\ref{lem:approximation_simple_regular},
the operator
$ds$
can be approximated in the uniform topology by simple regular deformations.
For every such a deformation
$\rho$
we have already proved that
$$(2\nu)^p\tr_\om(|ds|^p)\ge H^p((X,d_\rho)),$$
where
$d_\rho$
is the Connes metric on
$X$
associated with the deformation. By Proposition~\ref{pro:uniform_lipschits},
these Connes metrics approximate the initial metric of
$X$
in the Lipschitz metric. Thus
$H^p((X,d_\rho))\to H^p(X)$.
On the other hand, the convergence
$\rho\to\mu$
in the uniform topology, obviously, implies that
$\tr_\om(|ds_\rho|^p)\to\tr_\om(|ds|^p)$.
Thus to complete the proof, it suffices to show that 
$\limsup_{\rho\to\mu}\nu_\rho\le\nu$.

Fix 
$\ep>0$.
Then for all deformations
$\rho$
sufficiently close to
$\mu$
we have
$1-\ep\le|xx'|_\rho/|xx'|\le 1+\ep$
for each
$x$, $x'\in X$.
Thus
$B_t\sub B_{\rho,(1-\ep)t}$
and
$Y_t\sub Y_{\rho,(1-\ep)t}$,
where
$B_{\rho,t}=\set{b\in B}{$|b|_\rho\ge t$}$
and
$Y_{\rho,t}$
is the projection of
$B_{\rho,t}\sub X^2$
on the factor
$X$.
Then for the bounding radius we have
$$\de(t)=\de(Y_t)\ge\de(Y_{\rho,(1-\ep)t})\ge(1+\ep)^{-1}\de_\rho(Y_{\rho,(1-\ep)t})
        =\de_\rho((1-\ep)t)/(1+\ep).$$
Consequently
$$\nu=\limsup_{t\to 0}\de(t)/t\ge\nu_\rho(1-\ep)/(1+\ep),$$
which implies the required inequality. Note that similar argument shows that
actually
$\nu_\rho\to\nu$.
\end{proof}

\begin{rem}\label{rem:relative_bounding_factor} Appearence of the factor
$\nu$
in the inequality above, which is closely related to the density of
spectral geometry, is not incidental. There is a similar factor in the 
Connes-Sullivan formula \cite[IV, Theorem~17]{Co} and other formulae from
\cite{Co}.
\end{rem}

Here we consider this factor in more details. If the density of a spectral 
geometry
$D=\dens M>2$,
then by Lemma~\ref{lem:bounding_radius},
$\nu\le D/(D-2)$. 
One can give more precise estimate for sufficiently ``regular'' spaces.

A metric space 
$X$
is called
{\em $R$-uniformly perfect,}
$R\ge 1$,
if there is
$\ep>0$
such that for every
$x\in X$
the set 
$D_r(x)\sm D_{r/R}(x)$
is not empty for all
$0<r\le\ep$.

The following lemma is a version of Lemma~\ref{lem:bounding_radius}.

\begin{lem}\label{lem:loc_bound_radius_estimate} Assume that a metric space
$X$ 
is
$R$-uniformly perfect, $R\ge 1$,
a subset
$B\sub X^2\sm\De$
is symmetric and has local density
$\dens_{\loc}(B)=D>2$.
Given
$t>0$
let
$Y_t\sub X$
be the projection on
$X$
of
$B_t=\set{(y,y')\in B}{$|yy'|\ge t$}$, $\de(t)$
the bounding radius of
$Y_t$.
Then
$$\nu(B)=\limsup_{t\to 0}\de(t)/t\le R/(D-2).$$
In particular, 
$\nu(B)=0$,
if
$\dens_{\loc}(B)=\infty$. 
\end{lem}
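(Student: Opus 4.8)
The plan is to refine the proof of Lemma~\ref{lem:bounding_radius}, but to run the argument in the \emph{forward} direction and to feed in the $R$-uniform perfectness so as to manufacture the auxiliary point at the correct scale. First I would record the local approximation property produced by the hypothesis $\dens_{\loc}(B)=D>2$. Fix $\ep\in(0,D-2)$ and set $\al=1/(D-\ep)<1/2$. Since $\dens(B,U_s)$ does not increase in $s$ and tends to $D$ as $s\to 0$, there is $t_1>0$ with $\dens(B,U_s)>D-\ep$ for all $s\le t_1$; hence for every $z=(x,x')\in X^2\sm\De$ with $|z|=|xx'|\le t_1$ there is $b=(y,y')\in B$ with $\max\{|xy|,|x'y'|\}=|zb|<\al|xx'|$.

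For the core estimate I would fix an arbitrary $x\in X$ and a scale $r\le r_0:=\min\{t_1,\ep_0\}$, where $\ep_0$ is the uniform perfectness constant. By $R$-uniform perfectness choose $x'$ with $r/R<|xx'|\le r$, and apply the local approximation to the pair $(x,x')$ to obtain $(y,y')\in B$ with $|xy|,|x'y'|<\al|xx'|$. The triangle inequality then gives $|yy'|\ge(1-2\al)|xx'|>(1-2\al)r/R=:t$, so $(y,y')\in B_t$ and $y\in Y_t$, while $|xy|\le\al|xx'|\le\al r$. Because the value $t=(1-2\al)r/R$ depends only on $r$ and not on $x$, letting $x$ range over all of $X$ yields $\de(t)\le\al r$. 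Since every sufficiently small $t$ arises in the form $(1-2\al)r/R$ for $r=Rt/(1-2\al)\le r_0$, and $\al/(1-2\al)=1/(D-2-\ep)$, I obtain $\de(t)/t\le R/(D-2-\ep)$ for all small $t>0$, whence $\nu(B)\le R/(D-2-\ep)$. Letting $\ep\to 0$ proves the claim, and the case $D=\infty$ follows by applying the bound with arbitrarily large finite $D'$, giving $\nu(B)\le R/(D'-2)\to 0$.

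The hard part is precisely the choice of the auxiliary point $x'$. In Lemma~\ref{lem:bounding_radius} the point $x'$ was handed to us as the nearest net point, with $|xx'|=\de(t)$ exactly; here the density hypothesis supplies good approximations only for pairs near the diagonal, so we must produce $x'$ ourselves. $R$-uniform perfectness is exactly the tool that guarantees a point at a prescribed scale up to the multiplicative error $R$, and this error is what replaces the factor $D$ of Lemma~\ref{lem:bounding_radius} by the factor $R$ here. Two routine checks deserve attention: that a single $t=(1-2\al)r/R$ serves simultaneously for all $x$ (it does, since the lower bound $|yy'|>(1-2\al)r/R$ involves only $r$), and that shrinking $t_1$ is harmless because only an asymptotic ($\limsup$) bound is asserted. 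No circularity arises, since the estimate $\de(t)\le\al r$ is established directly rather than via a prior decay statement for $\de(t)$.
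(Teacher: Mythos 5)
Your proof is correct, and it uses exactly the two ingredients of the paper's own proof of Lemma~\ref{lem:loc_bound_radius_estimate} --- the approximation of pairs near the diagonal by pairs of $B$ supplied by $\dens_{\loc}(B)>D-\ep$, and $R$-uniform perfectness to manufacture the partner point $x'$ at a prescribed scale --- but you organize them in the opposite direction. The paper argues by contradiction: assuming $\nu(B)>R/(D-2)$ it fixes a small $t$ with $\de(t)/t\ge R/(D-2-\ep)$, picks an extremal point $x$ with $|xY_t|=\de(t)$, uses perfectness to find $x'$ with $(D-\ep)t/(D-2-\ep)\le|xx'|<(D-\ep)\de(t)$, and approximates $(x,x')$ by $(y,y')\in B$ to get $|yy'|\ge t$ yet $|xy|<\de(t)$, contradicting the choice of $x$. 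You instead run the estimate forward: for \emph{every} $x$ and every admissible scale $r\le\min\{t_1,\ep_0\}$ you produce $y\in Y_t$ with $t=(1-2\al)r/R$ and $|xy|\le\al r$, so that the covering bound $\de(t)\le\al r$, i.e.\ $\de(t)/t\le R/(D-2-\ep)$, holds for all small $t$ at once. The constants agree, since $\al/(1-2\al)=1/(D-2-\ep)$, and your reduction of the case $D=\infty$ to large finite $D'$ is fine. Your version buys two genuine improvements in hygiene: it needs no extremal-point selection (hence no compactness at that step, consonant with the lemma's hypothesis of a mere metric space), and it makes explicit where the \emph{local} density may legitimately be invoked, namely only for pairs with $|xx'|\le t_1$ --- a point the paper's step~(2) states for all pairs, which strictly speaking presupposes $\de(t)\to 0$ so that the pairs being approximated lie near the diagonal. (One cosmetic remark on the paper's text: the displayed $|yy'|\ge(1-\al)|xx'|$ there should read $(1-2\al)|xx'|$, which is what the triangle inequality gives and what makes $(1-2\al)|xx'|\ge t$ exact; your bookkeeping uses the corrected constant throughout.)
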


\begin{proof} We can assume that
$D<\infty$.
Suppose that
$\nu(B)>R/(D-2)$
and fix 
$\ep\in (0,D-2-R/\nu)$.
Then
$\nu>R/(D-2-\ep)$,
and there is an arbitrarily small
$t>0$
with

\begin{itemize}
 \item [(1)] $\de(t)/t\ge R/(D-2-\ep)$;
 \item [(2)] for every pair
$(x,x')\in X^2\sm\De$
there is a pair
$(y,y')\in B$
with
$|xy|$, $|x'y'|\le\al|xx'|$,
where
$\al=1/(D-\ep)<1/2$.
\end{itemize}

Then for 
$r=(D-\ep)\de(t)$
and
$r'=(D-\ep)t/(D-2-\ep)$
we have
$r/r'\ge R$
by (1). Now we chose a point
$x\in X$
with
$|xY_t|=\de(t)$.
Since the space is uniformly perfect, there is point
$x'\in X$
with
$$\frac{(D-\ep)t}{D-2-\ep}\le|xx'|<(D-\ep)\de(t).$$
We approximate the pair
$(x,x')$
according (2) by a pair
$(y,y')\in B$
with
$|xy|$, $|x'y'|\le\al|xx'|$.
Then
$|yy'|\ge(1-\al)|xx'|\ge t$,
thus
$(y,y')\in B_t$
and
$y\in Y_t$.
Furthermore,
$|xy|\le\al|xx'|<\de(t)$,
which is a contradiction with the choice of
$x$.
\end{proof}

The next statement directly follows from Theorem~\ref{thm:hausdorff_measure_above}.

\begin{cor}\label{cor:dim_hausdorff_dim_below} Let
$M$
be a spectral geometry on a compact metric space
$X$,
and assume that 
$\nu(B)<\infty$.
Then
$$\dim M\ge\dim_HX,$$
where
$\dim_HX$
is the Hausdorff dimension of
$X$.
\qed 
\end{cor}

\subsection{Weak $L^p$-topology on bounded deformations}
\label{subsect:weak_Lp_topology_bounded_deformations}

Let
$\{\cA,\cH,ds\}$
be a spectral geometry on a compact metric space
$X$.
For
$p>0$
we denote by
$L^p(B)$
the Banach space of sections
$\xi:B\to\C^m$
satisfying the condition
$$\|\xi\|^p=\sum_{(y,y')\in B}\|\xi(y,y')\|^2<\infty,$$
where
$B\sub X^2\sm\De$
is the support of
$M$, $m:B\to\N$
the multiplicity function (cp. sect.~\ref{sect:axioms_spectral}).
Given a bounded deformation
$\rho\in\Xi$
of
$M$
(see sect.~\ref{sect:bounded_deformations}), the operator
$\cD_\rho=ds_\rho^{-1}$
is defined on a dense subset in
$L^p(B)$.
The {\em weak $L^p$-topology} on the set 
$\Xi$
of bounded deformations is defined as the topology of pointwise 
convergence of operators
$\cD_\rho$, $\rho\in\Xi$,
on arbitrary elements
$\xi\in L^p(B)$.
For 
$p=2$,
this definition coincides with that one given in 
sect.~\ref{subsect:weak_topology_bounded_deform}.

\begin{pro}\label{pro:lip_topology_convergence} Let
$M=\{\cA,\cH,ds\}$
be a spectral geometry on a compact metric space
$X$.
Assume that a sequence of marked spectra
$\{\rho_i\}\sub\Xi$
converges in the weak
$L^p$-topology
to
$\rho_0\in\Xi$.
If
$p>\dim M$,
then the associated Connes metrics
$d_{\rho_i}$
on
$X$
converge in the Lipschitz topology to
$d_{\rho_0}$,
$$|d_{\rho_i}d_{\rho_0}|\to 0\quad\textrm{as}\quad i\to\infty.$$
In particular, if
$\dim M<2$,
then the convergence of marked spectra in the weak topology implies
the convergence of the associated Connes metrics in the Lipschitz topology.
\end{pro}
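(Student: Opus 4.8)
The plan is to show that weak $L^p$-convergence of the marked spectra, with $p>\dim M$, forces the ratios $\rho_i/\rho_0$ to tend to $1$ \emph{uniformly} on the support $B=\supp M$, and then to read off the Lipschitz convergence of the Connes metrics from this uniform control. I would deliberately not route the argument through the uniform topology and Proposition~\ref{pro:uniform_lipschits}: weak $L^p$-convergence does not control the quantity $\sup_{b\in B}|1/\rho_i(b)-1/\rho_0(b)|$ defining the uniform topology, since the ``bad'' edges may escape towards the diagonal. What it does control is the weaker quantity $\sup_{b\in B}|\rho_0(b)/\rho_i(b)-1|$, and this is exactly what is needed.

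\emph{Step 1 (the main point).} With $\cD_\rho=ds_\rho^{-1}$, the operator $T_i:=\cD_{\rho_i}-\cD_{\rho_0}$ acts by $(T_i\xi)(y,y')=(1/\rho_i(y,y')-1/\rho_0(y,y'))\,\xi(y',y)$, so, since the flip $(y,y')\mapsto(y',y)$ preserves fibrewise norms and $\rho_i,\rho_0$ are symmetric,
$$\|T_i\xi\|_p^p=\sum_{(y,y')\in B}\Big|\tfrac{1}{\rho_i(y,y')}-\tfrac{1}{\rho_0(y,y')}\Big|^p\,\|\xi(y,y')\|^p.$$
Weak $L^p$-convergence says precisely that $\|T_i\xi\|_p\to0$ for every $\xi\in L^p(B)$, and I claim it gives $\sup_{b\in B}\rho_0(b)\,|1/\rho_i(b)-1/\rho_0(b)|\to0$. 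Suppose not: there are $\ep>0$, a subsequence $(i_k)$, and edges $b_k\in B$ with $\rho_0(b_k)\,|1/\rho_{i_k}(b_k)-1/\rho_0(b_k)|\ge\ep$; passing to a further subsequence I may assume the $b_k$ are pairwise distinct or all equal. Because $\rho_0$ is a bounded deformation, $\rho_0(b)\le C|b|$, and because $p>\dim M$ the eigenvalue estimate established in the proof of Lemma~\ref{lem:finite_trace} (using $\dim E_t=|B_t|$, $\dim E_t\le t^{-(d+\de)}$ with $d+\de<p$) gives $\sum_{b\in B}|b|^p\,m(b)<\infty$, hence $\sum_{b\in B}\rho_0(b)^p<\infty$. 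Thus the single section $\xi$ supported on $\{\,\ov b_k=(y_k',y_k)\,\}$ with $\|\xi(\ov b_k)\|=\rho_0(b_k)$ lies in $L^p(B)$, while $\|T_{i_k}\xi\|_p^p\ge|1/\rho_{i_k}(b_k)-1/\rho_0(b_k)|^p\rho_0(b_k)^p\ge\ep^p$, contradicting $\|T_{i_k}\xi\|_p\to0$. The essential use of $p>\dim M$ is exactly this: it makes $\{|b|^p\}$ summable, so one \emph{fixed} test section can witness all escaping bad edges simultaneously.

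\emph{Step 2 (from uniform ratios to the Lipschitz estimate).} Put $\eta_i:=\sup_{b\in B}|\rho_0(b)/\rho_i(b)-1|$; Step~1 gives $\eta_i\to0$, and since $\rho_0/\rho_i>0$ this yields $\Lambda_i^{-1}\rho_0\le\rho_i\le\Lambda_i\rho_0$ on $B$ with $\Lambda_i:=\max(1+\eta_i,(1-\eta_i)^{-1})\to1$. Recalling that $|xx'|_\rho=\sup\{|a(x)-a(x')|:|a(y)-a(y')|\le\rho(y,y')\ \forall(y,y')\in B\}$ is monotone in $\rho$ and homogeneous ($d_{c\rho}=c\,d_\rho$, via $a\mapsto ca$), I conclude $\Lambda_i^{-1}d_{\rho_0}\le d_{\rho_i}\le\Lambda_i d_{\rho_0}$, whence $|d_{\rho_i}d_{\rho_0}|_L\le\log\Lambda_i\to0$. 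For the final assertion, when $\dim M<2$ one takes $p=2$, for which the weak $L^2$-topology coincides with the weak topology of sect.~\ref{subsect:weak_topology_bounded_deform}, so weak convergence of marked spectra already gives $|d_{\rho_i}d_{\rho_0}|_L\to0$.

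The hard part is Step~1: converting \emph{pointwise} (weak) convergence of the operators $\cD_{\rho_i}$ into \emph{uniform} smallness of $\rho_0(b)\,|1/\rho_i(b)-1/\rho_0(b)|$ over all of $B$. A bare weak-$*$ argument is insufficient, and the crux is the summability input $\sum_{b\in B}|b|^p<\infty$ for $p>\dim M$, which alone allows a single $\xi\in L^p(B)$ to detect a violation along the escaping edges; the passage from uniform ratio control to the Lipschitz distance of metrics in Step~2 is then soft.
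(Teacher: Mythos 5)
Your proof is correct and takes essentially the same route as the paper: the crux in both is that $p>\dim M$ together with boundedness of the deformations yields $\sum_{b\in B}\rho_0(b)^p<\infty$ (via the eigenvalue estimate behind Lemma~\ref{lem:finite_trace}), so a section with fibre norms $\rho_0(b)$ lies in $L^p(B)$ and weak $L^p$-convergence of $\cD_{\rho_i}$ on it forces $\sup_{b\in B}|\rho_0(b)/\rho_i(b)-1|\to 0$, after which the passage to $|d_{\rho_i}d_{\rho_0}|_L\to 0$ and the $\dim M<2$ remark are soft. The only cosmetic difference is that you argue by contradiction with a test section supported on the escaping bad edges, whereas the paper applies the convergence directly to the single global section $\xi_0$ with $\|\xi_0(b)\|=\rho_0(b)$ on all of $B$.
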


\begin{proof} Using Lemma~\ref{lem:finite_trace} and the fact that
every deformation
$\rho\in\Xi$
is bounded, we obtain
$$\sum_{(y,y')\in B}(\rho(y,y'))^p\le C\sum_{(y,y')\in B}|yy'|^p
  \le C\tr(|ds|^p)<\infty.$$
It is easy to construct a section
$\xi_0:B\to\C^m$
with
$\|\xi_0(y,y')\|=\rho_0(y,y')$
for all
$(y,y')\in B$.
Then
$\xi_0\in L^p(B)$
and, by the condition,
$$\sum_{(y,y')\in B}\left|\frac{1}{\rho_i(y,y')}-\frac{1}{\rho_0(y,y')}\right| 
  \|\xi_0(y,y')\|^p\to 0\quad\textrm{as}\quad i\to\infty.$$
By the choice of
$\xi_0$
this implies that for every
$\ep>0$
we have
$$1-\ep\le\frac{\rho_0(y,y')}{\rho_i(y,y')}\le 1+\ep$$
for all
$(y,y')\in B$
and all sufficiently large
$i$.
It follows that for such
$i$
we have
$$(1-\ep)d_{\rho_i}(x,x')\le d_{\rho_0}(x,x')\le(1+\ep)d_{\rho_i}(x,x')$$
for every
$x$, $x'\in X$.
This means that
$$|d_{\rho_i}d_{\rho_0}|_L\to 0\quad\textrm{as}\quad i\to\infty.$$ 
\end{proof}

\subsection{Estimate from above for the spectral dimension}
\label{subsect:estimate_above_sp_dimension}

The {\em spectral dimension} of a compact metric space 
$X$
is defined as
$$\dim_S(X):=\liminf\set{\dim M}{$\dens_{\loc}(M)\to\infty$},$$
where the lower limit is taken over all spectral geometries on
$X$,
whose local density unboundedly increases. It is easy to see that the 
spectral dimension is a biLipschitz invariant of
$X$.
By Lemma~\ref{lem:loc_bound_radius_estimate}, the relative bounding
radius of
$M$
is finite, if 
$\dens_{\loc}(M)>2$.
Thus
$\dim_SX\ge\dim_HX$
by Corollary~\ref{cor:dim_hausdorff_dim_below}.

For sufficiently homogeneous spaces, the spectral dimension 
coincides with the Hausdorff dimension. A metric space
$X$
is said to be 
{\em $q$-quasi-homogeneous,} 
($q$-q.h. for brevity),
$q>0$, 
if for every
$\de>0$
there exists a
$\de$-net $Y_\de\sub X$
such that the ball
$D_\rho(x)$
contains at most 
$C(\rho/\de)^q$
points of
$Y_\de$
for every
$\rho>0$
and 
$x\in X$,
where the constant
$C>0$
does not depend on
$\de$, $\rho$
and
$x$.

\begin{thm}\label{thm:qqh_space_dimension} Assume that a compact metric space
$X$
is
$q$-q.h.
Then there is a spectral geometry
$M$
on
$X$
with infinite local density and the dimension
$\dim M\le q$.
In particular,
$\dim_SX\le q$.
\end{thm}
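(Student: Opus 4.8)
The plan is to realize the desired geometry through the explicit construction of Proposition~\ref{pro:large_density}, feeding it the nets supplied by the $q$-q.h. hypothesis, and then to read off the dimension from the eigenvalue counting function $|B_t|$ via Proposition~\ref{pro:various_dimensions}. First I would fix a geometric scale $\de_k=2^{-k}$ and take $Y_k=Y_{\de_k}$ to be the $\de_k$-net provided by the $q$-q.h. condition; each $Y_k$ is finite, since $q$-quasi-homogeneity gives $|Y_k|\le C(\diam X/\de_k)^q$. For the cut-off I would choose $\phi$ growing slowly, concretely $\phi(t)=\log(1/t)$ for small $t$ (adjusted to be $\ge 4$); this is admissible because $t\phi(t)=t\log(1/t)\searrow 0$ and $\phi(t)\to\infty$. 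Setting $B=\cup_kB(k)$ as in Proposition~\ref{pro:large_density} with $D=\infty$, that proposition guarantees that $B$ satisfies (a), (b) and has $\dens_{\loc}(B)=\infty$; by Proposition~\ref{pro:spectral_triple_spgeom} the triple $M=M(B)$ with multiplicity $m\equiv 1$ is then a spectral geometry on $X$ with infinite local density.

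The heart of the argument is the estimate of $|B_t|$, which equals $\dim E_t$ because $m\equiv 1$. A pair $(y,y')\in B_t$ lies in some $B(k)$ and satisfies $t\le|yy'|\le\de_{k-1}\phi(\de_{k-1})$; hence only finitely many levels $k\le k_+(t)$ contribute, where $k_+(t)$ is the largest $k$ with $\de_{k-1}\phi(\de_{k-1})\ge t$. For a fixed level $k$ I would bound $|B(k)|$ by counting directly: for each of the at most $C(\diam X/\de_k)^q$ points $y\in Y_k$, the number of $y'\in Y_k$ with $|yy'|\le\de_{k-1}\phi(\de_{k-1})$ is at most $C(\de_{k-1}\phi(\de_{k-1})/\de_k)^q$ by the ball-counting property of the $q$-q.h. net. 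With $\de_k=2^{-k}$ this yields $|B(k)|\le A\,2^{kq}\phi(\de_{k-1})^q$ for a constant $A$, so the factor beyond $2^{kq}$ is only polynomial in $k$.

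Since $2^{kq}\phi(\de_{k-1})^q$ grows geometrically in $k$, the sum $|B_t|=\sum_{k\le k_+(t)}|B(k)\cap B_t|$ is controlled, up to a constant, by its top term $A\,2^{k_+(t)q}\phi(\de_{k_+(t)-1})^q$. It then remains to express $k_+(t)$ in terms of $t$: from $\de_{k-1}\phi(\de_{k-1})\asymp 2^{-k}k$ one gets $2^{k_+(t)}\asymp k_+(t)/t$ and $k_+(t)\asymp\log_2(1/t)$, so that $|B_t|\le C't^{-q}(\log(1/t))^{2q}$. Taking logarithms, $\log|B_t|\le q\log(1/t)+2q\log\log(1/t)+O(1)$, whence $\limsup_{t\to 0}\frac{\log|B_t|}{\log(1/t)}\le q$; by Proposition~\ref{pro:various_dimensions} this limit is exactly $\dim M$, so $\dim M\le q$. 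Finally $\dim_S X\le q$ follows from the definition of the spectral dimension, since $M$ has infinite local density and $\dim M\le q$.

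I expect the main obstacle to be the delicate interplay between the choice of $\phi$ and the counting. On the one hand $\phi$ must diverge in order to force infinite local density; on the other hand it must diverge slowly enough that the annulus-counting factor $\phi(\de_{k-1})^q$ stays subexponential in $k$ and does not inflate the limsup above $q$. A logarithmic $\phi$ is in the safe regime, whereas a faster growth such as $\phi(t)=t^{-s}$ would make $k_+(t)\asymp\log_2(1/t)/(1-s)$ and push the bound to $|B_t|\asymp t^{-q(1+s)/(1-s)}$, giving $\dim M=q(1+s)/(1-s)>q$; so verifying that the chosen $\phi$ keeps both requirements simultaneously satisfied is the crux of the estimate.
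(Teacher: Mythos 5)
Your proposal is correct and follows essentially the same route as the paper: the paper's Proposition~\ref{pro:qqh_estimate_above} builds $B=\cup_k B(k)$ from the $q$-q.h.\ nets with cut-off $R_k=(k-1)\de_{k-1}$, which is precisely your logarithmic choice $\phi(t)\asymp\log_2(1/t)$ in Proposition~\ref{pro:large_density}, and it bounds $|B_t|$ by the same per-level ball count $|Y_k|(R_k/\de_k)^q$ before invoking Proposition~\ref{pro:various_dimensions}. Your slightly sharper bound $(\log 1/t)^{2q}$ versus the paper's $(\log 1/t)^{2q+1}$ is an inessential difference, since any subpolynomial factor is killed by the $\limsup$.
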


The proof is based on the following proposition.

\begin{pro}\label{pro:qqh_estimate_above} Assume that a compact metric space
$X$
is
$q$-q.h.
Then there is a symmetric subset
$B\sub X^2\sm\De$
having the infinite local density such that the set 
$B_t=\set{b\in B}{$|b|\ge t$}$
contains at most 
$C(\log 1/t)^{2q+1}/t^q$
elements for every (sufficiently small)
$t>0$,
where the constant
$C>0$
does not depend on
$t$.
\end{pro}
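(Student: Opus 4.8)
The plan is to construct $B$ directly from the quasi-homogeneity data by gluing together finitely many nets at geometrically decreasing scales, in the spirit of Proposition~\ref{pro:large_density}, but now keeping careful track of cardinalities so that the count $|B_t|$ comes out polynomial in $1/t$ with a logarithmic correction. Fix a geometric sequence of scales $\de_k=2^{-k}$ (or $\de_k\searrow 0$ with $\de_k/\de_{k+1}$ bounded), and for each $k$ let $Y_k=Y_{\de_k}$ be the $\de_k$-net supplied by the $q$-q.h.\ hypothesis, so that every ball $D_\rho(x)$ meets $Y_k$ in at most $C(\rho/\de_k)^q$ points. I would then set
$$
B(k)=\set{(y,y')\in Y_k^2\sm\De}{$|yy'|\le\de_{k-1}\phi(\de_{k-1})$},\qquad B=\bigcup_k B(k),
$$
with $\phi$ a slowly growing weight ($\phi(t)\to\infty$, $t\phi(t)\searrow0$, $\phi\ge 4$) exactly as in Proposition~\ref{pro:large_density}. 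That proposition already shows, verbatim, that such a $B$ is symmetric, satisfies (a),(b), and has $\dens_{\loc}(B)=\infty$; so the symmetry, the axioms of the main construction, and the infinite local density are free, and only the cardinality bound on $B_t$ needs new work.

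For the counting step I would estimate $|B_t|$ level by level. First, since $|yy'|\ge t$ forces $(y,y')\in B(k)$ with $t\le\de_{k-1}\phi(\de_{k-1})$, only finitely many levels $k\le k(t)$ contribute, and $k(t)\le C'\log(1/t)$ because $\de_{k-1}\phi(\de_{k-1})\to 0$ at a rate controlled geometrically in $k$. For a single level $k$, a pair $(y,y')\in B(k)\cap B_t$ satisfies $t\le|yy'|\le\de_{k-1}\phi(\de_{k-1})$, so $y'$ lies in the annulus $D_{\de_{k-1}\phi(\de_{k-1})}(y)\sm D_t(y)$. By $q$-quasi-homogeneity the ball of radius $\de_{k-1}\phi(\de_{k-1})$ about $y$ contains at most $C(\de_{k-1}\phi(\de_{k-1})/\de_k)^q$ points of $Y_k$, which is $\le C''(\phi(\de_{k-1}))^q$ since $\de_{k-1}/\de_k$ is bounded. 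Summing over the at most $C(\de_k^{-q})$-many choices of $y\in Y_k$ inside $X$ (again from the $q$-q.h.\ bound applied to $X$ itself, using compactness and $\diam X<\infty$) gives
$$
|B(k)\cap B_t|\le C\,\de_k^{-q}\,(\phi(\de_{k-1}))^q.
$$
Because $|yy'|\ge t$ on $B_t$, the relevant levels have $\de_k\ge c\,t/\phi$, so $\de_k^{-q}\le C t^{-q}(\phi(\de_{k-1}))^q$; choosing $\phi(t)=(\log 1/t)$ (legal, since then $t\phi(t)\searrow0$ and $\phi\ge 4$ for small $t$) makes $(\phi(\de_{k-1}))^q\le C(\log 1/t)^q$ on every contributing level. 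Multiplying the per-level bound $C t^{-q}(\log 1/t)^{q}\cdot(\log1/t)^q=Ct^{-q}(\log1/t)^{2q}$ by the number $k(t)\le C\log(1/t)$ of levels yields
$$
|B_t|=\sum_{k\le k(t)}|B(k)\cap B_t|\le C\,\frac{(\log 1/t)^{2q+1}}{t^{q}},
$$
which is exactly the asserted bound.

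The main obstacle I anticipate is the bookkeeping that turns a per-level annulus count into the clean global exponent $2q+1$: one power of $\log$ comes from the number of active scales $k(t)$, and the remaining $(\log1/t)^{2q}$ must be extracted by correctly coupling the weight $\phi$ both to the annulus radius (upper end $\de_{k-1}\phi(\de_{k-1})$) and to the conversion $\de_k\gtrsim t/\phi$ at the lower end. The delicate point is that naively one might pick up a factor $(\phi)^q$ twice—once from the annulus diameter relative to $\de_k$ and once from converting $\de_k^{-q}$ into $t^{-q}$—so I must verify that with $\phi(t)=\log1/t$ these two contributions combine to $(\log 1/t)^{2q}$ and not more, and that no level is double-counted (pairs from distinct $Y_k$ are genuinely distinct, though a pair could in principle lie in several $B(k)$; bounding $|B_t|\le\sum_k|B(k)\cap B_t|$ only over-counts, which is harmless for an upper bound). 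Once the weight is fixed and the geometric decay of $\de_k$ is used to control $k(t)$, the estimate is a direct summation.
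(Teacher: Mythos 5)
Your proposal is correct and follows essentially the same route as the paper: the paper's proof takes $\de_k=2^{-k}$, the q.h.\ nets $Y_k$ with $|Y_k|\le C\de_k^{-q}$, and $B(k)=\{(y,y')\in Y_k^2\sm\De\,:\,|yy'|\le R_k\}$ with $R_k=(k-1)\de_{k-1}$, which is exactly your choice $\phi(t)=\log(1/t)$ up to a constant, and its count $|B_t|\le\sum_{k\le k_0}|Y_k|\,C(R_k/\de_k)^q\le Ck_0^{q+1}2^{(k_0+1)q}$ with $k_0-1\ge t2^{k_0-1}$ reproduces your level-by-level bookkeeping, including the two factors of $(\log 1/t)^q$ (annulus count and conversion of $\de_k^{-q}$ to $t^{-q}$) plus one $\log 1/t$ from the number of active levels. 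Your appeal to Proposition~\ref{pro:large_density} for symmetry and infinite local density is likewise exactly what the paper does.
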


\begin{proof} We put
$\de_k=2^{-k}$
and for every
$k\ge 5$
consider a 
$\de_k$-net $Y_k=Y_{\de_k}$
from the definition of quasi-homogeneity. Due to that property, we have
$|Y_k|\le C\de_k^{-q}$.
Let
$B=\cup_kB(k)$,
where
$B(k)=\set{(y,y')\in Y_k^2\sm\De}{$|yy'|\le R_k$}$,
$R_k=(k-1)\de_{k-1}$.
The set 
$B$
is, obviously, symmetric and by Proposition~\ref{pro:large_density}
has the infinite local density. Let
$t>0$.
If
$(y,y')\in B_t$,
then
$(y,y')\in B(k)$
with
$R_k\ge|yy'|\ge t$.
Thus
$(k-1)/2^{k-1}\ge t$.
Let
$k_0$
be the maximal integer satisfying this inequality. Then
$$|B_t|\le\sum_{k=5}^{k_0}\sum_{y\in Y_k}|D_{R_k}(y)\cap Y_k|
   \le C\sum_{k=5}^{k_0}|Y_k|(R_k/\de_k)^q\le Ck_0^{q+1}2^{(k_0+1)q}.$$
Using the condition
$k_0-1\ge t2^{k_0-1}$,
we obtain that
$|B_t|\le C'(\log 1/t)^{2q+1}/t^q$,
where the constant
$C'>0$
does not depend on
$t$. 
\end{proof}

\begin{proof}[Proof of Theorem~\ref{thm:qqh_space_dimension}]
We consider the spectral geometry
$M=M(B,1)$
on
$X$,
where the set 
$B$
is constructed in Proposition~\ref{pro:qqh_estimate_above}.
For the subspace
$E_t\sub\cH$
and every
$t>0$, 
we obviously have 
$\dim E_t=|B_t|$.
Computing the dimension of
$M$
with help of Proposition~\ref{pro:various_dimensions} and using
the estimate from Proposition~\ref{pro:qqh_estimate_above},
we obtain
$\dim M\le q$.
\end{proof}

Every compact Riemannian manifold 
$X$
is obviously
$q$-q.h.
with
$q=\dim X$.
Thus using Theorem~\ref{thm:qqh_space_dimension}
and Corollary~\ref{cor:dim_hausdorff_dim_below}, we obtain

\begin{cor}\label{cor:spectral_dim_riemannian} Every compact Riemannian
manifold
$X$
possesses a spectral geometry
$M$
with infinite local density and the dimension
$\dim M=\dim X$.
In particular,
$\dim_SX=\dim X$.
\qed 
\end{cor}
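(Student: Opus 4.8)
The plan is to reduce the corollary to a single geometric fact, namely that a compact Riemannian manifold $X$ of dimension $n$ is $n$-q.h.\ in the sense of the definition preceding Theorem~\ref{thm:qqh_space_dimension}, and then to read off both bounds for $\dim M$ from the results already established. Granting the $n$-q.h.\ property, Theorem~\ref{thm:qqh_space_dimension} (applied with $q=n$) immediately produces a spectral geometry $M$ on $X$ with infinite local density and $\dim M\le n$, and simultaneously yields $\dim_S X\le n$. So the only genuine work is the $n$-q.h.\ verification; the remainder is assembly of the cited statements.

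For the $n$-q.h.\ property I would fix $\de>0$ and take $Y_\de\sub X$ to be a maximal $\de$-separated subset, which by maximality is automatically a $\de$-net. The key point is a uniform packing estimate on $D_\rho(x)\cap Y_\de$ for $\rho\ge\de$ (the only regime in which the condition is subsequently applied, cf.\ Proposition~\ref{pro:qqh_estimate_above}). Since $X$ is compact and smooth, there exist $r_0>0$ and constants $c_1,c_2>0$ with $c_1 r^n\le\operatorname{vol}(D_r(x))\le c_2 r^n$ for all $x\in X$ and $0<r\le r_0$. For $\de\le\rho\le r_0$ the balls $D_{\de/2}(y)$, $y\in Y_\de\cap D_\rho(x)$, are pairwise disjoint and contained in $D_{\rho+\de/2}(x)$, so comparing volumes gives
$$|Y_\de\cap D_\rho(x)|\le\frac{\operatorname{vol}(D_{\rho+\de/2}(x))}{\min_{y}\operatorname{vol}(D_{\de/2}(y))}\le\frac{c_2(\rho+\de/2)^n}{c_1(\de/2)^n}\le C\,(\rho/\de)^n,$$
with $C$ independent of $\de$, $\rho$ and $x$; the remaining bounded range of scales (where $\rho>r_0$ or $\de>r_0$) is controlled by the total volume of $X$ and absorbed into $C$. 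This uniform volume/packing comparison is the main technical step, and I expect it to be the only place where the manifold structure is really used.

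Finally I would assemble the lower bound and conclude. A compact Riemannian manifold is uniformly perfect (each component contains points at all small distances from a given point), and $M$ has infinite local density, so Lemma~\ref{lem:loc_bound_radius_estimate} gives $\nu(B)=0<\infty$. Corollary~\ref{cor:dim_hausdorff_dim_below} then yields $\dim M\ge\dim_H X$, and since the Hausdorff dimension of an $n$-dimensional Riemannian manifold equals $n$, we get $\dim M\ge n$; combined with $\dim M\le n$ this gives $\dim M=n=\dim X$. For the spectral dimension, Theorem~\ref{thm:qqh_space_dimension} supplies $\dim_S X\le n$, while the inequality $\dim_S X\ge\dim_H X=n$ was recorded at the start of sect.~\ref{subsect:estimate_above_sp_dimension} (again via Lemma~\ref{lem:loc_bound_radius_estimate} and Corollary~\ref{cor:dim_hausdorff_dim_below}). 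Hence $\dim_S X=n=\dim X$, completing the proof.
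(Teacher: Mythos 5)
Your proof is correct and takes essentially the same route as the paper, whose entire argument is the remark that a compact Riemannian manifold is ``obviously'' $q$-q.h.\ with $q=\dim X$, followed by citations of Theorem~\ref{thm:qqh_space_dimension} (upper bound and infinite local density) and Corollary~\ref{cor:dim_hausdorff_dim_below} (lower bound via $\dim_HX=\dim X$); your volume-comparison packing estimate simply supplies the detail the paper leaves implicit, and your restriction to $\rho\ge\de$ is the sensible reading of the $q$-q.h.\ condition, matching the only regime in which it is used in Proposition~\ref{pro:qqh_estimate_above}. The lower-bound chain you assemble (uniform perfectness of $X$, Lemma~\ref{lem:loc_bound_radius_estimate} giving $\nu(B)=0$, then Corollary~\ref{cor:dim_hausdorff_dim_below} and the inequality $\dim_SX\ge\dim_HX$ recorded in sect.~\ref{subsect:estimate_above_sp_dimension}) is likewise exactly the paper's intended argument.
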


We give another example of compact Hausdorff space, whose
spectral dimension coincides with the Hausdorff one.

Given
$n\in\N$
and
$p>1$,
the set 
$\cP_{n,p}=\{0,\dots,n\}^{\N}$
with metric
$$|xx'|=\frac{1}{p^k},$$
where
$k=\min\set{N}{$x({N+1})\neq (x'(N+1)$}$,
is a compact Cantor space of diameter
$\diam\cP_{n,p}=1$.

\begin{lem}\label{lem:qqh_cantor} The compact space
$\cP_{n,p}$
is
$q$-quasi-homogeneous
with
$q=\log_p(n+1)$.
\end{lem}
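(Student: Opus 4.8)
The plan is to exploit the ultrametric structure of $\cP_{n,p}$ and to take each net $Y_\de$ to consist of one point chosen from every cylinder of an appropriate depth. We may assume $n\ge 1$, since $n=0$ gives a one-point space with $q=\log_p 1=0$. For $k\ge 0$ call the subsets obtained by fixing the first $k$ coordinates the \emph{cylinders of depth $k$}; there are exactly $(n+1)^k$ of them, they partition $\cP_{n,p}$, and from the definition of the metric each has diameter $p^{-k}$. The metric takes only the values $p^{-k}$, $k\ge 0$, and $0$; moreover any ball $D_\rho(x)$ with $p^{-m}\le\rho<p^{-(m-1)}$ equals the depth-$m$ cylinder containing $x$, so balls and cylinders coincide. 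Two points in distinct cylinders of depth $k$ already differ within their first $k$ coordinates, hence lie at distance $\ge p^{-(k-1)}$.

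Given $\de\in(0,1]$ (for $\de\ge 1$ a single point suffices), let $k=k(\de)$ be the least integer with $p^{-k}\le\de$, so $p^{-k}\le\de<p^{-(k-1)}$, and choose $Y_\de$ by picking one point from each cylinder of depth $k$. Since every point lies within $p^{-k}\le\de$ of the representative of its depth-$k$ cylinder, $Y_\de$ is a $\de$-net with $|Y_\de|=(n+1)^k$, and by the preceding observation distinct points of $Y_\de$ are at distance $\ge p^{-(k-1)}>\de$, so $Y_\de$ is $\de$-separated.

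It then remains to count the points of $Y_\de$ in a ball $D_\rho(x)$, using $(n+1)=p^{q}$ with $q=\log_p(n+1)$. The substantive range is $\rho\ge\de$: here $D_\rho(x)$ is the depth-$m$ cylinder with $p^{-m}\le\rho$, and $m\le k$ because $\rho\ge\de\ge p^{-k}$; this cylinder splits into exactly $(n+1)^{k-m}$ cylinders of depth $k$, hence meets $Y_\de$ in exactly $(n+1)^{k-m}$ points. Combining $p^{-m}\le\rho$ with $p^{-k}>\de/p$ (which comes from $\de<p^{-(k-1)}$) reduces the whole lemma to the estimate
$$(n+1)^{k-m}=\bigl(p^{\,k-m}\bigr)^{q}=\Bigl(\frac{p^{-m}}{p^{-k}}\Bigr)^{q}<\Bigl(\frac{p\rho}{\de}\Bigr)^{q}=(n+1)\Bigl(\frac{\rho}{\de}\Bigr)^{q},$$
so the bound holds with $C=n+1$, independent of $\de$, $\rho$ and $x$. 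When $\rho<\de$ the ball $D_\rho(x)$ sits inside a single depth-$k$ cylinder, and since $Y_\de$ is $\de$-separated it then contains at most one net point, so the estimate is only needed for $\rho\ge\de$, where it is established above. I expect the one point requiring care to be the uniform control of the constant: passing from the ball radius $\rho$ to the cylinder depth $m$ loses a factor $p$ through $p^{-k}>\de/p$, and it is precisely this factor that is absorbed into $C=n+1=p^{q}$; everything else is the bookkeeping of exponents forced by the ultrametric.
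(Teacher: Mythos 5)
Your proof is correct and follows essentially the same route as the paper: the paper's net $\cP_{n,p}^N$ of eventually constant sequences is exactly one representative per cylinder of depth $N$, and the paper likewise identifies balls with cylinders and counts $(n+1)^{N-Q}$ sub-cylinders to get the bound $C(\rho/\de)^q$. Your version merely makes explicit the bookkeeping for general $\de$ (the constant $C=n+1=p^q$) and the degenerate case $\rho<\de$, both of which the paper leaves implicit.
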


\begin{proof} For every
$N\in\N$,
the set 
$$\cP_{n,p}^N=\set{x\in\cP_{n,p}}{$x(k)=x(N)\ \textrm{for all}\ k\ge N$}$$
is a 
$\de$-net 
in
$\cP_{n,p}$
with
$\de=p^{-N}$
and contains
$(n+1)^N$
elements. Therefore, for
$Y_\de=\cP_{n,p}^N$  
we have
$|Y_\de|=(n+1)^N=\de^{-q}$.
Given
$\rho>0$
and
$x\in\cP_{n,p}$,
the ball
$B_\rho(x)$
coincides with
$\set{x'\in\cP_{n,p}}{$p^{-Q}\le\rho$}$,
where
$Q=Q(x')$
and
$Q+1$
is the minimal integer
$k$
with
$x'(k)\neq x(k)$.
Thus for the minimal
$Q$
satisfying
$p^{-Q}\le\rho$,
we have 
$$|B_\rho(x)\cap Y_\de|\le(n+1)^{N-Q}\le\de^{-q}p^{-qQ}\le C(\frac{\rho}{\de})^q,$$
where the constant
$C>0$
depends only on
$p$
and
$q$.
\end{proof}

The following statement is well known (at least for 
$p=2$),
and we give its proof for sake of completeness.

\begin{lem}\label{lem:dim_hausdorff_exa_below} 
$\dim_H\cP_{n,p}\ge\log_p(n+1)$.
\end{lem}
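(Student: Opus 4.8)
The plan is to bound the Hausdorff dimension from below by the classical mass distribution principle (Frostman's lemma): it suffices to produce a Borel probability measure $\mu$ on $\cP_{n,p}$ together with a constant $c>0$ so that $\mu(U)\le c\,(\diam U)^q$ for all sufficiently small sets $U$, where $q=\log_p(n+1)$; this forces $H^q(\cP_{n,p})>0$ and hence $\dim_H\cP_{n,p}\ge q$. The natural choice is the Bernoulli measure, the countable product of the uniform probability measures on $\{0,\dots,n\}$, so that each coordinate is independently uniform and $\mu$ is a Borel probability measure on $\cP_{n,p}=\{0,\dots,n\}^{\N}$. I set $q=\log_p(n+1)$, i.e. $n+1=p^q$.

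The decisive computation is the mass of a metric ball, which is transparent because the metric is an ultrametric and balls are cylinders, exactly as recorded in Lemma~\ref{lem:qqh_cantor}. For an integer $N\ge 1$ and a radius $r$ with $p^{-N}\le r<p^{-(N-1)}$, the value $|xx'|=p^{-k}$ lies in $\{p^{-j}\}$, so $|xx'|\le r$ forces $|xx'|\le p^{-N}$, which means precisely that $x$ and $x'$ agree in their first $N$ coordinates. Thus $D_r(x)$ is the cylinder of all sequences agreeing with $x$ on the first $N$ coordinates, of measure $\mu(D_r(x))=(n+1)^{-N}=(p^{-N})^q\le r^q$, using $q>0$ and $p^{-N}\le r$; for $r\ge 1$ the ball is all of $\cP_{n,p}$ and the bound $\mu\le r^q$ is trivial. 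Hence $\mu(D_r(x))\le r^q$ for every $x$ and every $r\in(0,1]$, with constant $c=1$.

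Next I would upgrade this ball estimate to arbitrary subsets. Given a nonempty $U\sub\cP_{n,p}$ with $d:=\diam U\le 1$, picking any $x_0\in U$ gives $U\sub D_d(x_0)$, whence $\mu(U)\le d^q=(\diam U)^q$. Applying this to any countable cover $\{F_j\}$ of $\cP_{n,p}$ by sets of diameter $\le t$ yields $\sum_j(\diam F_j)^q\ge\sum_j\mu(F_j)\ge\mu(\cP_{n,p})=1$. Taking the infimum over such covers gives $H_t^q(\cP_{n,p})\ge 1$, and letting $t\to 0$ gives $H^q(\cP_{n,p})\ge 1>0$, so $\dim_H\cP_{n,p}\ge q=\log_p(n+1)$.

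The only point requiring care is the exact identification of balls with cylinders and the bookkeeping of the index $N$ against the radius $r$; once $\mu(D_r(x))\le r^q$ is established, the mass distribution principle closes the argument at once, and in fact with the optimal constant $1$. No hypotheses on spectral geometries (density, connectedness, local density) enter here: this is a purely metric-measure estimate on the Cantor space $\cP_{n,p}$, complementary to the upper bound $\dim_S\cP_{n,p}\le q$ obtained from quasi-homogeneity via Theorem~\ref{thm:qqh_space_dimension}.
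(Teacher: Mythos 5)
Your proof is correct and follows essentially the same route as the paper: both construct the uniform Bernoulli product measure on $\cP_{n,p}$, show $\mu(U)\le(\diam U)^q$ by enclosing a set in a ball-cylinder of comparable diameter, and conclude via the mass distribution principle that $H^q(\cP_{n,p})\ge 1$. The only cosmetic difference is that the paper exploits the fact that $\diam F$ is itself a power of $p^{-1}$ to get a ball of exactly equal diameter, while you use the standard bracketing $p^{-N}\le r<p^{-(N-1)}$; the substance is identical.
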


\begin{proof} We introduce a probability measure on
$\cP_{n,p}$
defined by the condition that the numbers
$\{0,\dots,n\}$
appear as
$x(k)$
with probability
$1/(n+1)$
for every
$x\in\cP_{n,p}$, $k\in\N$.
In other words, for every
$x\in\cP_{n,p}$, $N\in\N$
we have
$$\mes(B_{p^{-N}}(x))=\frac{1}{(n+1)^N}.$$
Thus for 
$q=\log_p(n+1)$
we have
$\mes(B_{p^{-N}}(x))=(\diam (B_{p^{-N}}(x)))^q=p^{-qN}$.
Given a closed subset
$F\sub\cP_{n,p}$,
there are 
$x\in\cP_{n,p}$, $N\in\N$
such that
$F\sub B_{p^{-N}}(x)$
and 
$\diam F=\diam (B_{p^{-N}}(x))$.

Indeed,
$F$
is compact, thus there exist 
$x$, $x'\in F$
with
$$|xx'|=\diam (F)=p^{-N}$$
for some 
$N\in\N$.
Hence
$F\sub B_{p^{-N}}(x)$
and
$\diam (F)=\diam (B_{p^{-N}}(x))$.

Therefore,
$$\mes(F)\le(\diam(F))^q$$
for every closed
$F\sub\cP_{n,p}$.
Thus given a covering 
$\{F\}$
of
$\cP_{n,p}$
by closed subsets, we have
$$\sum(\diam F)^q\ge\sum\mes(F)\ge\mes(\cP_{n,p})=1.$$
For the 
$q$-dimensional
Hausdorff measure this gives
$H^q(\cP_{n,p})\ge 1$,
and
$\dim_H\cP_{n,p}\ge\log_p(n+1)$.
\end{proof}

From Corollary~\ref{cor:dim_hausdorff_dim_below}, Theorem~\ref{thm:qqh_space_dimension} 
and Lemmas~\ref{lem:qqh_cantor} and \ref{lem:dim_hausdorff_exa_below} we obtain

\begin{cor}\label{cor:spgeometry_P} For every
$n\in\N$, $p>1$,
there exists a spectral geometry
$M$
with infinite local density and the dimension 
$\dim M=\log_p(n+1)=\dim_H\cP_{n,p}$
on the compact metric space 
$\cP_{n,p}$.
In particular,
$\dim_S\cP_{n,p}=\dim_H\cP_{n,p}$.
\qed
\end{cor}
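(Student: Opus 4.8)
The plan is to obtain the claim by squeezing $\dim M$ between $q := \log_p(n+1)$ from both sides, using the four preceding results together with the vanishing of the relative bounding radius of the geometry we construct. First I would set $q = \log_p(n+1)$ and invoke Lemma~\ref{lem:qqh_cantor} to record that $\cP_{n,p}$ is $q$-quasi-homogeneous. Feeding this into Theorem~\ref{thm:qqh_space_dimension} produces a spectral geometry $M = M(B,1)$ on $\cP_{n,p}$ with infinite local density and with $\dim M \le q$; this is the upper bound.

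For the matching lower bound I would apply Corollary~\ref{cor:dim_hausdorff_dim_below}, which gives $\dim M \ge \dim_H\cP_{n,p}$ provided $\nu(B) < \infty$. Combined with Lemma~\ref{lem:dim_hausdorff_exa_below}, which supplies $\dim_H\cP_{n,p} \ge q$, this closes the loop into the chain
$$q \le \dim_H\cP_{n,p} \le \dim M \le q,$$
forcing every inequality to be an equality. In particular $\dim M = \log_p(n+1) = \dim_H\cP_{n,p}$, and as a byproduct the Hausdorff dimension of $\cP_{n,p}$ is pinned to $\log_p(n+1)$ as well.

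The one point requiring care --- the main, though modest, obstacle --- is verifying that Corollary~\ref{cor:dim_hausdorff_dim_below} is applicable, i.e., that $\nu(B) < \infty$ for the support $B$ produced above. Here I would appeal to Lemma~\ref{lem:loc_bound_radius_estimate}: since $B$ has infinite local density, it suffices to know that $\cP_{n,p}$ is $R$-uniformly perfect for some $R$, whereupon $\nu(B) = 0$. This uniform perfectness is easy to check directly: because the nonzero distances in $\cP_{n,p}$ take exactly the values $p^{-k}$, $k \ge 0$, any point $x$ admits, for each $k$, a neighbour at distance exactly $p^{-k}$ (alter a single coordinate), so $D_r(x) \sm D_{r/p}(x) \ne \es$ for every $r \in (0,1]$ and $\cP_{n,p}$ is $p$-uniformly perfect.

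Finally, for the spectral-dimension assertion I would combine the inequality $\dim_S\cP_{n,p} \le q$, which is the second conclusion of Theorem~\ref{thm:qqh_space_dimension}, with the general lower bound $\dim_S X \ge \dim_H X$ noted before that theorem (again valid here since $\cP_{n,p}$ is uniformly perfect and every competing geometry has local density tending to infinity). This sandwiches $\dim_S\cP_{n,p}$ between $\dim_H\cP_{n,p}$ and $q$, both equal to $\log_p(n+1)$, yielding $\dim_S\cP_{n,p} = \dim_H\cP_{n,p}$.
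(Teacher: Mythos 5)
Your proposal is correct and follows essentially the same route as the paper, which obtains the corollary by exactly this combination of Lemma~\ref{lem:qqh_cantor}, Theorem~\ref{thm:qqh_space_dimension}, Corollary~\ref{cor:dim_hausdorff_dim_below} and Lemma~\ref{lem:dim_hausdorff_exa_below} (the paper offers no further argument beyond citing these four results). Your one addition --- verifying via Lemma~\ref{lem:loc_bound_radius_estimate} that $\cP_{n,p}$ is $p$-uniformly perfect, so that $\nu(B)=0<\infty$ and Corollary~\ref{cor:dim_hausdorff_dim_below} (and the bound $\dim_S X\ge\dim_H X$) genuinely apply --- is a hypothesis the paper leaves implicit, and your direct check of it is correct.
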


\subsection{Comparison with other definitions of dimension}
\label{subsect:Comparison_def_dimension}

A number of definitions of dimension suitable for subsets of metric spaces is
known (see, for example \cite{KT}, \cite{Tr}), especially in the literture
on fractal geometry \cite{Fa1}, \cite{Fa2}, which give different results for 
sufficiently pathological spaces. We mention here besides the Hausdorff's also
packing dimension, upper and lower box dimension, the Minkowski-Bouligand
dimension \cite{Bo} (the last one is defined for subsets of Euclidean spaces).
It is well known that the upper box dimension dominates all dimensions from
this list. Thus we compare the spectral dimension
$\dim_S$
only with the upper box dimension
$\ov{\dim}_B$, 
which is defined as follows.

Let
$N_t(X)$
be the least number of elements, which is need to cover
$X$
by sets of diameter
$\le 2t$.
We put 
$$\ov{\dim}_BX=\limsup_{t\to 0}\frac{\log N_t(X)}{\log t^{-1}}.$$
In \cite{KT}, this quantity is called the {\em upper metric dimension} of
$X$.

\begin{thm}\label{thm:spectral_above_upperbox} For each compact metric space
$X$
it holds
$$\dim_SX\ge\ov{\dim}_BX.$$
\end{thm}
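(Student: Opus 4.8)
The plan is to reduce the theorem to the single-geometry statement that \emph{every} spectral geometry $M$ on $X$ with finite relative bounding radius $\nu=\nu(B)$ satisfies $\dim M\ge\ov{\dim}_BX$, in exact analogy with Corollary~\ref{cor:dim_hausdorff_dim_below}. Indeed, the lower limit defining $\dim_SX$ ranges over geometries with $\dens_{\loc}(M)\to\infty$; for all of these with $\dens_{\loc}(M)>2$ the relative bounding radius is finite by Lemma~\ref{lem:loc_bound_radius_estimate}, so once each such geometry is known to satisfy $\dim M\ge\ov{\dim}_BX$, the lower limit is $\ge\ov{\dim}_BX$ as well. Thus it suffices to prove the inequality for one geometry $M$ with $\nu<\infty$.

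First I would record the basic covering estimate. For each $t>0$ the projection $Y_t$ of $B_t$ is a $\de(t)$-net in $X$, so the balls $D_{\de(t)}(y)$, $y\in Y_t$, cover $X$ and have diameter $\le 2\de(t)$; hence $N_{\de(t)}(X)\le|Y_t|$. Since the incidence graph $\Ga_t$ has no isolated vertices and its oriented, symmetric edge set is $B_t$, one gets $|Y_t|\le|B_t|$, and as the multiplicity satisfies $m\ge 1$ we have $|B_t|\le\dim E_t=\sum_{(y,y')\in B_t}m(y,y')$. Combining,
$$N_{\de(t)}(X)\le|Y_t|\le|B_t|\le\dim E_t\quad\textrm{for every}\quad t>0.$$

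Next I would pass to the dimension through a change of variables. Fix $\ep>0$; since $\nu=\limsup_{t\to 0}\de(t)/t<\infty$, we have $\de(t)\le(\nu+\ep)t$ for all small $t$. Given small $s>0$ put $t=s/(\nu+\ep)$, so $\de(t)\le s$; as $N_{\bullet}(X)$ is non-increasing in its argument, the covering estimate yields $N_s(X)\le N_{\de(t)}(X)\le\dim E_t$. Dividing by $\log(1/s)=\log(1/t)-\log(\nu+\ep)$ and letting $s\to 0$ (equivalently $t\to 0$), the fixed additive term $\log(\nu+\ep)$ is negligible against $\log(1/t)\to\infty$, giving
$$\ov{\dim}_BX=\limsup_{s\to 0}\frac{\log N_s(X)}{\log(1/s)}\le\limsup_{t\to 0}\frac{\log\dim E_t}{\log(1/t)-\log(\nu+\ep)}=\limsup_{t\to 0}\frac{\log\dim E_t}{\log(1/t)}=\dim M,$$
where the last equality is Proposition~\ref{pro:various_dimensions}. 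This establishes $\dim M\ge\ov{\dim}_BX$, and the theorem follows.

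The routine parts are the combinatorial bound $|Y_t|\le|B_t|$ and the verification that the bounded constants (the factor relating $|Y_t|$ to $\dim E_t$ and the term $\log(\nu+\ep)$) drop out of the limit, since they contribute $O(1)$ to numerators and to denominators tending to infinity. The genuine obstacle — and the hypothesis that makes the whole comparison legitimate — is the finiteness of $\nu$ for the geometries entering $\dim_SX$, which is precisely what Lemma~\ref{lem:loc_bound_radius_estimate} supplies once $\dens_{\loc}(M)>2$; it is this that licenses the substitution $t\mapsto s=(\nu+\ep)t$ tying the covering scale $s$ to the spectral scale $t$. Were $\nu$ infinite, this link between the two scales would collapse and the estimate could fail.
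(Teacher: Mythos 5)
Your single-geometry lemma is correct and is genuinely simpler than the paper's argument: for a \emph{fixed} geometry with $\nu(B)<\infty$, the chain $N_{\de(t)}(X)\le|Y_t|\le|B_t|\le\dim E_t$, the substitution $t=s/(\nu+\ep)$, and Proposition~\ref{pro:various_dimensions} do yield $\dim M\ge\ov{\dim}_BX$, and the bookkeeping with the additive term $\log(\nu+\ep)$ is sound. The gap is in the reduction step. Lemma~\ref{lem:loc_bound_radius_estimate} has the hypothesis that $X$ is $R$-uniformly perfect, while Theorem~\ref{thm:spectral_above_upperbox} is asserted for an \emph{arbitrary} compact $X$; you silently drop that hypothesis, and it is not removable. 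If $X$ has an isolated point $p$ (take $X=\{0\}\cup\{1/n:\,n\in\N\}$ and $p=1$, for which $\ov{\dim}_BX=1/2>0$), then every pair $(p,x')$ satisfies $|z|\ge\dist(p,X\sm\{p\})>0$, so such pairs lie outside the neighborhood $U_t$ of the diagonal for all small $t$ and the \emph{local} density imposes no constraint on them whatsoever. Hence there are supports $B$ avoiding $p$ entirely with $\dens_{\loc}(B)=\infty$, and for these $\de(t)\ge\dist(p,X\sm\{p\})$ for every $t$, so $\nu(B)=\infty$. Such geometries do enter the lower limit defining $\dim_SX$, and for them your covering chain says nothing ($N_{\de(t)}(X)$ stays bounded), so the liminf is not controlled and the proof does not close. (The paper's own text in sect.~\ref{subsect:estimate_above_sp_dimension}, invoking the lemma without restating perfectness when deriving $\dim_SX\ge\dim_HX$, has the same looseness and may have misled you.)

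The paper's proof of this theorem is structured precisely to avoid $\nu$: it works with maximal $\tau$-separated sets $A_\tau$, explicitly removes the finitely many $t/2$-isolated points (which changes neither $\dim_S$ nor $\ov{\dim}_B$), controls clusters via the constant $\de_0$, and then injects nearest-neighbour pairs $z_a=(a,a')$ with $\tau\le|aa'|\le t$ into $\supp M$ using the local density only at scales $\le t$, obtaining the lower bound $\dim E_{\tau/2}\ge N_\tau(X)/2$ directly --- i.e., it bounds $\dim E_t$ from below by a separated-set count rather than bounding a covering number from above by $|Y_t|$. To salvage your route you would need a replacement for Lemma~\ref{lem:loc_bound_radius_estimate}: for \emph{perfect} $X$ one can show $\de(t)\to0$ as $t\to0$ (otherwise, by compactness, $Y$ would fail to be dense, contradicting positive local density) and then run the argument of Lemma~\ref{lem:bounding_radius} on the pair realizing $\de(t)$, which eventually lies in the local-density regime, giving $\nu\le D/(D-2)$ without uniform perfectness; but the isolated-point contamination would still have to be excised by hand, scale by scale, essentially as the paper does. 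As it stands, your argument proves a clean and useful statement --- $\dim M\ge\ov{\dim}_BX$ whenever $\nu(B)<\infty$, the exact analogue of Corollary~\ref{cor:dim_hausdorff_dim_below} --- but not the theorem.
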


\begin{proof} It is easy to see that
$N_t(X)$
is at most the maximal number of
$t$-separated
points in
$X$ 
and is at least the maximal number of
$2t$-separated
points in
$X$
(see \cite[Theorem IV]{KT}). Thus in the definition of the upper box dimension,
one can take as
$N_t$
the maximal number of 
$t$-separated
points in 
$X$.

We fix 
$\ep>0$
and
$D\ge 4$. 
Since
$$\dim_SX\ge\inf\set{\dim M}{$\dens_{\loc}(M)\ge D+\ep$},$$ 
there exists a spectral geometry
$M$
on
$X$
with
$\dens_{\loc}(M)\ge D+\ep$, 
for which
$\dim_SX+\ep\ge\dim M$.
Using the definition of the local density
$\dens_{\loc}(M)$,
we find 
$t>0$
such that for each
$x$, $x'\in X$
with
$0<|xx'|\le t$
there is
$b=(y,y')\in\supp M$
with
$\max\{|xy|,|x'y'|\}\le|xx'|/D$.

The set of
$t/2$-isolated
points of
$X$
(i.e., separated from any other point by the distance
$>t/2$)
is finite because 
$X$
is compact. Removing this set from
$X$,
we change neither the spectral nor the upper box dimension of
$X$.
Thus in what follows, we assume that
$X$ 
contains no
$t/2$-isolated
point.

Any subset in
$X$
of diameter
$\le t/10$,
separated from its complement by the distance
$>t/2$,
is called a
{\em $t$-claster}.
Obviously, the number of
$t$-clasters 
in
$X$
is finite. Let
$\de_0$
be the minimum of their diameters. Then
$\de_0>0$
since
$X$
contains no
$t/2$-isometry
point.

For each
$\de\in(0,\de_0/2)$
there is
$\tau\in(0,\de)$
with
$$\frac{\log N_\tau(X)}{\log\tau^{-1}}\ge\ov{\dim}_BX-\ep.$$
Let
$A_\tau$
be a subset in
$X$,
consisting of the maximal number of
$\tau$-separated
points,
$|A_\tau|=N_\tau(X)$.
For each point
$a\in A_\tau$,
we associate the pair
$z_a=(a,a')\in X^2\sm\De$,
where
$a'\in A_\tau$
be (some) closest to
$a$
point. Then
$|aa'|\ge\tau$.
We show that
$|aa'|\le t$.

Assume that
$|aa'|>t$.
There is
$x\in X$
different from
$a$
with
$|xa|\le t/2$,
since there is no
$t/2$-isolated
point. Then
$|xa'|>t/2\ge 5\de_0\ge 5\tau$.
Moreover, by the definition of
$\de_0$,
one can choose
$x$
so that
$|xa|\ge\de_0/2\ge\tau$.
However, this contradicts the maximality of
$A_\tau$.
Therefore,
$|aa'|\le t$.

For different
$a$, $a'\in A_\tau$
(unordered) pairs
$z_a$, $z_{a'}$
coincide or
$|z_az_{a}|\ge\tau$.
Hence, we obtained at least
$N_\tau(X)/2$
such pairs. Each pair
$z_a=(a,a')$
can be approximated by
$b_a=(y,y')\in\supp M$
so that
$\max\{|ay|,|a'y'|\}\le|aa'|/D$,
because
$|aa'|\le t$.
Since
$D\ge 4$,
for different pairs such points are different and
$|yy'|\ge(1-2/D)|aa'|\ge\tau/2$.
Thus for the spectral geometry
$M$
we have
$\dim E_{\tau/2}\ge N_\tau(X)/2$
and
$$\frac{\log\dim E_{\tau/2}}{\log(\tau/2)^{-1}}
  \ge(\ov{\dim}_BX-\ep)\frac{1-\log 2/\log N_\tau(X)}{1+\log 2/\log\tau^{-1}}.$$
Taking
$\de\to 0$,
we obtain
$$\dim_SX+\ep\ge\dim M\ge\ov{\dim}_BX-\ep.$$
Now, the required inequality follows since
$\ep$
is arbitrary.
\end{proof}

\begin{rem}\label{rem:more_precise} One can see from the proof that 
the following more precise inequality
$$\ov{\dim}_BX\le\inf\set{\dim M}{$\dens_{\loc}(M)\ge 4$}$$
holds.
\end{rem}

\end{document}